\theoremstyle{plain}
\newtheorem{thm}{\protect\theoremname}
\theoremstyle{plain}
\newtheorem{lem}[thm]{\protect\lemmaname}
\theoremstyle{plain}
\newtheorem{assumption}[thm]{\protect\assumptionname}
\newtheorem{proposition}{Proposition}
\newtheorem{corollary}{Corollary}
\theoremstyle{plain}
\providecommand{\keywords}[1]
{
  \small	
  \textbf{\textit{Keywords---}} #1
}
\providecommand{\assumptionname}{Assumption}
\providecommand{\lemmaname}{Lemma}
\providecommand{\theoremname}{Theorem}
\title{Higher-Order Expansion and Bartlett Correctability of Distributionally Robust Optimization}
\author{Shengyi He$^{1}$, Henry Lam$^{1}$  \\
        \small $^{1}$ Department of Industrial Engineering and Operations Research, Columbia University \\
}
\begin{document}
\maketitle
\begin{abstract}{Distributionally robust optimization (DRO) is a worst-case framework for stochastic optimization under uncertainty that has drawn fast-growing studies in recent years. When the underlying probability distribution is unknown and observed from data, DRO suggests to compute the worst-case distribution within a so-called uncertainty set that captures the involved statistical uncertainty. In particular, DRO with uncertainty set constructed as a statistical divergence neighborhood ball has been shown to provide a tool for constructing valid confidence intervals for nonparametric functionals, and bears a duality with the empirical likelihood (EL). In this paper, we show how adjusting the ball size of such type of DRO can reduce higher-order coverage errors similar to the Bartlett correction. Our correction, which applies to general von Mises differentiable functionals, is more general than the existing EL literature that only focuses on smooth function models or $M$-estimation. Moreover, we demonstrate a higher-order ``self-normalizing'' property of DRO regardless of the choice of divergence. Our approach builds on the development of a higher-order expansion of DRO, which is obtained through an asymptotic analysis on a fixed point equation arising from the Karush-Kuhn-Tucker conditions.}
\end{abstract}

\keywords{Distributionally robust optimization, empirical likelihood, Bartlett correction, higher-order asymptotic, von Mises functionals, nonparametric inference}

\section{Introduction}

We consider the basic problem of confidence interval (CI) estimation for a general statistical functional $\psi(P_0)$, where $P_0$ is an unknown distribution and $\psi$ maps from a probability distribution to a real value. In particular, we are interested in constructing CIs with coverage errors of smaller orders than nominally applying the central limit theorem or the delta method. This classical problem has been addressed via multiple approaches, such as iterated resampling when applying the bootstrap (e.g., \cite{davison1997bootstrap} \S2.4; \cite{efron1982jackknife} \S10.4; \cite{Hall1992} \S3.11), and Bartlett correction when applying the empirical likelihood (EL) (\citealt{owen2001empirical} \S13; \citealt{diciccio1991}). 

% , of order higher than, e.g., $O(n^{-1})$ in the two-sided. 
% Given i.i.d. data, we aim to construct a confidence interval (CI) for $\psi(P_0)$. 

In this paper, we study an alternative approach called \emph{distributionally robust optimization (DRO)} to construct high-accuracy CIs. DRO is a framework to handle stochastic optimization when the true underlying probability distribution is unknown or uncertain. It originates from operations research \citep{scarf1957min} and stochastic control \citep{petersen2000minimax}, then gains substantial applications across disciplines ranging from economics \citep{hansen2008robustness} and finance \citep{ghaoui2003worst,glasserman2014robust} to queueing \citep{jain2010optimality} and revenue management \citep{lim2006model}, and recently exhibits a surge of interest in machine learning due to its risk-averse interpretation \citep{ruszczynski2006optimization,rockafellar2007coherent} and connections to regularization \citep{kuhn2019wasserstein,rahimian2019distributionally}. DRO advocates a worst-case approach to handle optimization under uncertainty, where the uncertain parameter is postulated to lie in a feasible region known as the uncertainty set or ambiguity set that, intuitively speaking, contains the ground truth with high likelihood. It thus often leads to a minimax formulation where the inner optimization computes the worst-case scenario on the parameter value in the uncertainty set. This framework can be viewed as an extension of the classical paradigm of robust optimization \citep{bertsimas2011theory,ben2009robust,xu2009robustness}, where the uncertain parameter in DRO is the underlying probability distribution in a stochastic problem. Throughout this paper, we use the term DRO broadly to refer to a worst-case optimization over any unknown distribution in a feasible region.

% formulation worst-case formulation.

We consider especially DRO whose uncertainty set is a neighborhood ball surrounding some baseline distribution, where the ball size is measured by a statistical $\phi$-divergence. This formulation, known as divergence-based DRO \citep{BenTal2013}, can be written as
\begin{equation}
\begin{array}{ll}
\max/\min_{P} & \psi(P)\\
\text{subject to} & D_{\phi}(P||\hat{P})\leq\eta
\end{array}\label{eq: DRO_formulation}
\end{equation}
where $D_{\phi}(P||Q)=\int\phi\left(\frac{dP}{dQ}\right)dQ$ for a suitable function $\phi:\mathbb R_+\to\mathbb R$, and $\hat{P}$ is the empirical distribution given by $\hat{P}:=\frac{1}{n}\sum_i\delta_{X_i}$ with $\delta_\cdot$ denoting the delta measure on $\cdot$. Here, $\max/\min$ means we consider a pair of worst-case optimization problems, a max and a min, to obtain an upper and lower bound for $\psi(P)$. These bounds can be readily seen to cover $\psi(\hat P)$ by the definition of the optimization and, at least intuitively, they get increasingly closer to $\psi(\hat P)$ as the ball size $\eta$ shrinks to 0, thus hinting a resemblance with the normality CI when $\eta$ is suitably chosen. Indeed, the DRO \eqref{eq: DRO_formulation} has been recently shown to generate asymptotically exact CIs for $\psi(P_0)$ when $\eta$ is well-chosen in terms of the sample size $n$, the $\phi$-function, and the quantile of a $\chi^2$-distribution \citep{BenTal2013,LAM2017301,duchi2016statistics}. Compared to other established statistical approaches to construct CIs, DRO is motivated from its advantageous preservation of convexity. Namely, when $\psi(\cdot)$ depends convexly on a decision variable to be optimized (in addition to the input distribution $P$) and serves as a constraint or objective in a bigger optimization problem, the resulting minimax-type formulation (with outer optimization on the decision variable and inner optimization on $P$) still remains convex and hence computationally tractable \citep{duchi2019variance,lam2019recovering}.

% This approach is asymptotically pivotal in that the choice of $\eta$ does not depend on $\psi$ and $P_0$, and bears a natural duality relation with EL. The study of such DRO to construct CIs,
% Then $(\psi_{\min}(n,\eta),\psi_{\max}(n,\eta))$ provides a CI for $\psi(P_0)$. 
Our main goal in this paper is to study the use of DRO \eqref{eq: DRO_formulation} to obtain CIs with higher-order coverage improvements. Let $\psi_{\min}(n,\eta)$ and $\psi_{\max}(n,\eta)$ denote the optimal values of \eqref{eq: DRO_formulation}. We aim to find a choice of $\eta$ such that 
\begin{equation}\label{eq: small_error}
    P(\psi(P_0)\in[\psi_{\min}(n,\eta),\psi_{\max}(n,\eta)]) = 1-\alpha + O(n^{-3/2})
    \end{equation}
where the error term $O(n^{-3/2})$ is of higher order than $O(n^{-1})$ that is elicited by the standard (nonparametric) delta method. Compared to iterated bootstraps, this approach does not require a multiplicative amount of Monte Carlo runs that can be computationally demanding. It is closely related to the Bartlett correction in EL, the latter based on a simple adjustment on the involved statistic to make its asymptotic distribution closer to $\chi^2$. In \eqref{eq: small_error}, we analytically adjust the ball size in a similar spirit as the Bartlett correction, but our approach is more general in that it applies to general von Mises differentiable functionals, beyond the smooth function model $\psi(P)=f(E_P[Z])$ for some function $f$ and variable $Z$, or $M$-estimation where $\psi(P)$ is defined through an estimating equation $E_P[m(Z,\psi)]=0$, which are used in all the EL literature to our best knowledge. 

% First, we establish that the KKT conditions hold with probability converging to 1, which can be seen as a fixed-point equation for Compared to EL, we think the DRO formulation \eqref{eq: DRO_formulation} provides an easier and more intuitive way to handle general objective functions. More precisely, 1) Compared to EL, we think the DRO formulation \eqref{eq: DRO_formulation} provides an easier and more intuitive way to handle general objective functions. More precisely, 1) this allows a streamlined derivation of a rigorous expansion of the objective function in a streamlined way. 2) From \eqref{eq: DRO_formulation}, we can directly get the shape of the constructed CI. In this regard, DRO is more intuitive than EL which looks at another related statistic. 3) The Bartlett correction for DRO has a more natural connection to the higher-order delta method. Indeed, both of them aim to correct the CI to cancel the higher-order difference in the normal approximation. We think that 
To this end, we briefly explain our main mathematical machinery and how this offers a different, more general, analysis route to obtain correction than EL. The ball size adjustment to achieve \eqref{eq: small_error} is obtained via a high-order asymptotic expansion on the optimal values of DRO, $\psi_{\min}(n,\eta)$ and $\psi_{\max}(n,\eta)$, in terms of $\eta$, against which an Edgeworth expansion is compared to calibrate $\eta$. Through the Karush-Kuhn-Tucker (KKT) conditions, the decision variable (i.e., $P$) and the Lagrange multipliers in the constrained problems \eqref{eq: DRO_formulation} can be expressed in terms of a fixed point equation of a suitable iteration scheme whose asymptotic can be analyzed. In contrast, existing Bartlett correction derivations for EL are all based on mean estimation, for which the optimal distribution (or probability weights) in attaining the profile likelihood can be written in closed form as a function of the Lagrange multipliers. For instance, in the smooth function model, a transformation argument is used to translate the correction formula for the mean to a function of mean (e.g., in the seminal work \cite{diciccio1988}, Section 3, steps 1-3 consider EL for the mean and, based on this, steps 4-10 consider Bartlett correction for the smooth function model). Likewise, in $M$-estimation, since the estimating equation is linear in $P$, the optimal solution can also be solved in closed form like in mean estimation \citep{zhang1996accuracy}. However, when $\psi(P)$ is a general von Mises functional, such a route is no longer clear. In this regard, the DRO analysis appears much more direct than a potential analysis based on EL, since in this route the constraint in \eqref{eq: DRO_formulation} ensures that each feasible solution must be close to $\hat{P}$, which allows one to approximate $\psi(P)$ in terms of influence functions for any feasible $P$. 

In addition to developing a new analysis route to obain higher-order coverage improvements, we also present several implications from our DRO investigation. First is what we call a \emph{higher-order self-normalizing} property of DRO: To reduce the coverage error of a CI to $O(n^{-3/2})$ using DRO \eqref{eq: DRO_formulation}, we only need to estimate the influence function of $\psi$ up to the second order. On the other hand, if we attempt to correct for the delta method directly, then the same order of reduction would require the estimation of the third-order influence function. This can be viewed as a higher-order counterpart of the self-normalizing behavior of EL \citep{diciccio1991} or DRO \citep{duchi2016statistics} where no (estimated) information of $\psi$ is needed to generate asymptotically exact CIs, while the delta method requires estimating the first-order influence function. Our second implication is the criterion for being Bartlett correctable. Based on our derived formulas, we generalize the observations made in \cite{CORCORAN}, which applies only to the smooth function model,
% (which only works when $\psi(P)=f(E_P X)$)
%Following our definition of $W_n(\psi)$ in the discussion after \eqref{eq: EL_formulation}, if we plug in $\psi=\psi(P_0)$ and normalize it by multiplying $n\phi^{\prime\prime}(1)$, then $W_0:=2n\phi^{\prime\prime}(1) W_n(\psi(P_0)) $ is known as the empirical likelihood statistic. Under certain conditions, it is known that $P(W_0\leq t)= P(\chi^2_1\leq t) + O(n^{-1})$ (nonparametric version of Wilk's theorem). A question is that whether there exists a constant $c$ such that $P(W_0 (1+c/n)\leq t)= P(\chi^2_1\leq t) + O(n^{-2})$. The existence of this constant is known as the Bartlett correctability. Our result implies that the Bartlett correctability
that a necessary and sufficient condition for being Bartlett correctable (in the sense of \citealt{diciccio1991}) for general functionals is
\begin{equation}\label{eq: Bartlett_correctability}
\phi^{\prime\prime\prime}(1)=-2\phi^{\prime\prime}(1),\phi^{(4)}(1)=-3\phi^{\prime\prime\prime}(1).
\end{equation} 
From \cite{diciccio1991}, we see that when a model is Bartlett correctable, there exists a simple correction to the EL statistic that makes its asymptotic distribution closer to $\chi^2$. However, for constructing CIs, even if a model is not Bartlett correctable, we can still achieve \eqref{eq: small_error} by allowing the correction mechanism to depend on the nominal confidence level. As our third implication, we derive our general coverage improvement scheme in \eqref{eq: small_error} taking this latter viewpoint, noting that the scheme can be simplified when \eqref{eq: Bartlett_correctability} holds. As a comparison, if one takes the established view in EL and insists on making an EL statistic to be closer to $\chi^2$ when \eqref{eq: Bartlett_correctability} is violated, the needed correction seems to be substantially more intricate. 

Finally, we stipulate that the presented technical advantages of DRO over EL are foreseen to continue to hold for more general CI construction problems, such as in the presence of auxiliary conditions like $m(P)=m_0$ for some functional $m$ and value $m_0$. Analyzing the higher-order asymptotics of such a DRO could be handled by ``projecting'' the expansion coefficients on the auxiliary constraints, while EL remains challenging to proceed. On the other hand, a current limitation of the DRO formulation \eqref{eq: DRO_formulation} is that it is single-dimensional, whereas EL can generate multi-dimensional confidence regions. We leave the investigation of auxiliary information incorporation and multi-dimensional generalization to the future, but provide some hints to the latter extension in Section \ref{sec:multi}.

The rest of this paper is organized as follows. Section \ref{sec: literature} reviews related literature. Section \ref{sec: DRO expansion} presents the expansions for the optimal values of DRO. Section \ref{sec: Bartlett} presents our correction formula for coverage error reduction. Section \ref{sec: self normalizing} discusses the higher-order self-normalizing property of DRO. Section \ref{sec: comparison with EL} compares DRO with EL. Section \ref{sec: numerical} applies the derived formula to numerical examples. Section \ref{sec:multi} discusses a potential way to generalize our results to multidimensional settings. Sections \ref{sec: DRO_technical} and \ref{subsec: Bartlett_technical} provide the technical developments for the derivations of DRO expansion and Bartlett correction formula respectively. Technical proofs, some lengthy computation steps and additional numerical results are provided in the Appendix.
% The DRO route thus provides another advantage in this regard.

% the higher-order coverage error reduction
% Then, based on the generalized Bartlett correction results for DRO (so that also for EL, by their duality), we study several general properties of DRO/EL.
% First, the higher-order self-normalizing property: 
%As we mentioned earlier, \cite{duchi2016statistics} shows that to get
%asymptotic coverage accuracy $\alpha$, it suffices to choose $\eta = \frac{\chi_{1;1-\alpha}^{2}}{n\phi^{\prime\prime}(1)}$.
%And we can see that this choice is independent of the objective function $\psi$. While for a naive construction
%of the confidence interval, we need to estimate the asymptotic variance of $\psi(\hat{P})$. Our result implies a higher order counterpart of this property: 

% We propose a systematic way
%to study the high order asymptotic properties of DRO with general
%objective functions and $\phi$-divergence. With our method, we can
%derive explicit formulas for the approximation of the optimal value
%of DRO, and this enables us to adjust the ballsize to achieve higher
%order coverage accuracy. And based on our formulas, we are able to
%understand properties of DRO as well as EL in a more general way,
%like the self-normalizing property, and criterion for being Bartlett
%correctable. Compared to the literatrure for Bartlett correction using
%empirical likelihood, our derivation is also more rigorous.

\section{Literature Review}\label{sec: literature}

Our work is related to the literatures on DRO, EL and Bartlett correction. In the following, we discuss our connection to these literatures.

% The basic idea of RO is to construct a set for the uncertainty and requires the solution to be feasible for any realization of the uncertainty in that set (\citealt{Bertsimas2011theory}). DRO is a special class of RO in the sense that the uncertainty is on the underlying probability distribution in a stochastic problem. Depending on different ways to construct the uncertainty set for probability distributions, there are different types of DRO. 
\subsection{Distributionally Robust Optimization} 
As discussed in the introduction, DRO originates as a methodology to handle optimization under uncertainty, and can be viewed as a special class of robust optimization (RO) \citep{bertsimas2011theory,ben2009robust} where the unknown or uncertain parameter is the underlying probability distribution. This uncertainty can come from an intrinsic lack of information about the parameter, or due to the statistical noise of data, the latter of which is known as data-driven DRO \citep{delage2010}. The main idea of (D)RO is to use a so-called uncertainty set or ambiguity set to capture information of these parameters, imposed as a constraint in a worst-case optimization. As a basic statistical guarantee of data-driven DRO, it can be shown that, when the uncertainty set is a confidence region on the true probability distribution, the optimal value of the DRO provides bounds on the target objective, with a confidence at least at the same level as the uncertainty set in covering the true distribution \citep{delage2010,bertsimas2018robust,Esfahani2018Data}. 

Most uncertainty sets in DRO belong to two major classes. One is a neighborhood ball surrounding a baseline distribution, where the ball size is measured by statistical distance. Commonly used distances include the $\phi$-divergence considered in this paper \citep{Glasserman2014,gupta2019near,bayraksan2015data,Iyengar2005,hu2013kullback,blanchet2020extreme,atar2015robust} and the Wasserstein distance \citep{Esfahani2018Data,blanchet2021sample,gao2016distributionally,xie2019tractable}. This class of uncertainty sets has the advantage of being statistically ``consistent'', in the sense that when data size increases, the constructed set, when suitably calibrated to bear a confidence guarantee, shrinks to a singleton of the true distribution. DROs with these uncertainty sets are closely related to regularized empirical optimization which, in the context of statistical learning, helps reduce overfitting and improve out-of-sample performances. In particular, Wasserstein DRO recovers a range of regularization formulations in regression and classification \citep{shafieezadeh2019regularization,Blanchet2019,gao2020wasserstein,chen2018robust}. Divergence DRO provides variance regularization effects on empirical optimization \citep{duchi2019variance}, and also improves a bias-variance trade-off on the attained cost when the variance of future cost is considered \citep{gotoh2018robust}. The second major class of uncertainty sets uses moment and support information  \citep{Goh2010,delage2010,agrawal2012price,Wiesemann2014}, marginal constraints \citep{doan2015robustness,dhara2021worst}, or distributional shape such as convexity and unimodality \citep{popescu2005semidefinite,van2016generalized,chen2021discrete}. This class of uncertainty sets may not be consistent, as they ``summarize'' information in low dimensions. They are motivated from the provision of robust bounds in situations where data are intrinsically lacking, e.g., extremal estimation \citep{lam2017tail}, or in non-stationary environments with large intrinsic uncertainties, e.g., in power system optimization \citep{zhang2016distributionally,zhao2017distributionally}.

% where future dynamics can deviate from past data
% . is believed to deviate from  reflected swhen facing problems with fundamental lack of information, for instance in extremal estimation .
% uses DRO in the setting of extreme value theory and derives robust upper bounds for extreme quantiles.  
 %\cite{chen2021discrete} considers distributions that satisfy a given set of moments and extra shape constraints [is the latter related to ``sparse data''?].  exhibits
% We review several statistical connections of DRO. DRO
% extremal performance measures when tail data are scarce.  finds the worst-case distribution over a class of distributions whose tail density is convex, \cite{blanchet2020extreme} uses DRO in the setting of extreme value theory and derives robust upper bounds for extreme quantiles.  

In this paper, we consider DRO based on $\phi$-divergence, which belongs to the first class above. We are particularly interested in using it as a machinery to construct high-accuracy CIs. In this regard, \cite{BenTal2013} considers the tractabiliy of this DRO and the calibration of the ball size so that the uncertainty set covers the true distribution with a prescribed confidence, but the resulting bound generated from such calibrated DRO could be loose. \cite{lam2019recovering} develops asymptotically exact confidence bounds via DRO for expected values parametrized by decision variables for use in constrained optimization. \cite{LamZhou2015,LAM2017301} use DRO to construct CIs for the optimal values of stochastic optimization problems. \cite{duchi2016statistics} further develops asymptotically exact CIs for general smooth functionals via DRO. Except \cite{BenTal2013}, all the above works connect with the classical EL theory which we will describe in the next subsection. Compared to all the above works, our study focuses on higher-order corrections to the CIs, and in generality that is beyond the EL literature.
% so that the coverage errors have smaller orders.

Our main mathematical developments are based on asymptotic expansions of the optimal values of DRO. Such type of expansions has been studied mostly in the context of sensitivity analysis \citep{dupuis2015pathspace,Lam2016robust}.
% , but ours is substantially more complex. 
%First-order asymptotic expansions of DRO have been investigated in \cite{dupuis2015pathspace,duchi2016statistics,gotoh2018robust,gotoh2021calibration}, which provide robust sensitivity analysis. does not require and give a higher-order expansion for the robust objective function, which have a different goal from ours.  are more related to our work. Like our work, they also  models, while the baseline model in our work is the empirical distribution, which is random. 2)
In particular, 
% To explain, among the existing works, (Proposition 3.3 of \cite{gotoh2018robust}) , $\psi(P)=E_{\xi\sim P}[\ell(x,\xi)]$ with fixed $x$, which is linear in $P$.
\cite{gotoh2018robust,gotoh2021calibration} and \cite{Lam2016robust,Lam2018sensitivity} study higher-order expansions of DRO. \cite{gotoh2018robust,gotoh2021calibration} approximate the mean and variance of an underlying reward by deriving higher-order expansions. Because of their focus, their expansions apply only to expectations, i.e., linear functions in $P$. Moreover, they consider a penalty formulation, instead of a nominally defined DRO, in which the constraints do not need to be explicitly handled in the asymptotic. As such their analysis is considerably simpler than our setting. %(as we discussed in the introduction, the nonlinearlity of $\psi$ leads to a challenge).  
\cite{Lam2016robust,Lam2018sensitivity}, like our work, derive higher-order expansions for DRO when the ball size shrinks to 0, but our expansions are different and more general in the sense that our baseline is an empirical distribution, which is random instead of a fixed baseline, and that we allow $\phi$ to be a general convex function instead of Kullback-Leibler (KL) or $\chi^2$-divergences considered in \cite{Lam2016robust,Lam2018sensitivity}.

We close this subsection by noting that DRO relates intimately with risk-averseness. It is widely known that any coherent risk measure, such as the conditional value-at-risk, can be expressed as a DRO \citep{shapiro2014lectures}. This view has been utilized recently in machine learning to improve model training on minority subpopulations and to incite fairness under a lack of data diversity \citep{duchi2020learning,duchi2020distributionally,pmlr-v80-hashimoto18a}. Similar ideas are used in deep learning, in particular adversarial training, in which models are trained using adversarially perturbed input data \citep{GoodfellowSS14,madry2018towards,sinha2018certifiable}. Lastly, DRO also helps improve model performances under distributional shifts in transfer learning \citep{Sagawa2020Distributionally,liu2021stable}. %[do we need adversarial training, transfer learning and discussion on the degradation to the average-case performance? It seems that these are more related to machine learning] 

%The study in \cite{gupta2019near} shows that DRO also has connection to Bayesian.  provides a nonparametric tool to construct CIs and is EL is studied extensively in the literature. The monograph \cite{owen2001empirical} contains a range of topics on EL, including the adaption of , the large deviation optimality of EL \citep{kitamura2001asymptotic}  as well as the Bartlett correctability of EL as studied in this paper (INSTEAD OF JUST SAYING THIS, WE NEED A FEW REFRENCES TO PRESENT A REPRESENTATIVE RANGE OF EL WORKS) Recently, there are some studies based on the relation between DRO and EL.  
\subsection{Empirical Likelihood and Bartlett Correction}
EL, first proposed by \cite{owen1988empirical}, can be viewed as a nonparametric analog to the maximum likelihood theory. In particular, the empirical distribution plays the role of a nonparametric maximum likelihood estimate, and inference is conducted based on the induced likelihood ratio and an analog of Wilks' theorem. EL has since then been vastly studied across statistical problems which include, to name a few, regression \citep{owen1991empirical}, kernel density estimation \citep{hall1993empirical,chen2000empirical}, biased or censored data \citep{LI199595,murphy1997semiparametric} and dependent data \citep{mykland1995dual} (for more applications, see the monograph \cite{owen2001empirical}). More recently, \cite{hjort2009} considers extensions of EL to allow for nuisance parameters with slower than $\sqrt{n}$-rate of convergence, and the number of estimating equations to grow with the sample size $n$. Regarding the relation between DRO and EL, in addition to the works described in the last subsection,
%\cite{lam2019recovering} and \cite{duchi2016statistics} use the duality between DRO and EL to calibrate the ball size of DRO to achieve the asymptotic exact feasibility guarantee. 
% \cite{LamQian2016,lam2017optimizationbased} use EL theory to design the DRO procedure for the quantification of simulation input uncertainty.  %\cite{LamZhou2015,LAM2017301} use EL to construct confidence intervals for sample average approximation, and the computation of the confidence interval is formulated as DRO.
a notable development is brought by \cite{blanchet2021sample,Blanchet2019,blanchet2019confidence} who propose the Wasserstein profile function, which is a Wasserstein analog to the profile likelihood in classical EL that uses the KL divergence, and develop its asymptotic theory and regularization implications.

\sloppy The notion of Bartlett correction first appeared in the parametric likelihood ratio test \citep{Bartlett1937properties,lawley1956general}. %\cite{Martin1990bootstrap} shows that in the parametric setting,  bootstrap iteration can reduce the coverage error of Bartlett-corrected likelihood ratio-based confidence interval from $O(n^{-2})$ to $O(n^{-3})$ (HOW DOES THIS RELATE? AND IF WE CITE THIS, WOULD WE HAVE TO CITE OTHER PARAMETRIC WORKS?). 
In the nonparametric setting, \cite{diciccio1991} first showed that EL with reverse KL divergence is Bartlett correctable, while the bootstrap is not. Bartlett correctability for EL using other choices of divergence is studied by several papers, including \citet{Baggerly1998}
and \citet{CAMPONOVO201438} who consider the Cressie--Read
power divergence family, and \citet{CORCORAN} who considers general $\phi$-divergences. These papers all consider the smooth function model. In addition, \cite{zhang1996accuracy} extends the Bartlett correctability of EL to $M$-estimation using techniques similar to the smooth function model, and \cite{chen1993smoothed} studies the Bartlett correctability of quantile estimation, with a smoothed EL that is different from the standard EL approach considered in this paper. 

% As mentioned in the introduction, t, instead of general von Mises functionals that we focus on in this work There are also several works that consider models different from smooth functions
The development of Bartlett correction employs Edgeworth expansion, which we discuss briefly. \citet{bhattacharya1978} shows the validity of Edgeworth expansions for smooth function models. \cite{Callaert1980edgeworth} studies Edgeworth expansions for $U$-statistics with residual $o(n^{-1})$. \cite{withers1983expansions} provides a procedure to compute formal Edgeworth expansions for von Mises functionals, but does not show their validity. \cite{Bhattacharya1983} shows a potential approach to obtain valid Edgeworth expansions for the characteristic function, which could be used to derive expansion for the distribution. However, as they explicitly mentioned, their imposed condition is hard to verify except for the smooth function model.  \citet{TAKAHASHI198856} gives a low-order
Edgeworth expansion with residual $o(n^{-1/2})$ for von Mises functionals. 

\subsection{Other Higher-Order Coverage Correction Methods}
Finally, we review other works on higher-order coverage correction, mostly based on the bootstrap. \cite{davison1997bootstrap} \S2.4 presents
iterated bootstrap studentization. \cite{hall1988bootstrap} and \cite{Hall1992} \S3.11 study iterated bootstrap for statistical problems formulated as the root of a population equation. \cite{hall1986bootstrap} shows the connection between iterated bootstrap and Edgeworth expansion. \cite{beran1987prepivoting} studies prepivoting that can be iterated to obtain higher-order corrections. \cite{efron1982jackknife} \S10.7,  \cite{efron1987better} and \cite{diciccio1996bootstrap} use the bootstrap to automate tricks based on transformations and asymptotic arguments, to obtain bias-corrected and accelerated CIs.

%And it is shown that under regularity
%conditions, when $\psi(P)=f(E_{P}\mathbf{X})$, the Bartlett correctability
%is equivalent to \eqref{eq: Bartlett_correctability}.
%By checking our Bartlett correction formula, we find that our result
%also implies this result of \citet{CORCORAN}. We think the derivation
%in \citet{CORCORAN} is not rigorous and we claim that our work rigorized
%the observation of \citet{CORCORAN} and generlized the observation
%to the case where $\psi(P)$ is not linear.

%It could be seen that the event ``optimal value of the above program
%is smaller than $\eta$'' is equivalent to that ``$\psi(P_{0})$
%falls between the two optimal values of program \eqref{eq: DRO_formulation}''.
%So EL is another possible route to study the asymptotic coverage probability
%of DRO. However, we think for EL-based argument, some technical difficulties
%would appear when the constraint is a general functional of $P$.
%This is because in the EL formulation, we do not have a constant bound
%on $D_{\phi}(P||\hat{P})$ which asserts that $P$ is close to $\hat{P}$.
%While to handle the nonlinear $\psi(P)$, typically we will need to
%expand $P$ around $\hat{P}$ (or $P_{0}$), and this requires carefully
%handling the distance between $P$ and $\hat{P}$. This is a technical
%reason that we follow the DRO route to investigate its theoretical
%properties.

\section{Asymptotic Expansion for DRO}\label{sec: DRO expansion}
Suppose we have observed $X_1,X_2,\dots,X_n$ that are i.i.d. generated from $P_0$. Our goal is to use the maximum and minimum values obtained from \eqref{eq: DRO_formulation} to construct a CI for $\psi(P_0)$ that has a small coverage error, by suitably choosing $\eta$. To do so, we first need to develop a higher-order stochastic expansion for these optimal values. 
% The first step is  Our first step is to characterize the CI constructed using DRO via a high-order stochastic expansion.
% As suggested by \cite{duchi2016statistics}, 

We first rewrite \eqref{eq: DRO_formulation} by converting the decision variable $P$ to likelihood ratio $L:=\frac{dP}{d\hat{P}}$, which is well-defined for $P$ that is absolutely continuous with respect to $\hat P$, i.e., with the same support as the data. We also let $\eta$ shrink at the speed of $n^{-1}$, the reason of which will be apparent in a moment. More specifically, we rewrite \eqref{eq: DRO_formulation} as
\begin{equation}\label{eq: DRO_L_formlation}
\begin{array}{ll}
\max/\min_{L} & \psi(\hat{P}L)\\
\text{subject to} & \hat{E}[\phi(L)]\leq\frac{q}{2n}\\
& \hat{E}L=1
\end{array}
\end{equation}
for some $q>0$, and $\hat E$ denotes the expectation with respect to the empirical distribution $\hat P$.
% Here, $L:=\frac{dP}{d\hat{P}}$ is the likelihood ratio w.r.t. the empirical distribution.
%With some regularity assumptions, we will give a rigorous computation
%of the stochastic approximation of the maximum with a residual of
%$O_{p}(n^{-2})$ (this order is %essentially what is needed for the
%Bartlett correction and it can be of arbitrarily higher order with
%stronger assumptions and more computation). For the purpose of derving
%a rigorous delta method in the computation for edgeworth expansion
%and Bartlett correction, we will also discuss how to strengthen the
%residual to $R$ such that $P(|R|>n^{-2})=O(n^{-2})$.

Recall that $\hat{P}=\frac{1}{n}\sum_{i=1}^n\delta_{X_i}$. Note that since $\hat{P}$ is supported on $\{X_1,\dots,X_n\}$, both $L$ and $\hat{P}L$ can be encoded as $n$-dimensional vectors. With this observation, we will sometimes abuse notation (with clear notice) to regard $L = \left(\frac{dP}{d\hat{P}}(X_1),\frac{dP}{d\hat{P}}(X_2),\dots,\frac{dP}{d\hat{P}}(X_n)\right)$ and $\hat{P}L=L/n$. Thus, \eqref{eq: DRO_L_formlation} can be seen as a pair of optimization problems with objective $\psi(\hat{P}L)$ when $L$ varies in a small region around the vector $\mathbf{1}_n := (1,1,\dots,1)^T$. 

We assume that $\psi$ admits an expansion in terms of influence functions of high enough orders (Chapter 6 of \citet{serfling1980}), uniformly over the domain of $L$. More precisely, we make the following assumption. 
\begin{assumption}
There exist functions $IF_j,j=1,2,3$ and $R_n = O_p (n^{-2})$ such that:
\begin{enumerate}
\item
\[
\left|\psi(\hat{P}L)  -\psi(\hat{P})-\sum_{j=1}^3\frac{1}{j!} E_{\Delta}IF_{j}(K_{1},K_{2},\dots,K_{j};\hat{P})\right|\leq R_n
\]
holds for all $L$ such that
$\hat{E}[\phi(L)]\leq\frac{q}{2n}$. Here, $E_{\Delta}$ denotes the expectation under which $K_1,K_2,\dots\stackrel{\text{i.i.d.}}{\sim} \hat{P}L-\hat{P}$ (i.e., a product signed measure). \\
\item For any $1\leq m\leq5$, $\hat{E}IF_{1}^{m}(K_1;\hat{P})=O_{p}(1)$,
$\hat{E}IF_{2}^{m}(K_{1},K_{2};\hat{P})=O_{p}(1),\hat{E}IF_{3}^{m}(K_{1},K_{2},K_{3};\hat{P})=O_{p}(1)$. Moreover, $\frac{1}{\widehat{Var}IF_{1}(K_1;\hat{P})}=O_{p}(1)$. Here, under $\hat{E}$ and $\widehat{Var}$ we have $K_1,K_2,\dots\stackrel{\text{i.i.d.}}{\sim} \hat{P}$. 
\item For $i=1,2,3$, $IF_{i}(K_{1},\dots,K_{i};\hat{P})$ is permutation invariant in $K_{1},\dots,K_{i}$, and has zero marginal expectations under $\hat{P}$:
\[
E_{K_{j}\sim \hat{P}}IF_{i}(K_{1},\dots,K_{i};\hat{P})=0,\ j=1,2,\dots,i.
\]
Here, $E_{K_j\sim\hat{P}}$ stands for the expectation under $K_j\sim\hat{P}$ with $K_s,s\neq j$ fixed.
\end{enumerate}
\label{assu: psi_expansion}
\end{assumption}
In Assumption \ref{assu: psi_expansion}, part 1 specifies the form of expansion for $\psi(\hat{P}L)$ and the requirement on the residual. In part 2, the first three conditions guarantee that the influence functions are stochastically bounded, and the condition $\frac{1}{\widehat{Var}IF_{1}}=O_{p}(1)$ ensures that the limiting distribution of $\sqrt{n}(\psi(\hat{P}L)-\psi(\hat{P}))$ is not degenerated. Part 3 requires that the influence functions are written in a canonical form, since there could exist many versions of influence functions that satisfy the first two parts (e.g., changing the influence functions by a constant will not affect parts 1-2). Note that Part 3 is not restrictive since for any influence functions that satisfy the first two parts, we may: 1) replace them with the average of permutations to make them permutation invariant and then 2) replace $IF_{i}(X_1,\dots,X_i;\hat{P}),i=1,2,3$ with (similar to Lemma A of \cite{serfling1980} \S 6.3.2)
\[
\left[\prod_{j=1}^{i}\left(1-E_{K_{j}\sim \hat{P}}\right)\right]IF_{i}(K_{1},\dots,K_{i};\hat{P}).
\]
After this operation, the condition in part 3 is satisfied, but $E_{\Delta} IF_{i}(X_{1},X_{2},\dots,X_{i};\hat{P})$
will not change so that parts 1-2 still hold.
% (CAN WE EXPLAIN THE ASSUMPTION FULLY? THE PRESENT DISCUSSION GIVES A FEELING THAT WE SELECTIVELY PICK SOME ASSUMPTIONS TO EXPLAIN..)
We will provide some examples where Assumption \ref{assu: psi_expansion} holds in Section \ref{subsec: Verification-of-Assumption}. %With standard operations, we can write the influence functions in canonical forms that satisfy the following convention (see the end of Section \ref{subsec: technical lemmas} for the explanation). If not otherwise mentioned, we suppose all influence functions have this canonical form.

%It seems that Assumption \ref{assu: psi_expansion} is kind of general.
%But given an explicit choice of $\psi(\hat{P})$, usually this is
%verifiable using Lemma \ref{assu: psi_expansion} and appropriate
%bound regarding the higher order influence functions. In Section %\ref{subsec: Verification-of-Assumption},
%we will consider the verification of this assumption under different
%models.

%we will
%have that for the optimization problem with objective 
%\begin{equation}
%\psi(\hat{P})+<IF_{1}(\hat{P}),\hat{P}L-\hat{P}>+\frac{1}{2}<IF_{2}(\hat{P}%),\left(\hat{P}L-\hat{P}\right)^{2}>+\frac{1}{6}<IF_{3}(\hat{P}),\left(\hat{P}L-\hat{P}\right)^{3}>
%\end{equation}
%yields the same asymptotic expansion as the original problem up to
%an error of $O_{p}(n^{-2})$. In the rest of this section, $\psi(\hat{P}L)$
%will be handled as the above expansion.

%We also make the following assumptions regarding the moments of influence
%functions.
%\begin{assumption}
%For any $1\leq m\leq5$, $\hat{E}IF_{1}^{m}(X;\hat{P})=O_{p}(1)$,
%$\hat{E}IF_{2}^{m}(X_{1},X_{2};\hat{P})=O_{p}(1),\hat{E}IF_{3}^{m}(X_{1},X_%{2},X_{3};\hat{P})=O_{p}(1)$.\label{assu: IF}
%\end{assumption}

%\begin{assumption}
%$\frac{1}{\hat{E}IF_{1}^{2}}=O_{p}(1)$.\label{assu: IF_1_var_not_small}
%\end{assumption}

%Equivalently, the assumption above means that there exists some $\delta>0$
%such that $P(\hat{E}IF_{1}^{2}>\delta)\rightarrow1$. One way to estibalish
%the above two assumptions is to use Markov inequalities and the finiteness
%of moments. 
Our next assumption is on the smoothness and convexity
of the divergence function $\phi$:
\begin{assumption}\label{assu: phi_smoothness}
$\phi$ is convex, $\phi(1)=\phi^{\prime}(1)=0,\phi^{\prime\prime}(1)>0$
and has continuous fifth-order derivative on $(0,\infty)$. Moreover, there exists $\delta>0$ such that $\phi^{\prime\prime}(x)>\delta$ for $x\in [c_{0},C_{0}]$ where $[c_{0},C_{0}]$ is the range where $\phi(x)\leq q/2$.
%For $y$ in the range $\left\{ \phi^{\prime}(x):\phi(x)\leq\frac{q}{2},x\geq0\right\} $,
%we have that $h(y):=\left(\phi^{\prime}\right)^{-1}(y)-1$ (which
%is well defined by the strict convexity of $\phi$) has continuous
%derivative up to the $4$-th order.
\end{assumption}

Assumption \ref{assu: phi_smoothness} is satisfied for most common choices of $\phi$ \citep{pardo2018statistical}. Examples include $\chi^2$ divergence where $\phi(x) = (x-1)^2$ and KL divergence where $\phi(x) = x\log x - x + 1$.

% (note that adding $\phi(x)$ by a multiple of $x-1$ would not change the optimization problem \eqref{eq: DRO_L_formlation} since each feasible $L$ much satisfy $\hat{E}L=1$).  
% I DON'T UNDERSTAND WHY WE SAY THIS, AND ALSO WHAT IT EXACTLY MEANS BY "CHANGING $\phi$ BY A MULTIPLE".

%For example, for KL divergence, we have that $\phi(x)=-\log x+x-1$
%and $h(y)=\frac{y}{1-y}$. $\phi(x)\leq\frac{q}{2}$ implies that
%$x$ is bounded away from $0$ and $\infty$. And hence %$\phi^{\prime}(x)=1-\frac{1}{x}\in(-\infty,1-s]$
%and $h$ is smooth in this range.

%If necessary, we can also find $\phi_{u}$ and $\phi_{l}$ such that
%\[
%\phi_{l}\leq\phi\le\phi_{u}
%\]
%and $\phi_{l},\phi_{u}$ has the same derivatives (up to 4-th order)
%as $\phi$ at 1. Correspondingly, we will have $h_{u},h_{l}$. Then
%it also suffices that the above assumption holds for $h_{u}$ and
%$h_{l}$.

% \subsection{The expansion for the optimal value of DRO}
With the above assumptions, we have the following expansion on the DRO \eqref{eq: DRO_L_formlation}:
\begin{thm}
\label{thm: DRO_expansion}Under Assumptions \ref{assu: psi_expansion} and \ref{assu: phi_smoothness}, the maximum value of \eqref{eq: DRO_L_formlation} admits the following stochastic expansion:
\begin{align*}
 & \psi(\hat{P})+n^{-1/2}\sqrt{\frac{q\hat{\kappa}_{2}}{\phi^{\prime\prime}(1)}}+
 \frac{n^{-1}q}{\phi^{\prime\prime}(1)\hat{\kappa}_2}\left(-\frac{1}{6}\frac{\phi^{\prime\prime\prime}(1)\hat{\gamma}}{\phi^{\prime\prime}(1)}+\frac{1}{2}\hat{\mu}_{2,c}\right)+\frac{n^{-3/2}q^{3/2}}{\phi^{\prime\prime}(1)^{3/2}\hat{\kappa}_{2}^{3/2}}\left[\frac{1}{6}\hat{\mu}_{3,c}-\frac{1}{2}\frac{\phi^{\prime\prime\prime}(1)}{\phi^{\prime\prime}(1)}\hat{\mu}_{2,b}\right.\\
 & \left.+\frac{\phi^{\prime\prime\prime}(1)}{3\hat{\kappa}_{2}\phi^{\prime\prime}(1)}\hat{\gamma}\hat{\mu}_{2,c}+\left(\frac{\phi^{\prime\prime\prime}(1)^{2}}{8\phi^{\prime\prime}(1)^{2}}-\frac{1}{24}\frac{\phi^{(4)}(1)}{\phi^{\prime\prime}(1)}\right)\hat{\mu}_{4}+\frac{1}{2}\hat{\mu}_{2,a}-\frac{\phi^{\prime\prime\prime}(1)^{2}}{18\phi^{\prime\prime}(1)^{2}\hat{\kappa}_{2}}\hat{\gamma}^{2}-\frac{\phi^{\prime\prime\prime}(1)^{2}}{8\phi^{\prime\prime}(1)^{2}}\hat{\kappa}_{2}^{2}-\frac{1}{2\hat{\kappa}_{2}}\hat{\mu}_{2,c}^{2}\right]+O_{p}(n^{-2})
\end{align*}

Here, 
\begin{gather*}
    \begin{array}{c}
         \hat{\kappa}_{2}:=\hat{E}IF_{1}^{2}(X;\hat{P}),\\
         \hat{\gamma}:=\hat{E}IF_{1}^{3}(X;\hat{P}),\\
         \hat{\mu}_{2,a}:=\hat{E}IF_{1}(X;\hat{P})IF_{1}(Z;\hat{P})IF_{2}(X,Y;\hat{P})IF_{2}(Y,Z;\hat{P}),\\
         \hat{\mu}_{2,b}:=\hat{E}IF_{1}(X;\hat{P})IF_{1}(Y;\hat{P})^{2}IF_{2}(X,Y;\hat{P}),\\
         \hat{\mu}_{2,c}:=\hat{E}IF_{1}(X;\hat{P})IF_{1}(Y;\hat{P})IF_{2}(X,Y,\hat{P}),\\
         \hat{\mu}_{3,c}:=\hat{E}IF_{1}(X;\hat{P})IF_{1}(Y;\hat{P})IF_{1}(Z;\hat{P})IF_{3}(X,Y,Z;\hat{P}),\\
         \hat{\mu}_{4}:=\hat{E}IF_{1}^{4}(X;\hat{P})
    \end{array}
\end{gather*}
In the above definitions, $X,Y,Z\stackrel{\text{i.i.d.}}{\sim}\hat{P}$ and $\hat E$ denotes the corresponding expectation under this product measure. The minimum value of \eqref{eq: DRO_L_formlation} has the same expansion, but with a negation on the coefficients for $n^{-1/2}$ and $n^{-3/2}$.
% replaced by their negative additive inverse.
\end{thm}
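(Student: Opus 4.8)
The plan is to treat \eqref{eq: DRO_L_formlation} as a finite-dimensional constrained optimization over the weight vector $L=(L_1,\dots,L_n)$, write down its Karush--Kuhn--Tucker (KKT) system, and extract from it a scalar-indexed fixed point equation for the optimal weights that can be solved asymptotically. Writing $W_i:=L_i-1$, Assumption \ref{assu: psi_expansion}(1) lets me replace the objective by the explicit polynomial
\[
\psi(\hat P L)-\psi(\hat P)=\frac1n\sum_i W_i a_i+\frac{1}{2n^2}\sum_{i,j}W_iW_j b_{ij}+\frac{1}{6n^3}\sum_{i,j,k}W_iW_jW_k c_{ijk}+R_n,
\]
where $a_i:=IF_1(X_i;\hat P)$, $b_{ij}:=IF_2(X_i,X_j;\hat P)$, $c_{ijk}:=IF_3(X_i,X_j,X_k;\hat P)$ and $R_n=O_p(n^{-2})$, while the constraints become $\sum_i\phi(1+W_i)\le q/2$ and $\frac1n\sum_i W_i=0$. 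Since to leading order the objective is the linear form $\frac1n\sum_i W_ia_i$ maximized over a convex ball, the divergence constraint is active at both the max and the min, so I introduce a multiplier $\lambda$ for it and $\mu$ for the normalization; the stationarity condition then reads
\[
\lambda\,\phi'(1+W_i)+\mu=a_i+\frac1n\sum_l W_l b_{il}+\frac{1}{2n^2}\sum_{l,m}W_lW_m c_{ilm},\qquad i=1,\dots,n.
\]
Because $\phi''(1)>0$ (Assumption \ref{assu: phi_smoothness}), $\phi'$ is locally invertible near $1$, so this is precisely a fixed point equation for $W$ (the unknown reappears inside the $b$- and $c$-sums on the right), which is the object I analyze.

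Next I would fix the orders of magnitude that drive the whole expansion. Balancing the leading part $\lambda\phi''(1)W_i\approx a_i$ of stationarity against the active constraint $\sum_i\phi(1+W_i)=q/2$ forces $\lambda$ to be of order $n^{1/2}$ and hence $W_i$ of order $n^{-1/2}$, while $\frac1n\sum_i W_i=0$ together with $\frac1n\sum_i a_i=0$ (Assumption \ref{assu: psi_expansion}(3)) forces $\mu$ to be of order $n^{-1/2}$. I then posit the formal expansion
\[
W_i=n^{-1/2}W_i^{(1)}+n^{-1}W_i^{(2)}+n^{-3/2}W_i^{(3)}+O_p(n^{-2}),
\]
with $\lambda=n^{1/2}(\lambda_0+n^{-1/2}\lambda_1+n^{-1}\lambda_2)+\cdots$ and $\mu=n^{-1/2}\mu_1+n^{-1}\mu_2+\cdots$, substitute into the Taylor expansion of $\phi'(1+W_i)$ about $1$ (which brings in $\phi'''(1)$ and $\phi^{(4)}(1)$), and match powers of $n^{-1/2}$. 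At each order the stationarity equation returns $W_i^{(k)}$ as an explicit function of $a_i$, the lower-order weights, and the cross-sums $\frac1n\sum_l W_l b_{il}$ and $\frac{1}{2n^2}\sum_{l,m}W_lW_m c_{ilm}$; the two scalar constraints (active divergence and normalization) then pin down $\lambda_{k-1}$ and $\mu_k$. In particular the leading constraint yields $\lambda_0=\sqrt{\hat\kappa_2/(q\phi''(1))}$ and $W_i^{(1)}\propto a_i$.

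Having solved for $W$, I substitute back into the polynomial objective and collect terms through $O_p(n^{-3/2})$. The three pieces contribute at staggered orders: the linear piece $\frac1n\sum_i W_ia_i$ spans $n^{-1/2}$ through $n^{-3/2}$ and needs $W^{(1)},W^{(2)},W^{(3)}$; the quadratic piece $\frac{1}{2n^2}\sum W_iW_j b_{ij}$ starts at $n^{-1}$ and needs $W^{(1)},W^{(2)}$; the cubic piece $\frac{1}{6n^3}\sum W_iW_jW_k c_{ijk}$ enters only at $n^{-3/2}$ and needs only $W^{(1)}$. Using the zero-marginal and permutation-invariance properties (Assumption \ref{assu: psi_expansion}(3)) to reduce the empirical averages to the listed quantities, the named moments appear as expected: $\hat\kappa_2$ at leading order; $\hat\gamma$ and $\hat\mu_{2,c}$ at order $n^{-1}$; and at order $n^{-3/2}$ the mixed terms $\hat\mu_{2,a},\hat\mu_{2,b},\hat\mu_{3,c}$ from the cross-sums together with $\hat\mu_4$ and the remaining $\hat\gamma,\hat\mu_{2,c}$ contributions arising from the $\phi'''(1)$ and $\phi^{(4)}(1)$ nonlinearities of $\phi$ and its inverse. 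After eliminating $\lambda_0$ this should reproduce the stated coefficients. The minimum is handled by applying the identical analysis to $-\psi$, whose influence functions are $-IF_j$, and negating the result; the two sign changes cancel on the even-order terms ($\psi(\hat P)$ and the $n^{-1}$ term, since $\hat\kappa_2,\hat\mu_4,\hat\mu_{2,a}$ are even while $\hat\gamma,\hat\mu_{2,c},\hat\mu_{2,b},\hat\mu_{3,c}$ flip) and reinforce on the odd-order $n^{-1/2}$ and $n^{-3/2}$ terms, giving the claimed negation.

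The main obstacle is not the term-collection algebra, which is lengthy but mechanical, but the rigorous justification that this formal expansion is valid with an $O_p(n^{-2})$ remainder uniformly over the feasible set. Concretely I would need to: (i) prove existence and uniqueness of the KKT solution in a shrinking neighborhood of $\mathbf 1_n$ and that it attains the global optimum, using convexity of $\phi$ and the active-constraint argument; (ii) show that the iteration induced by the stationarity equation is a contraction on an appropriately scaled ball, so the successive-substitution errors are controlled order by order; and (iii) bound uniformly both $R_n$ from Assumption \ref{assu: psi_expansion}(1) and the Taylor remainders of $\phi'$ and $(\phi')^{-1}$. This last step is exactly what the fifth-order smoothness of $\phi$ (Assumption \ref{assu: phi_smoothness}) and the moment bounds $\hat E\,IF_j^m=O_p(1)$ for $m\le5$ together with $1/\widehat{Var}\,IF_1=O_p(1)$ (Assumption \ref{assu: psi_expansion}(2)) are there to supply. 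I expect these steps to be carried out in the technical development of Section \ref{sec: DRO_technical}, with the explicit term-collection computation deferred to the Appendix.
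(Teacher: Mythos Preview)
Your proposal is correct and follows essentially the same route as the paper: KKT stationarity on the cubic truncation of the objective, asymptotic solution of the resulting fixed point equation for the weights and multipliers, and back-substitution into the objective. The paper organizes the expansion slightly differently---it first expands $L$ in the two multipliers $(\tilde\alpha,\beta)$ (your $1/\lambda$ and $\mu$) by repeatedly feeding the current approximation of $L$ into the right side of $L_i=1+(\phi')^{-1}(\tilde\alpha(n\partial_i\psi-\beta))$, then eliminates $\beta$ via $\hat E L=1$, then eliminates $\tilde\alpha$ via $\hat E\phi(L)=q/(2n)$---whereas you posit simultaneous power series in $n^{-1/2}$; these are formally equivalent. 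For rigorous residual control the paper introduces a functional $\bar O_p^{(1/m)}$ notation (``$\hat E\Lambda^m=O_p(\cdot)$'') and two H\"older-type lemmas showing that integrating against $IF_2$ or $IF_3$ preserves these bounds, which is exactly the machinery needed for your point (iii). One small slip in your minimum argument: $\hat\mu_{2,b}$ and $\hat\mu_{3,c}$ are degree-four in the influence functions and hence do \emph{not} flip under $IF_j\mapsto -IF_j$; in fact every term in the $n^{-3/2}$ bracket is even, so the bracket is unchanged and the sign flip comes entirely from the outer negation $\psi_{\max}(-\psi)=-\psi_{\min}(\psi)$. Your conclusion is still right. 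The paper instead obtains the min by observing that the KKT system for the min differs only in requiring $\tilde\alpha<0$, so one takes the other branch of the square root when solving $\hat E\phi(L)=q/(2n)$ for $\tilde\alpha$.
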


Theorem \ref{thm: DRO_expansion} is obtained by using the KKT conditions to express the decision variable $L$ and Lagrange multipliers in terms of the fixed point equation of a suitable iteration scheme. From this, we analyze the asymptotic of the fixed point. The detailed proof is given in Section \ref{sec: DRO_technical}. 

\subsection{Examples of \texorpdfstring{$\psi(P)$}{psiP}}\label{subsec: Verification-of-Assumption}
We provide several examples where an explicit expansion of $\psi(P)$ is available and Assumption \ref{assu: psi_expansion} could be readily verified. 

\paragraph*{Example 1: Smooth function models}

We consider the model in \citet{diciccio1991} where $\psi(P)=f(E_{P}Z)$ and $Z$ is a random vector written as $Z=(Z_{1},Z_{2},...,Z_{q})$. For this model, we can verify Assumption \ref{assu: psi_expansion} by Taylor expanding $\psi$. We have the following:

\begin{proposition}\label{prop: function_of_mean}
Suppose that 
$f$ has continuous derivatives up to the fourth order
in a neighborhood of $E_{0}Z$ and $Z$ has finite moments up to the
$15$-th order, and $\text{Var}_0 (\nabla f(E_0Z)^{\top}\cdot Z)>0$. Then Assumption \ref{assu: psi_expansion} holds with influence functions given by 
\begin{equation}\label{eq: IF_function_mean}
IF_{k}(Z^{(1)},...,Z^{(k)};\hat{P})=\sum_{1\leq i_{1}...i_{k}\leq q}f_{i_{1}i_{2}..i_{k}}(\hat{E}Z)(Z_{i_{1}}^{(1)}-\hat{E}Z_{i_{1}})...(Z_{i_{k}}^{(k)}-\hat{E}Z_{i_{k}}).
\end{equation}
Here $f_{i_{1}i_{2}..i_{k}}=\frac{\partial^k f}{\partial z_1\dots\partial z_q} $, $E_0$ and $\text{Var}_0$ denote the expectation and variance under $P_0$.
\end{proposition}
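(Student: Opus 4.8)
The plan is to exploit the explicit structure $\psi(\hat PL)=f(\hat E[LZ])$ and reduce the whole statement to a Taylor expansion of $f$ about the empirical mean $\hat\mu:=\hat EZ$, with the expansion controlled by the divergence budget. Throughout write $\mu:=E_{\hat PL}Z=\hat E[LZ]$, so that $\psi(\hat PL)=f(\mu)$ and $\psi(\hat P)=f(\hat\mu)$.

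First I would verify the algebraic heart of the claim: the influence functions in \eqref{eq: IF_function_mean} are engineered precisely so that $\tfrac{1}{j!}E_{\Delta}IF_j$ reproduces the $j$-th order term of the Taylor expansion of $f(\mu)$ about $\hat\mu$. Using that under $E_{\Delta}$ the variables $K_1,K_2,\dots$ are i.i.d. from the signed measure $\hat PL-\hat P$, together with $\hat EL=1$, a one-line computation gives the coordinatewise identity $E_{\Delta}[K_{1,i}-\hat\mu_i]=\hat E[(L-1)(Z_i-\hat\mu_i)]=\mu_i-\hat\mu_i$. Multilinearity of $IF_j$ and the product (independence) structure of $E_{\Delta}$ then yield $E_{\Delta}IF_j=D^jf(\hat\mu)[(\mu-\hat\mu)^{\otimes j}]$, where the symmetry of the mixed partials $f_{i_1\cdots i_j}$ (valid since $f\in C^4$) is used. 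Consequently $\psi(\hat PL)-\psi(\hat P)-\sum_{j=1}^3\tfrac{1}{j!}E_{\Delta}IF_j$ is exactly the third-order Taylor remainder of $f$ at $\hat\mu$, i.e. $\tfrac{1}{4!}D^4f(\xi)[(\mu-\hat\mu)^{\otimes4}]$ for some $\xi$ on the segment $[\hat\mu,\mu]$.

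Second, to obtain $R_n=O_p(n^{-2})$ uniformly over feasible $L$, I would convert the divergence budget into an $L^2$ bound on $L-1$ via Assumption \ref{assu: phi_smoothness}. Since $\tfrac1n\sum_i\phi(L_i)\le \tfrac{q}{2n}$ gives $\sum_i\phi(L_i)\le q/2$ and each $\phi(L_i)\ge0$, every $L_i$ lies in $[c_0,C_0]$, where $\phi''>\delta$; convexity with $\phi(1)=\phi'(1)=0$ then gives $\phi(x)\ge\tfrac\delta2(x-1)^2$ there, so the constraint forces $\hat E[(L-1)^2]\le q/(\delta n)$ deterministically. Cauchy--Schwarz yields $|\mu-\hat\mu|^2\le \hat E[(L-1)^2]\,\hat E|Z-\hat\mu|^2=O_p(n^{-1})$ uniformly over feasible $L$, since the sample covariance $\hat E|Z-\hat\mu|^2=O_p(1)$. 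Combined with boundedness of $D^4f$ on a fixed neighborhood of $E_0Z$ (which contains both $\hat\mu$ and $\mu$ on an event of probability $\to1$, by the law of large numbers and $|\mu-\hat\mu|\to_p0$), the remainder is $O_p(n^{-2})$. I expect this uniform-in-$L$ remainder control to be the main obstacle: one must ensure the Taylor bound holds simultaneously for all feasible $L$ and that the plug-in $\hat\mu$, the intermediate $\xi$, and the fourth derivative stay in the region of smoothness, which the event-based $O_p$ argument handles by absorbing the small-probability exceptional set.

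Third, for Part 2 I would expand each $\hat E\,IF_i^m$ into sample central moments of $Z$ multiplied by continuous derivatives $f_{i_1\cdots i_k}(\hat\mu)$, all of which converge by the law of large numbers and continuity, hence are $O_p(1)$; non-degeneracy $1/\widehat{Var}\,IF_1=O_p(1)$ follows because $\widehat{Var}\,IF_1\to_p\text{Var}_0(\nabla f(E_0Z)^{\top}Z)>0$. The $15$-th moment hypothesis is exactly what is needed here: in $\hat E\,IF_3^5=n^{-3}\sum_{a,b,c}IF_3(X_a,X_b,X_c)^5$, the diagonal terms $a=b=c$ produce $IF_3(X_a,X_a,X_a)^5$, a degree-$15$ polynomial in a single observation, so that $O_p(1)$ control of this average requires $E_0|Z|^{15}<\infty$. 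Finally, Part 3 is immediate: symmetry of the mixed partials makes \eqref{eq: IF_function_mean} permutation invariant, and $E_{Z^{(l)}\sim\hat P}[Z^{(l)}_{i_l}-\hat\mu_{i_l}]=0$ gives the vanishing marginal expectations.
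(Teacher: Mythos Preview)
Your proposal is correct and follows essentially the same route as the paper's proof: a Taylor expansion of $f$ about $\hat\mu=\hat EZ$, identification of $E_\Delta IF_j$ with the $j$-th differential $D^jf(\hat\mu)[(\mu-\hat\mu)^{\otimes j}]$, and a uniform-in-$L$ bound $|\mu-\hat\mu|=O_p(n^{-1/2})$ obtained from Cauchy--Schwarz together with the $\ell_2$ control on $L-1$ that the divergence constraint provides. The only cosmetic difference is that you re-derive the bound $\hat E[(L-1)^2]\le q/(\delta n)$ directly from Assumption~\ref{assu: phi_smoothness}, whereas the paper quotes Lemma~\ref{lem: L_bound}; you are also more explicit than the paper about why the $15$-th moment is needed in Part~2 and about the verification of Part~3.
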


\paragraph*{Example 2: $V$-statistics}
Consider a $V$-statistic $\psi(P)=E_{X_i\stackrel{\text{i.i.d.}}{\sim}P}h(X_1,X_2,\dots,X_T)$. Without loss of generality (WLOG), suppose that the kernel function $h$ is invariant under permutation (otherwise we can consider averaging the permutation sum). To verify Assumption \ref{assu: psi_expansion}, we can use the separability assumption in \citet{Lam2016robust}. Denote $X_{1:T}:=(X_1,\dots,X_T)$. We have the following:
\begin{proposition}\label{prop: V_stat}
Assume that $|h(X_{1:T})|\leq\Lambda_{1}(X_{1})+\Lambda_{2}(X_{2})+\cdots+\Lambda_{T}(X_{T})$
and $E_{0}e^{\theta\Lambda_{t}(X)}<\infty$ for each $t=1,2,\cdots,T$
and some $\theta\neq 0$. Moreover, assume that $\text{Var}_0 g(X)>0$ where $g(x)=E_0 [h(X_{1:T})|X_1=x]$. Then parts 1-2 of Assumption \ref{assu: psi_expansion} hold with influence functions given by 
\[
IF_{k}(x_{1},x_{2},\dots,x_{k};\hat{P})=\sum_{i_{1},i_{2},\dots,i_{k}\text{mutually different}}E_{\hat{P}}\left[h(X_{1},\dots,X_{T})|X_{i_{1}}=x_{1},\dots,X_{i_{k}}=x_{k}\right].
\]
\end{proposition}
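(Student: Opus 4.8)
The plan is to exploit the fact that a $V$-statistic is a polynomial functional of its argument measure, so that the von Mises expansion around $\hat{P}$ terminates and is in fact an exact finite identity. Writing $\Delta:=\hat{P}L-\hat{P}$ and starting from $\psi(\hat{P}L)=\int h\,d(\hat{P}L)^{\otimes T}$, I would substitute $\hat{P}L=\hat{P}+\Delta$, expand the $T$-fold product, and collect terms by the number $k$ of factors of $\Delta$; permutation invariance of $h$ makes the $\binom{T}{k}$ terms carrying $k$ copies of $\Delta$ identical, yielding the exact identity
\[
\psi(\hat{P}L)-\psi(\hat{P})=\sum_{k=1}^{T}\binom{T}{k}E_{\Delta}g_{k}(K_1,\dots,K_k),\qquad g_{k}(x_1,\dots,x_k):=E_{\hat{P}}[h(X_{1:T})\mid X_1=x_1,\dots,X_k=x_k],
\]
with $K_1,K_2,\dots\stackrel{\text{i.i.d.}}{\sim}\Delta$. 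Matching the $k$-th term to $\frac{1}{k!}E_{\Delta}IF_{k}$ forces $IF_{k}=\frac{T!}{(T-k)!}g_{k}$, which, since there are exactly $\frac{T!}{(T-k)!}$ ordered tuples of distinct indices and each gives $g_k$ by symmetry, is precisely the sum over mutually different indices stated in the proposition. I would note in passing that any order-$k$ pieces of $IF_k$ depending on fewer than $k$ arguments do not affect $E_{\Delta}IF_k$ because $\Delta$ has zero total mass ($\hat{E}L=1$); this is why these uncentered influence functions already satisfy parts 1--2 even though they would fail the centering of part 3.

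It then remains to bound the residual, which by the identity equals $\sum_{k=4}^{T}\binom{T}{k}E_{\Delta}g_{k}(K_1,\dots,K_k)$, by $O_p(n^{-2})$ uniformly over the feasible set $\hat{E}[\phi(L)]\le q/(2n)$. I would first convert the divergence constraint into an $L^2$ bound: by Assumption \ref{assu: phi_smoothness} every feasible $L$ satisfies $\phi(L_i)\le q/2$ pointwise (since $\phi\ge0$ and $\frac1n\phi(L_i)\le\hat{E}\phi(L)$), so each $L_i$ lies in $[c_0,C_0]$, and a second-order Taylor bound using $\phi(1)=\phi'(1)=0$, $\phi''>\delta$ gives $\phi(x)\ge\frac{\delta}{2}(x-1)^2$ there, whence $\hat{E}(L-1)^2\le\frac{2}{\delta}\hat{E}\phi(L)\le\frac{q}{\delta n}=O(1/n)$ uniformly in $L$. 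Rewriting $E_{\Delta}g_k=\hat{E}_{1:k}[\prod_{j=1}^{k}(L(Y_j)-1)\,g_k(Y_1,\dots,Y_k)]$ with $Y_1,\dots,Y_k\stackrel{\text{i.i.d.}}{\sim}\hat{P}$, Cauchy--Schwarz gives $|E_{\Delta}g_k|\le(\hat{E}_{1:k}\prod_{j}(L(Y_j)-1)^2)^{1/2}(\hat{E}_{1:k}g_k^2)^{1/2}=(\hat{E}(L-1)^2)^{k/2}(\hat{E}_{1:k}g_k^2)^{1/2}$, the last equality by independence of the $Y_j$. The second factor is free of $L$ and, by conditional Jensen and the domination $|h|\le\sum_t\Lambda_t$, is at most $\hat{E}h^2\le T\sum_t\hat{E}\Lambda_t^2=O_p(1)$. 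Hence $E_{\Delta}g_k=O_p(n^{-k/2})$, the $L$-free quantity $R_n:=\sum_{k=4}^T\binom{T}{k}(q/(\delta n))^{k/2}(\hat{E}_{1:k}g_k^2)^{1/2}$ dominates the residual and is $O_p(n^{-2})$, and part 1 is established.

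For part 2 the same domination gives $|g_k(x_1,\dots,x_k)|\le\sum_{j=1}^k\bar\Lambda(x_j)+C$ for a single-coordinate envelope $\bar\Lambda$ and a constant $C=O_p(1)$, so each $\hat{E}IF_k^m$ with $m\le5$ expands into empirical moments of the $\Lambda_t$; these are $O_p(1)$ because the exponential-moment hypothesis $E_0 e^{\theta\Lambda_t}<\infty$ renders all polynomial moments finite and the law of large numbers applies. The remaining nondegeneracy requirement $1/\widehat{Var}\,IF_1=O_p(1)$ is a consistency statement: since $IF_1=T g_1$ with $g_1$ the empirically estimated first-order influence function, I would show $\widehat{Var}\,IF_1=T^2\widehat{Var}\,g_1\to T^2\,\text{Var}_0\,g>0$ in probability, where $g(x)=E_0[h\mid X_1=x]$. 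This follows by applying the law of large numbers for $V$-statistics to $\hat{E}g_1^2$ (a $V$-statistic of degree $2T-1$) and to $\hat{E}g_1=\psi(\hat{P})$, whose probability limits are $E_0 g^2$ and $E_0 g$, and then invoking $\text{Var}_0 g>0$; the integrability needed is again supplied by the domination/separability hypothesis.

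I expect the main obstacle to be the uniform residual bound of the second step, since the $O_p(n^{-2})$ must hold simultaneously for every feasible $L$; the crux is the Cauchy--Schwarz decoupling that isolates an $L$-dependent factor governed entirely by the divergence constraint from an $L$-free factor governed by the moment conditions. A secondary difficulty is the variance consistency $\widehat{Var}\,g_1\to\text{Var}_0 g$, which requires the $V$-statistic law of large numbers rather than an elementary i.i.d. average, because the estimated influence function $g_1$ itself averages the kernel over the data.
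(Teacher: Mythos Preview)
Your proposal is correct and follows essentially the same route as the paper: both expand $(\hat P+\Delta)^{\otimes T}$ to obtain the exact finite von Mises identity, identify $IF_k$ as $\tfrac{T!}{(T-k)!}g_k$ via permutation invariance, and then bound each $E_\Delta g_k$ by combining the $L^2$ control on $L-1$ coming from the divergence constraint with moment bounds on $h$ supplied by the separability hypothesis. The only cosmetic differences are that you derive the bound $\hat E(L-1)^2\le q/(\delta n)$ directly from a Taylor expansion of $\phi$ (the paper invokes Lemma~\ref{lem: L_bound}), you use Cauchy--Schwarz where the paper points to the H\"older arguments of Lemmas~\ref{lem: integrate_moment}--\ref{lem: Integrate_moment_2}, and you spell out the $V$-statistic LLN for $\widehat{Var}\,IF_1\to T^2\operatorname{Var}_0 g$ more explicitly than the paper does.
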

As explained after Assumption \ref{assu: psi_expansion}, the influence functions given in Proposition \ref{prop: V_stat} could be easily converted to satisfy part 3 of Assumption \ref{assu: psi_expansion}.
%\begin{align*}
% & IF_{k}(X^{(1)},X^{(2)},\dots,X^{(k)};\hat{P})\\
%= & \sum_{a_{1},a_{2},\dots,a_{k}=1,\text{mutually %different}}^{T}\left(\prod_{j=1}^{k}\left(\hat{E}_{a_{j}}-E_{a_{j}}\right)\%right)[h(X_{1:T})|X_{a_{1}}=X^{(1)},\dots,X_{a_{k}}=X^{(k)}]
%\end{align*}

\paragraph*{Example 3: Optimization}
We consider $\psi(P)=\inf_{x}E_{P}[\ell(x;\xi)]$ as in \citet{duchi2016statistics}. Under smoothness assumptions on $\ell$, by a uniform convergence and Taylor expansion argument, we can show that
\begin{proposition}\label{prop: optimization}
Suppose the function class $\{\ell(x,\cdot):x\in\mathcal{X}\} $ is Glivenko-Cantelli with respect to $P_0$. Also suppose that under $P_0$, $E_{P_0}\ell(x,\xi) $ has a well seperated unique minimizer $x_{P_0}^* $ in the sense that for any $\delta_1>0$, there exists $\epsilon>0$ such that $\{x: E_{P_0}\ell(x,\xi)\leq E_{P_0}\ell(x_{P_0}^*,\xi) +\epsilon \}\subset \{x: \left\Vert x-x_{P_0}^*\right\Vert\leq \delta_1\} $. Suppose that $x_{P_0}^*$ is in the interior of $\mathcal{X}$, $\ell(x,\xi)$ is fourth-order continuously differentiable w.r.t. $x$ and there exists measurable functions $h(\xi)$
and $H(\xi)$ and $\delta>0$ such that
\[
h(\xi)\leq\inf_{\left\Vert x-x_{P_0}^{*}\right\Vert\leq\delta}\ell_{xx}(x,\xi)
\]

\[
H(\xi) \geq \sup_{\left\Vert x-x_{P_0}^{*}\right\Vert\leq\delta} \ell^2(x,\xi)
\]
\[
H(\xi)\geq\sup_{\left\Vert x-x_{P_0}^{*}\right\Vert\leq\delta,j=1,2,3,4}\left(\frac{\partial^j}{\partial x^j}\ell(x,\xi)\right)^2
\]
and 
\[
Eh(\xi)>0,EH(\xi)<\infty,\text{Var}_0 \ell(x^*_{P_0},\xi)>0.
\]
Then Assumption \ref{assu: psi_expansion} holds with the first two influence functions given by 
\[
IF_{1}(\xi;\hat{P})=\ell(x_{\hat{P}}^{*},\xi)-E_{\hat{P}}\ell(x_{\hat{P}}^{*},\xi)
\]
and 
\[
IF_{2}(\xi_{1},\xi_{2};\hat{P})=-\frac{2\ell_{x}(x_{\hat{P}}^{*},\xi_{1})\ell_{x}(x_{\hat{P}}^{*},\xi_{2})}{E_{\hat{P}}\ell_{xx}(x_{\hat{P}}^{*},\xi)}.
\]
\end{proposition}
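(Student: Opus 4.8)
The plan is to exploit the M-estimation structure of $\psi(P)=\inf_x E_P[\ell(x;\xi)]$: writing $x_P^*$ for the inner minimizer, we have $\psi(P)=E_P[\ell(x_P^*,\xi)]$, so the entire expansion is driven by how $x_{\hat PL}^*$ responds to the perturbation $\Delta:=\hat PL-\hat P$. The envelope theorem supplies the first-order term, while implicit differentiation of the first-order optimality condition $E_{\hat PL}[\ell_x(x_{\hat PL}^*,\xi)]=0$ supplies the higher-order terms; substituting the resulting expansion of $x_{\hat PL}^*$ back into the value and collecting powers of $\Delta$ yields the influence functions required by Assumption \ref{assu: psi_expansion}.

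First I would establish a uniform localization result: for every $L$ feasible for \eqref{eq: DRO_L_formlation}, the minimizer $x_{\hat PL}^*$ exists, is unique for large $n$, and lies in a shrinking neighborhood of $x_{P_0}^*$. This combines (i) the Glivenko--Cantelli property and the well-separated-minimizer condition, which give $x_{\hat P}^*\to x_{P_0}^*$, with (ii) the bound $|E_{\hat PL}[\ell(x,\xi)]-E_{\hat P}[\ell(x,\xi)]|=|\hat E[(L-1)\ell(x,\xi)]|\le\sqrt{\hat E[(L-1)^2]}\,\sqrt{\hat E[\ell(x,\xi)^2]}=O_p(n^{-1/2})$, obtained from the divergence constraint (which forces $\hat E[(L-1)^2]=O_p(n^{-1})$ via $\phi''(1)>0$) together with $H(\xi)\ge\sup_x\ell^2(x,\xi)$ and $EH(\xi)<\infty$. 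The condition $Eh(\xi)>0$ then guarantees that $A:=E_{\hat P}[\ell_{xx}(x_{\hat P}^*,\xi)]$ and its perturbed counterpart $E_{\hat PL}[\ell_{xx}(\cdot,\xi)]$ are bounded away from zero uniformly over the ball, so the implicit function theorem applies uniformly.

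Next I would run the expansion. The envelope identity gives directly $IF_1(\xi;\hat P)=\ell(x_{\hat P}^*,\xi)-E_{\hat P}\ell(x_{\hat P}^*,\xi)$. Differentiating the optimality condition yields $x_{\hat PL}^*-x_{\hat P}^*=-A^{-1}E_\Delta[\ell_x(x_{\hat P}^*,\xi)]+(\text{higher order in }\Delta)$, and substituting this into $E_{\hat PL}[\ell(x_{\hat PL}^*,\xi)]$ and Taylor-expanding $\ell$ in $x$ around $x_{\hat P}^*$ produces, at second order, a term proportional to $(E_\Delta[\ell_x(x_{\hat P}^*,\xi)])^2/A$, from which one reads off $IF_2$ as a symmetric kernel $\propto \ell_x(x_{\hat P}^*,\xi_1)\ell_x(x_{\hat P}^*,\xi_2)/E_{\hat P}\ell_{xx}(x_{\hat P}^*,\xi)$; the third-order term defines a valid $IF_3$ built from $\ell_x,\ell_{xx},\ell_{xxx}$, which need not be written explicitly. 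Since $E_{\hat P}[\ell_x(x_{\hat P}^*,\xi)]=0$ by optimality, the kernels $IF_2$ and $IF_3$ are automatically marginally centered under $\hat P$, so part 3 of Assumption \ref{assu: psi_expansion} holds after the routine symmetrization described following the assumption.

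The main obstacle, and the step I would budget the most care for, is the uniform remainder bound $R_n=O_p(n^{-2})$ over all feasible $L$ simultaneously. The key is that each factor $E_\Delta[\ell^{(k)}(x_{\hat P}^*,\xi)]$ is $O_p(n^{-1/2})$ (Cauchy--Schwarz against $\hat E[(L-1)^2]=O_p(n^{-1})$, with the second moments of the derivatives controlled by $H$), while $x_{\hat PL}^*-x_{\hat P}^*=O_p(n^{-1/2})$ uniformly; bookkeeping then shows that every term of total order at least four is $O_p(n^{-2})$, the leading such contribution being the fourth-order Taylor remainder of $\ell$ in $x$. This is precisely why fourth-order differentiability and the domination $H(\xi)\ge\sup_x(\partial^j\ell/\partial x^j)^2$ for $j\le4$ are assumed. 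The remaining verifications, namely the moment bounds $\hat E\,IF_i^m=O_p(1)$ for $m\le5$ and the non-degeneracy $1/\widehat{Var}\,IF_1=O_p(1)$, then follow from the integrability of $\ell$ and its derivatives and from the explicit assumption $\mathrm{Var}_0\,\ell(x_{P_0}^*,\xi)>0$.
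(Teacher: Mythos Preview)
Your proposal is correct and follows essentially the same route as the paper: localize the minimizer via the Glivenko--Cantelli and divergence-ball bounds, expand $x_{\hat P L}^*-x_{\hat P}^*$ from the first-order condition by Taylor's theorem, substitute back into the value, and control the remainder using the envelope $H$ together with $\hat E[(L-1)^2]=O_p(n^{-1})$. The only cosmetic difference is that the paper writes the expansion of $x_{\hat P L}^*-x_{\hat P}^*$ as an explicit two-pass iteration (first establishing $O_p(n^{-1/2})$, then refining to residual $O_p(n^{-3/2})$, and noting a third pass reaches $O_p(n^{-2})$), whereas you phrase the same computation in the language of implicit differentiation.
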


The proofs of Propositions \ref{prop: function_of_mean}, \ref{prop: V_stat} and \ref{prop: optimization} are presented in Appendix \ref{sec:verification proofs}.

%In this case,
%$\psi(\hat{P}L)-\psi(\hat{P})=\inf_{x}E_{\hat{P}L}[\ell(x;\xi)]-\inf_{x}E_{\hat{P}}[\ell(x;\xi)]$.
%The expansion for $\psi(P)$ could be derived from expansion for the
%optimizer and we actually have that
%\[
%\psi(\hat{P}L)-\psi(\hat{P})=E_{\hat{P}L}\ell(x_{\hat{P}L}^{*};\xi)-E_{\hat{P}}\ell(x_{\hat{P}}^{*};\xi)
%\]
%The first order condition is given by 

%We will investigate an expansion for $x_{\hat{P}L}^{*}$ first. To
%start, from \citet{duchi2016statistics}, we see that if $\mathcal{X}$
%is compact and the functions $\ell(\cdot,\xi)$ is $M(\xi)$-Lipschitz
%and $EM(\xi)^{2}<\infty$, and $x_{P}^{*}$ is unique, then we have
%that $x_{\hat{P}L}^{*}\rightarrow x_{P}^{*}$ and $x_{\hat{P}}^{*}\rightarrow x_{P}^{*}$
%a.s.. To further characterize the difference between $x_{\hat{P}L}^{*}$
%and $x_{\hat{P}}^{*}$, suppose that for each value of $\xi$, $\ell(x;\xi)$
%is a smooth function of $x$, and $x_{P}^{*}$ lies in the interior
%of $\mathcal{X}$, then we have that when $n$ is large, $x_{\hat{P}L}^{*}$
%is the solution to 
%\[
%E_{\hat{P}L}\left[\ell_{x}(x;\xi)\right]=0
%\]

%

\section{Coverage Error Reduction}\label{sec: Bartlett}
% \subsection{Deriving the formula}
We present our main results on coverage error reduction and provide a roadmap of our argument, leaving the technical details to Section \ref{subsec: Bartlett_technical}. For simplicity, we write the expansion for the maximum value in Theorem \ref{thm: DRO_expansion} as
\[
\psi_{max}=\psi(\hat{P})+\sum_{k=1}^{3}n^{-k/2}\left(\frac{q}{\phi^{\prime\prime}(1)}\right)^{k/2}\hat{C}_{k}+O_{p}(n^{-2}).
\]
Here, the definition of $\hat{C}_k$ can be seen by comparing the above with Theorem \ref{thm: DRO_expansion}. Similarly, we have an expansion for the minimum value:
\[
\psi_{min}=\psi(\hat{P})+\sum_{k=1}^{3}n^{-k/2}\left(\frac{-q}{\phi^{\prime\prime}(1)}\right)^{k/2}\hat{C}_{k}+O_{p}(n^{-2}).
\]

%We want to remark that, with our expansion scheme, we are able to get expansion up to any order for the optimal values of DRO. In Theorem \ref{thm: DRO_expansion}, we only included the first two orders since this is what we need to compute the Bartlett correction. Moreover, with a stronger set of assumptions, we can also enhance the residual from the stochastic bound $O_p(n^{-2})$ to a residual with tail probability bound, e.g., $P(X>n^{-2+\delta})=O(n^{-2})$.  
Hence letting 
\begin{equation}\label{eq: g_expression}
g(x) =  \sum_{k=1}^{3} \hat{C}_k x^k
\end{equation}
we have that $\psi_{\max} =\psi(\hat{P}) + g \left(\sqrt{\frac{q}{n\phi^{\prime\prime}(1)}} \right)+O_p(n^{-2})$ and $\psi_{\min}=\psi(\hat{P}) +g \left(-\sqrt{\frac{q}{n\phi^{\prime\prime}(1)}} \right)+O_p(n^{-2})$.

Consequently, the coverage probability of the CI constructed as $(\psi_{min},\psi_{max})$ can be approximated by
\[
P\left(\psi(P_0)\in \left[\psi(\hat{P}) + g \left(-\sqrt{\frac{q}{n\phi^{\prime\prime}(1)}} \right), \psi(\hat{P}) +g \left(\sqrt{\frac{q}{n\phi^{\prime\prime}(1)}} \right)\right]\right)+O(n^{-3/2}).
\]
By inverting the $g$ function (it can be argued that $g$ is monotone in a small neighborhood of 0 with high probability), the above can in turn be approximated by
\[
P\left(\sqrt{n}g^{-1}\left(\psi(P_0)-\psi(\hat{P})\right)\in \left[-\sqrt{\frac{q}{\phi^{\prime\prime}(1)}}, \sqrt{\frac{q}{\phi^{\prime\prime}(1)}}\right]\right)+O(n^{-3/2}).
\]

Based on the expansion of $g$ in \eqref{eq: g_expression}, we can get an expansion for $g^{-1}$ around 0. In addition, we have an expansion for $\psi(P_0)-\psi(\hat{P})$ based on the influence functions of $\psi$. Putting these together, we obtain an expansion for  $\sqrt{n}g^{-1}\left(\psi(P_0)-\psi(\hat{P})\right)$, which enables us to study its distribution via Edgeworth expansion. As a result, we can show the following theorem:

\begin{thm}\label{thm: Bartlett}
Under some regularity conditions regarding the validity of Edgeworth expansion, the tail probability in the expansion in Assumption \ref{assu: psi_expansion}, and the expansion of $\psi(\hat{P})$ around $P_0$ (all these are explicitly given in Theorem \ref{thm: Bartlett_full}), we have the following approximation to the coverage probability of the CI $[\psi_{min},\psi_{max}]$:
\begin{equation}\label{eq: coverage_expansion}
P(\psi_{\min}\leq\psi(P_0)\leq\psi_{\max}) =  P\left(\chi^{2}_1\leq\frac{q}{\phi^{\prime\prime}(1)}\right)+n^{-1}A\left(\sqrt{\frac{q}{\phi^{\prime\prime}(1)}}\right)\phi\left(\sqrt{\frac{q}{\phi^{\prime\prime}(1)}}\right)+O(n^{-3/2})
\end{equation}
where 
\begin{align*}
A(x)= & -\frac{(2\phi^{\prime\prime}+\phi^{\prime\prime\prime})^{2}x^{5}\gamma^{2}}{36\phi^{\prime\prime2}\kappa_{2}^{3}}\\
 & -\frac{x^{3}}{36\phi^{\prime\prime2}\kappa_{2}^{3}}\times\left(\begin{array}{c}
4(\phi^{\prime\prime}+\phi^{\prime\prime\prime})(2\phi^{\prime\prime}+\phi^{\prime\prime\prime})\gamma^{2}+3(-2\phi^{\prime\prime2}-4\phi^{\prime\prime}\phi^{\prime\prime\prime}-3\phi^{\prime\prime\prime2}+\phi^{\prime\prime}\phi^{(4)})\kappa_{2}\mu_{4}\\
+9(2\phi^{\prime\prime}+\phi^{\prime\prime\prime})^{2}\kappa_{2}^{3}+6\phi^{\prime\prime}(2\phi^{\prime\prime}+\phi^{\prime\prime\prime})\gamma\mu_{2,c}-6\phi^{\prime\prime}(2\phi^{\prime\prime}+\phi^{\prime\prime\prime})\gamma\kappa_{2}\mu_{2,d}
\end{array}\right)\\
 & -\frac{x}{36\kappa_{2}^{3}}\times\left(\begin{array}{c}
-12\gamma^{2}+18\kappa_{2}\mu_{4}+36\kappa_{2}\left(\mu_{2,a}+2\mu_{2,b}\right)-36\gamma\mu_{2,c}\\
-9\mu_{2,c}^{2}-18\kappa_{2}\mu_{2,c}\mu_{2,d}+9\kappa_{2}^{2}\left(-2\mu_{2,2}+\mu_{2,d}^{2}-4\mu_{1,2,d}\right)
\end{array}\right).
\end{align*}
Here,
\[
\mu_{1,2,d}=EIF_{1}(X;P_{0})IF_{2}(X,X;P_{0}),
\]
\[
\mu_{2,d}=EIF_2(X,X;P_0),
\]
%\[
%\mu_{2,c,2}=E\left(IF_{1}(X;P_{0})IF_{1}(Y;P_{0})IF_{2}(X,Y;P_{0})\right)^{2}
%\]
%\[
%\mu_{(1,3),d}:=EIF_{1}(X;P_{0})IF_{3}(X,Y,Y;P_{0})
%\]
\[
\mu_{2,2}=E[IF_{2}(X,Y;P_{0})]^{2}
\]
and the definitions of $\kappa_2,\gamma,\mu_{2,a},\mu_{2,b},\mu_{2,c},\mu_4$ can be found in Theorem \ref{thm: DRO_expansion},
with the hat notation removed and $\hat{P}$ replaced with $P_{0}$. $E$ means the expectation for $X,Y\stackrel{i.i.d.}{\sim}P_0$. In the expression of $A(x)$, all of $\phi^{\prime\prime}, \phi^{\prime\prime\prime}$, and $\phi^{(4)} $ are evaluated at 1. 
% ($E$ MEANS THE EXPECTATION FOR $X,Y\stackrel{i.i.d.}{\sim}P_0$? WE HAVEN'T DEFINED IT SEEMS.. ALSO, WE SHOULD EXPLAIN THAT WE SKIP THE "(1)" IN $\phi$?)
\end{thm}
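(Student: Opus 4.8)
The plan is to collapse the two-sided DRO coverage into the distribution of a single scalar pivot $T_n:=\sqrt{n}\,g^{-1}\!\big(\psi(P_0)-\psi(\hat P)\big)$ (with $c:=\sqrt{q/\phi''(1)}$), expand $T_n$ as a smooth functional of empirical averages under $P_0$, and then extract the coverage correction from its Edgeworth expansion, using the symmetry of the interval to annihilate the $O(n^{-1/2})$ term. First I would justify the reduction $P(\psi_{\min}\le\psi(P_0)\le\psi_{\max})=P(T_n\in[-c,c])+O(n^{-3/2})$ sketched before the statement. Since the leading coefficient of $g$ in \eqref{eq: g_expression} is $\hat C_1=\hat\kappa_2^{1/2}$, which is bounded away from $0$ with high probability by Assumption \ref{assu: psi_expansion}, $g$ is strictly increasing on a shrinking neighborhood of $0$, so $g^{-1}$ is well defined and the containment $g(-c/\sqrt n)\le\psi(P_0)-\psi(\hat P)\le g(c/\sqrt n)$ is equivalent to $T_n\in[-c,c]$. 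The $O_p(n^{-2})$ remainders from Theorem \ref{thm: DRO_expansion} perturb the endpoints of this event, and I would show by an anti-concentration argument that, because $T_n$ admits a density with bounded derivative near $\pm c$ (a by-product of the Edgeworth expansion below), such endpoint shifts alter the probability content by only $O(n^{-3/2})$.

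Next I would derive an explicit stochastic expansion $T_n=S_n+n^{-1/2}U_n+n^{-1}V_n+O_p(n^{-3/2})$ with $S_n\Rightarrow N(0,1)$, where $S_n,U_n,V_n$ are polynomials in $V$-statistic-type averages of the $P_0$-influence functions. This fuses three expansions: (i) inverting the cubic $g(x)=\hat C_1x+\hat C_2x^2+\hat C_3x^3$ about $0$, giving $g^{-1}(y)=\hat C_1^{-1}y-\hat C_1^{-3}\hat C_2y^2+O(y^3)$; (ii) the von Mises expansion of $\psi(P_0)-\psi(\hat P)$ in the $P_0$-influence functions, whose second-order term contributes the diagonal $IF_2(X,X;P_0)$ and hence the moments $\mu_{2,d}$ and $\mu_{1,2,d}$; and (iii) the conversion of the hatted coefficients $\hat\kappa_2,\hat\gamma,\hat C_2,\dots$ (built from $IF_j(\cdot;\hat P)$) into their $P_0$ analogues, by Taylor-expanding $IF_j(\cdot;\hat P)$ about $IF_j(\cdot;P_0)$ along the direction $\hat P-P_0$, whose first variation couples $IF_1$ to $IF_2$. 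It is this last conversion that generates precisely the extra moments $\mu_{2,2},\mu_{2,d},\mu_{1,2,d}$ present in $A$ but absent from Theorem \ref{thm: DRO_expansion}; for instance, squaring the correction to $IF_1(\cdot;\hat P)$ inside $\hat\kappa_2$ produces the $E[IF_2(X,Y;P_0)]^2=\mu_{2,2}$ term. The outcome is that $T_n$ is a smooth function of finitely many sample means, hence amenable to a standard Edgeworth treatment.

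I would then invoke the assumed validity of the Edgeworth expansion (among the regularity hypotheses deferred to Theorem \ref{thm: Bartlett_full}): $P(T_n\le x)=\Phi(x)-\phi(x)\big(n^{-1/2}p_1(x)+n^{-1}p_2(x)\big)+O(n^{-3/2})$, with $\Phi$ the standard normal CDF, $p_1$ an even polynomial of degree $2$, and $p_2$ an odd polynomial of degree $5$, their coefficients being cumulant functionals of $T_n$. Writing the target as $P(T_n\le c)-P(T_n\le -c)$ and using $\phi(-c)=\phi(c)$ together with $\Phi(c)-\Phi(-c)=P(\chi^2_1\le c^2)$, the even part $p_1$ cancels---this is the mechanism that kills the $O(n^{-1/2})$ coverage error---while the odd part doubles, yielding
\[
P(T_n\in[-c,c])=P\!\left(\chi^2_1\le c^2\right)-2n^{-1}\phi(c)\,p_2(c)+O(n^{-3/2}),
\]
so that $A(c)=-2p_2(c)$. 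Since $p_2$ is an odd quintic, this matches exactly the $x^5$, $x^3$, $x$ block structure in the definition of $A(x)$, and substituting $c=\sqrt{q/\phi''(1)}$ recovers \eqref{eq: coverage_expansion}.

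The main obstacle is the explicit computation of $p_2$, equivalently the third and fourth cumulants of $T_n$ to orders $n^{-1/2}$ and $n^{-1}$, and their reduction to the listed influence-function moments. This demands (a) a careful bookkeeping of the many contractions of products of $IF_1,IF_2,IF_3$ that arise on cubing and fourth-powering the expansion of $T_n$, retaining only the non-negligible connected terms under $P_0$; and (b) faithfully carrying the hat-to-$P_0$ conversion terms from the second step, which are easy to overlook and are responsible for the $\mu_{2,2},\mu_{2,d},\mu_{1,2,d}$ contributions. The algebra is lengthy but mechanical once the expansion of $T_n$ is fixed; I would grade the contributions by their power of $c$ (equivalently by Hermite degree) so as to match the three blocks of $A(x)$ term by term, and relegate the full symbolic computation to Section \ref{subsec: Bartlett_technical}.
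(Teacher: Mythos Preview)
Your proposal is correct and follows essentially the same route as the paper: define the pivot $W_n=\sqrt{n}\,g^{-1}(\psi(P_0)-\psi(\hat P))$, convert the hatted coefficients $\hat C_k$ to their $P_0$ analogues via the influence-function expansions of Theorem~\ref{thm: Bartlett_full}, apply the assumed Edgeworth expansion, and use the parity of $p_1,p_2$ to cancel the $n^{-1/2}$ term and identify $A=2p_2$. Two minor points of divergence: the paper controls the $O_p$ remainders via the $O_{p,\delta}$ tail-probability device of Assumption~\ref{assu: stronger_assu} rather than anti-concentration, and it does not claim that $W_n$ is a smooth function of sample means (this fails for general von Mises functionals), which is precisely why Edgeworth validity is imposed as Assumption~\ref{assu: Edgeworth} rather than derived.
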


Based on the leading term on the RHS of \eqref{eq: coverage_expansion}, we can derive the choice of $q$ which gives an asymptotically exact CI.

\begin{corollary}\label{cor: exact_CI}
Under the same condition as in Theorem \ref{thm: Bartlett}, if $q$ is chosen as $q_0:=\phi^{\prime\prime}(1)\chi_{1;1-\alpha}^{2}$, then 
\[
P(\psi_{\min}\leq\psi(P_0)\leq\psi_{\max}) = \alpha + O(n^{-1}).
\]
Here, $\psi_{\min}$ and $\psi_{\max}$ are the minimum and maximum values of \eqref{eq: DRO_L_formlation} respectively, and $\chi^2_{1;1-\alpha}$ is the upper $(1-\alpha)$-quantile of $\chi^2_1$, i.e., $P(\chi^2_1\leq \chi^2_{1;1-\alpha})=\alpha$..
\end{corollary}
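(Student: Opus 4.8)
The plan is to derive this corollary as an immediate specialization of the coverage expansion \eqref{eq: coverage_expansion} from Theorem \ref{thm: Bartlett}, obtained by fixing the free parameter $q$ at the value $q_0 = \phi^{\prime\prime}(1)\chi^2_{1;1-\alpha}$. First I would substitute $q = q_0$ into the arguments on the right-hand side of \eqref{eq: coverage_expansion}. The defining feature of this choice is that it cancels the factor $\phi^{\prime\prime}(1)$, giving $q_0/\phi^{\prime\prime}(1) = \chi^2_{1;1-\alpha}$. Consequently the leading term $P\left(\chi^2_1 \leq q_0/\phi^{\prime\prime}(1)\right)$ reduces to $P\left(\chi^2_1 \leq \chi^2_{1;1-\alpha}\right)$, which equals $\alpha$ by the defining property of the upper $(1-\alpha)$-quantile recalled in the statement.

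It remains to argue that the two remaining terms collapse into the claimed $O(n^{-1})$ error. After substitution, the Edgeworth correction term becomes $n^{-1} A\left(\sqrt{\chi^2_{1;1-\alpha}}\right)\phi\left(\sqrt{\chi^2_{1;1-\alpha}}\right)$, where $\phi$ denotes the standard normal density factor of \eqref{eq: coverage_expansion}. The crucial point is that, once the confidence level $\alpha$ is fixed, $q_0$ and hence the argument $\sqrt{\chi^2_{1;1-\alpha}}$ are deterministic constants, so both the polynomial $A$ and the density $\phi$ are evaluated at a single fixed point rather than at a random or vanishing argument. Invoking the regularity conditions inherited from Theorem \ref{thm: Bartlett}---in particular the finiteness of the moment quantities $\kappa_2, \gamma, \mu_4, \mu_{2,a}, \mu_{2,b}, \mu_{2,c}, \mu_{2,d}, \mu_{2,2}, \mu_{1,2,d}$ together with the nondegeneracy $\kappa_2 > 0$, which keeps the powers of $\kappa_2$ in the denominators of $A$ bounded away from zero---the constant $A\left(\sqrt{\chi^2_{1;1-\alpha}}\right)$ is finite. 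Hence the correction term is genuinely $O(n^{-1})$, and the trailing $O(n^{-3/2})$ residual is absorbed into it.

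Combining the two observations yields $P(\psi_{\min} \leq \psi(P_0) \leq \psi_{\max}) = \alpha + O(n^{-1})$, which is the assertion. Because the entire argument is a direct evaluation of \eqref{eq: coverage_expansion} at a fixed $q$, there is no genuine analytical obstacle; the only step requiring care is confirming that the coefficient multiplying $n^{-1}$ is finite, which is exactly where the moment and nondegeneracy hypotheses of Theorem \ref{thm: Bartlett} enter. It is worth emphasizing the conceptual reading: this corollary recovers the first-order ``asymptotically exact'' calibration of divergence DRO, and the generally nonzero $O(n^{-1})$ term isolated here is precisely the quantity that the subsequent Bartlett-type correction will eliminate---by perturbing $q$ away from $q_0$ so as to cancel $A\left(\sqrt{\chi^2_{1;1-\alpha}}\right)\phi\left(\sqrt{\chi^2_{1;1-\alpha}}\right)$---thereby driving the coverage error down to $O(n^{-3/2})$ as targeted in \eqref{eq: small_error}.
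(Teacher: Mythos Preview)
Your proposal is correct and matches the paper's approach: the corollary is presented there as an immediate consequence of substituting $q=q_0$ into the coverage expansion \eqref{eq: coverage_expansion}, so that the leading term becomes $P(\chi^2_1\leq \chi^2_{1;1-\alpha})=\alpha$ and the $n^{-1}$ correction together with the $O(n^{-3/2})$ residual are absorbed into $O(n^{-1})$. Your added justification that $A(\sqrt{\chi^2_{1;1-\alpha}})$ is a finite constant under the moment and nondegeneracy hypotheses is a reasonable elaboration of what the paper leaves implicit.
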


Corollary \ref{cor: exact_CI} is consistent with the result in \cite{duchi2016statistics}. Note that this choice of $q$ does not depend on the underlying distribution, which reveals a pivotal property of DRO.

Now we consider a higher-order correction based on the $n^{-1}$ order term in \eqref{eq: coverage_expansion}, which gives the desired Bartlett correction formula. 
\begin{corollary}\label{cor: Bartlett_formula}
Under the same condition as in Theorem \ref{thm: Bartlett}, if $q$ is chosen as
\begin{equation}\label{eq: Bartlett_formula}
q = \phi^{\prime\prime}(1){\chi_{1;1-\alpha}^{2}}\left(1-\frac{1}{n}\frac{A(\sqrt{\chi_{1;1-\alpha}^{2}})}{\sqrt{\chi_{1;1-\alpha}^{2}}}\right),
\end{equation} 
then
\[
P(\psi_{\min}\leq\psi(P_0)\leq\psi_{\max}) = \alpha + O(n^{-3/2}).
\]
Here, $\psi_{\min}$ and $\psi_{\max}$ are the minimum and maximum values of \eqref{eq: DRO_L_formlation} respectively. 
% (IT MAY BE CONFUSING THAT $\psi_{min},\psi_{max}$ SOMETIMES HAVE $\eta$ GIVEN BY $q/(2n)$, WHICH WE ONLY IMPLICITLY MENTION AT THE BEGINNING OF THIS SECTION, AND SOMETIMES GIVEN BY \eqref{eq: Bartlett_formula})
\end{corollary}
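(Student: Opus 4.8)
The plan is to treat Corollary \ref{cor: Bartlett_formula} as an almost immediate consequence of the two-term coverage expansion \eqref{eq: coverage_expansion} from Theorem \ref{thm: Bartlett}, combined with a first-order perturbation of the ball-size parameter $q$ around the asymptotically exact choice $q_0=\phi''(1)\chi^2_{1;1-\alpha}$ of Corollary \ref{cor: exact_CI}. Writing $c:=\sqrt{\chi^2_{1;1-\alpha}}$ and $t:=q/\phi''(1)$, the proposed $q$ in \eqref{eq: Bartlett_formula} corresponds to
\[
t=c^{2}\left(1-\frac{1}{n}\frac{A(c)}{c}\right)=c^{2}-\frac{cA(c)}{n}.
\]
The entire argument then reduces to showing that, with this choice, the $O(n^{-1})$ term generated by the leading probability $P(\chi^2_1\leq t)$ exactly cancels the explicit $n^{-1}$ correction term appearing in \eqref{eq: coverage_expansion}.

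First I would expand the leading term. Using $P(\chi^2_1\leq t)=2\Phi(\sqrt{t})-1$, where $\Phi$ is the standard normal CDF and $\phi$ denotes its density (the same $\phi$ that appears multiplied on the right of \eqref{eq: coverage_expansion}), set $x:=\sqrt{t}$. Since $t=c^{2}-cA(c)/n$, a square-root expansion gives
\[
x=\sqrt{t}=c\sqrt{1-\frac{A(c)}{cn}}=c-\frac{A(c)}{2n}+O(n^{-2}).
\]
Taylor expanding $2\Phi(x)-1$ about $x=c$, using $\tfrac{d}{dx}(2\Phi(x)-1)=2\phi(x)$ and the defining identity $2\Phi(c)-1=P(\chi^2_1\leq c^{2})=\alpha$, yields
\[
P(\chi^2_1\leq t)=\alpha-\frac{\phi(c)A(c)}{n}+O(n^{-2}).
\]

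Next I would treat the explicit correction term. Because $\sqrt{t}=c+O(n^{-1})$ and this quantity enters \eqref{eq: coverage_expansion} already premultiplied by $n^{-1}$, continuity of $A$ and $\phi$ gives $n^{-1}A(\sqrt{t})\phi(\sqrt{t})=A(c)\phi(c)/n+O(n^{-2})$, so only the leading evaluation at $c$ matters. Adding the two contributions, the $O(n^{-1})$ terms $-\phi(c)A(c)/n$ and $+A(c)\phi(c)/n$ cancel identically, leaving $P(\psi_{\min}\leq\psi(P_0)\leq\psi_{\max})=\alpha+O(n^{-3/2})$, which is the claim. I would close by noting that the prescribed $q$ is strictly positive for all sufficiently large $n$, since $A(c)/c$ is a fixed finite constant, so the DRO \eqref{eq: DRO_L_formlation} remains well-posed.

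The computation here is routine once Theorem \ref{thm: Bartlett} is available: the genuine content—validity of the Edgeworth expansion and the derivation of the coefficient function $A$—has already been absorbed upstream, and the only real ingredient in this corollary is the designed cancellation. Consequently the main obstacle is not in the algebra but in justifying that the $O(n^{-3/2})$ remainder in \eqref{eq: coverage_expansion} is \emph{uniform} over a neighborhood of $q_0$, so that perturbing $q$ by an $O(n^{-1})$ amount does not degrade the error order. I would address this by confirming that the regularity conditions of Theorem \ref{thm: Bartlett_full} continue to hold on an interval of $q$-values containing both $q_0$ and the corrected value, rather than merely at the single point $q_0$, and that the remainder bound is locally Lipschitz (or at least bounded) in $q$ there.
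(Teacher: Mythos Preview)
Your proposal is correct and follows essentially the same approach as the paper: perturb $q$ around $q_0=\phi''(1)\chi^2_{1;1-\alpha}$ by an $O(n^{-1})$ amount, Taylor expand the leading $\chi^2_1$ probability (the paper does this via the $\chi^2_1$ density $\nu(\sqrt{u})/\sqrt{u}$, you via $P(\chi^2_1\le t)=2\Phi(\sqrt t)-1$, which is the same computation), and observe that the resulting $n^{-1}$ term cancels the explicit correction $n^{-1}A(c)\phi(c)$ in \eqref{eq: coverage_expansion}. Your remark about needing the $O(n^{-3/2})$ remainder in \eqref{eq: coverage_expansion} to hold uniformly over a neighborhood of $q_0$ is a legitimate technical point that the paper leaves implicit.
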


\begin{proof}
With a bit of calculation, it is not hard to show that for $q_0$ given in Corollary \ref{cor: exact_CI} and any $c$,
\[
P\left(\chi^{2}_1\leq q_0\left(1+\frac{c}{n}\right)\right)
%=P(\sqrt{q}(1-\frac{c}{2n})\leq Z\leq\sqrt{q}(1+\frac{c}{2n}))+O(n^{-2})
=P(\chi^{2}_1\leq q_0)+\sqrt{q_0}\frac{c}{n}\nu(\sqrt{q_0})+O(n^{-2})
\]
Here $\nu(\cdot)$ is the standard normal density. Thus in \eqref{eq: coverage_expansion}, if we set $q=q_0(1+c/n)$, we would have the coverage probability given by 
\[
P\left(\chi^{2}\leq\frac{q_0}{\phi^{\prime\prime}(1)}\right)+\sqrt{\frac{q_0}{\phi^{\prime\prime}(1)}}\frac{c}{n}\nu\left(\sqrt{\frac{q_0}{\phi^{\prime\prime}(1)}}\right)+n^{-1}A(\sqrt{\frac{q_0}{\phi^{\prime\prime}(1)}})\nu(\sqrt{\frac{q_0}{\phi^{\prime\prime}(1)}})+O(n^{-3/2}).
\]
To make the above expression equal to $\alpha+O(n^{-3/2}) $, we can let $c=-\frac{A(\sqrt{\chi_{1;1-\alpha}^{2}})}{\sqrt{\chi_{1;1-\alpha}^{2}}}$. This gives \eqref{eq: Bartlett_formula}. %is equivalent to choosing $\eta$ as 
%\[
%\eta=\frac{q(1+\frac{c}{n})}{2n}=\phi^{\prime\prime}(1)\frac{\chi_{1;1-\alpha}^{2}}{2n}\left(1-\frac{1}{n}\frac{A(\sqrt{\chi_{1;1-\alpha}^{2}})}{\sqrt{\chi_{1;1-\alpha}^{2}}}\right).
%\]
\end{proof}

We notice that $A$ is a polynomial that depends on the expectations of (products of) influence functions. We can estimate $A$ using its sampled approximation, as this would only introduce an error of order $O(n^{-3/2})$ since $A$ is multiplied by $n^{-1}$ in our expression.
%We also
%observe that, function $A(x)$ is independent of third order influence
%functions of $\psi$.

%\subsection{Properties of DRO/EL as a way to construct confidence intervals}
%Our next step is to study a Bartlett correction based on the result in Theorem \ref{thm: DRO_expansion}. For our Bartlett correction formula, we only need the expressions for $\hat{C}_k,k=1,2,3$ but for our discussion, suppose that we have 

In the following two sections, we present further implications of our results.

% (I THINK WE NEED TO SAY,  E POINT IN THIS SECTION, HOW TO SET $q$ FIRST IF WE JUST WANT ASYMPTOTIC EXACTNESS..)

\section{Higher-Order Self-Normalizing Property}\label{sec: self normalizing}
One notable property of the corrected choice of $q$ given by \eqref{eq: Bartlett_formula} is that it does not depend on the third-order influence function. To see why this is nontrivial, we compare with the higher-order delta method described as follows.

Since $A_n:=\frac{\sqrt{n}\left(\psi(\hat{P})-\psi(P_{0})\right)}{\left(\widehat{Var}[IF(X;\hat{P})]\right)^{1/2}}$ is asymptotically normal, we may study its distribution via Edgeworth expansion and then try to find $U_{1}$ and $U_{2}$ such that (WLOG, suppose that $\phi^{\prime\prime}(1)=1$ so that $q_0=\chi^2_{1;1-\alpha}$)
% (MAY BE CONFUSING AS $q$ IS USED BEFORE.. NOT SURE IF THIS MEANS THE SAME THING..)
\[
P\left(-\sqrt{q_0}-U_{1}\leq A_n\leq\sqrt{q_0}+U_{2}\right)=\alpha+O(n^{-3/2}).
\]
Note that once we have the above relation, we can get the following CI for $\psi(P_0)$ with a higher-order coverage error than applying the standard delta method:
\[
\left[\psi(\hat{P})-n^{-1/2}\left(\widehat{Var}[IF(X;\hat{P})]\right)^{1/2}(\sqrt{q_0}+U_2),\psi(\hat{P})+n^{-1/2}\left(\widehat{Var}[IF(X;\hat{P})]\right)^{1/2}(\sqrt{q_0}+U_1)\right].
\]

%In the DRO approach, we are actually tring to find $D$ such that
%(suppose $\phi^{\prime\prime}(1)=1$)

%\begin{align*}
%& P\left(\sqrt{n}\frac{\psi_{\min}(q(1+\frac{D}{n}))-\psi(P_{0})}{\left(\widehat{Var}[IF(X;\hat{P})]\right)^{1/2}}\leq\frac{\sqrt{n}\left(\psi(\hat{P})-\psi(P_{0})\right)}{\left(\widehat{Var}[IF(X;\hat{P})]\right)^{1/2}}\leq\sqrt{n}\frac{\psi_{\min}(q(1+\frac{D}{n}))-\psi(P_{0})}{\left(\widehat{Var}[IF(X;\hat{P})]\right)^{1/2}}\right)\\ 
%= & P(\chi^{2}\leq q)+O(n^{-3/2}).
%\end{align*}

%And our Bartlett correction
%formula implies that the $D$ to achieve the above is independent
%of third order influence functions of $\psi$. We think a heuristic
%way to understand this is that, the expression of $\psi_{\min}$ and
%$\psi_{\max}$ is also dependent on the third order influence functions,
%so when we study the probability of the inequality, the third order
%inflence function is cancelled.

To proceed, one natural way is to set $U_{1}=U_{2}=D/n$, and then find $D$ such that
\begin{equation}\label{eq: selfn_D}
P\left(-\sqrt{q_0}-D/n\leq A_n \leq\sqrt{q_0}+D/n\right)=\alpha+O(n^{-3/2}).
\end{equation}
Letting $\Phi(\cdot)$ and $\nu(\cdot)$ be the cumulative distribution function and density of standard normal respectively, we write the Edgeworth expansion of $A_n$ as
\begin{equation}
F_{A_{n}}(x)=\Phi(x)+p_{1}(x)\nu(x)n^{-1/2}+p_{2}(x)\nu(x)n^{-1}+O(n^{-3/2}).\label{eq: edgeworth_A_n}
\end{equation}
where $p_{1}(x)$ is an odd polynomial of $x$ and $p_{2}(x)$ is an even polynomial of $x$. By computation, we can show that the polynomial $p_{2}(x)$ is dependent on the third-order influence function (roughly speaking, this is because the third-order influence function is necessary to approximate $A_n$ up to a residual of $O_p(n^{-3/2})$). Based on the Edgeworth expansion for $A_n$, the LHS of \eqref{eq: selfn_D} can be expanded as
\begin{align*}
 & P(-\sqrt{q_0}-D/n\leq A_n\leq\sqrt{q_0}+D/n)\\
= & \Phi(\sqrt{q_0}+D/n)-\Phi(-\sqrt{q_0}-D/n)+n^{-1}(p_{2}(\sqrt{q_0})-p_{2}(-\sqrt{q_0}))\nu(\sqrt{q_0})+O(n^{-3/2})\\
= & P(\chi^{2}\leq q_0)+2n^{-1}D\nu(\sqrt{q_0})+2n^{-1}p_{2}(\sqrt{q_0})\nu(\sqrt{q_0})+O(n^{-3/2}).
\end{align*}
Therefore, to achieve \eqref{eq: selfn_D}, the choice of $D$ is given by $D=-p_{2}(q_0)$, which depends on the third-order influence function. 

The third-order influence function is required even if we choose $U_1$ and $U_2$ in more complicated ways. For example, suppose that we set $U_1=D_1n^{-1/2}+D_2n^{-1}$ and $U_2=D_3n^{-1/2}+D_4n^{-1}$. Then  we want
\[
P(-\sqrt{q_0}+D_{1}n^{-1/2}+D_{2}n^{-1}\leq A_n\leq\sqrt{q_0}+D_{3}n^{-1/2}+D_{4}n^{-1})=\alpha + O(n^{-3/2})
\]
From the Edgeworth expansion for $A_{n}$ given in \eqref{eq: edgeworth_A_n}, the LHS above can be expanded as
\begin{align*}
 & \Phi(\sqrt{q_0}+D_{3}n^{-1/2}+D_{4}n^{-1})-\Phi(-\sqrt{q_0}+D_{1}n^{-1/2}+D_{2}n^{-1})\\
+ & n^{-1/2}\left(p_{1}(\sqrt{q_0}+D_{3}n^{-1/2})\nu(\sqrt{q_0}+D_{3}n^{-1/2})-p_{1}(-\sqrt{q_0}+D_{1}n^{-1/2})\nu(-\sqrt{q_0}+D_{1}n^{-1/2})\right)\\
+ & 2n^{-1}p_{2}(\sqrt{q_0})\nu(\sqrt{q_0})+O(n^{-3/2}).
\end{align*}
We need to choose $D_{1},D_{2},D_{3},D_{4}$ so that the above
is equal to $\alpha+O(n^{-3/2})$. From our computation
we can show that $p_{1}$ does not depend on third-order influence
functions. More precisely, the moments of $A_{n}$ are computed in Appendix
\ref{sec:Computation_edgeworth}. From there, we can see that the
first three cumulants of $A_{n}$ do not depend on the
third-order influence function of $\psi$. From formula (2.24) of
\citet{Hall1992} we know that $p_{1}$ only depends on cumulants up
to the third order, and hence we can conclude that $p_{1}$ is independent
of the third-order influence function. However, $p_2$ depends on the third-order influence function. Therefore, to cancel
the third-order influence function in $p_{2}$ in the above display,
some of $D_{1},D_{2},D_{3},D_{4}$ must be dependent on the third-order influence function.

%  (SHALL WE USE HIGHER-ORDER PIVOTAL PROPERTY?)  \st{asymptotic pivotal property.}  as it is viewed as a higher-order counterpart of the self-normalizing behavior
The above analysis shows that the third-order influence function is necessary if we want to construct a CI with a coverage error of order $O(n^{-3/2})$ using the delta method. Therefore, the independence of the third-order influence function in the correction of $q$ can be seen as an advantage for DRO. We call this a higher-order self-normalizing property. ``Self-normalizing'' means that, in order to get an asymptotically exact CI from EL (or DRO whose connection will be explained in detail momentarily), it suffices to set $q$ as the quantile of $\chi^2$, which does not depend on any (estimated) information for $\psi$. However, if we construct the CI using the delta method, we need to estimate the variance of the first-order influence function. The independence of the third-order influence function in the correction of $q$ in DRO, compared to the need of estimating the third-order influence function in the natural correction of the delta method, in order to drive down coverage error, therefore forms a higher-order analog of the self-normalizing behavior.
% \st{asymptotic pivotal property}.
% Our main observation in this subsection
% This can be seen as a (lower-order) self-normalizing property. Our result implies a higher-order counterpart of this.

\section{Comparisons with Empirical Likelihood}\label{sec: comparison with EL}

\subsection{Duality between DRO and EL}\label{sec:duality}

DRO and EL are connected via a duality relation in the following sense.
% We analyse the dual relationship between DRO and EL and show that they lead to the same CI with a proper choice of parameters.
First, the so-called profile likelihood, denoted $W_n(\psi_1)$, in the EL method is defined as
% Indeed, in our model where the difference between different probability distributions is measured in terms of $\phi$-divergence, EL can be formulated as %(\citet{diciccio1991,Baggerly1998,CORCORAN,CAMPONOVO201438})
\begin{equation}\label{eq: EL_formulation}
\begin{array}{c} 
\min_P D_{\phi}(P||\hat{P})\\
s.t.\ \psi(P)=\psi_1
\end{array}\end{equation}
where $D_{\phi}$ is set as the reverse KL divergence, or the EL, in the original setup in \cite{owen1988empirical}. The resulting CI can be written as $\{\psi: W_n(\psi_1)\leq \eta\} $, where $\eta$ is precisely $q_0/2n$, and $q_0$ is the choice of $q$ that gives asymptotic exact CI using DRO \eqref{eq: DRO_L_formlation} as defined in Corollary \ref{cor: exact_CI}. This is seen by establishing a nonparametric analog of Wilks' Theorem that shows $W_n(\psi(P_0))$ converges to $\chi^2_1$. Compared to DRO, EL places the divergence in the objective instead of constraint, and the target functional in the constraint instead of the objective.

% Thus, in EL, we try to find that to achieve an objective value of 
% $\psi_1$, how much $P$ has to deviate from $\hat{P}$. And if $P$ has to deviate a lot from $\hat{P}$, then it is less likely that we put this $\psi_1$ into the confidence interval. 

To argue more concretely the dual relationship between DRO \eqref{eq: DRO_formulation} and EL \eqref{eq: EL_formulation}, suppose for simplicity there exists a minimizer for both optimization problems. Notice that, if $W_n (\phi_1)\leq \eta$, then  there exists $P_1$ such that $\psi (P_1) = \psi_1$ and $D_\phi (P||\hat{P})\leq \eta$. This means this $P_1$ is feasible for \eqref{eq: DRO_formulation} with objective value $\psi_1$. In other words, $\psi_1\in [\psi_{\min}(n,\eta),\psi_{\max}(n,\eta)]$. On the other hand, suppose that $\psi_1\in [\psi_{\min}(n,\eta),\psi_{\max}(n,\eta)]$. Then notice that the feasible region of DRO \eqref{eq: DRO_formulation} is convex (and thus connected), and we have that $\psi_1$ could be achieved by some $P_1$ feasible to \eqref{eq: DRO_formulation}. Therefore, there exists $P_1$ such that $D_\phi (P_1||\hat{P})\leq \eta$ with $\psi(P_1)\leq \eta$. This implies $W_n (\psi_1)\leq \eta$. Combing both directions, we have that $[\psi_{\min}(n,\eta),\psi_{\max}(n,\eta)]=\{\psi: W_n(\psi_1)\leq \eta\} $ so DRO and EL constructs the same CI.

\subsection{Difference in Analysis Route between DRO and EL}

%We will need an asymptotic expansion for the optimal value of
%\begin{align*}
%\max_{p} & \sum\log p^{i}\\
%s.t. & \sum p^{i}=1,\theta\left(P\right)=\theta\left(P_{0}\right)
%\end{align*}
%We can just investigate the lagrangian function
%\[
%\min_{\alpha,\beta}\max_{P\geq0}\sum\log %p^{i}-\alpha(\theta(P)-\theta(P_{0}))-\beta(\sum p^{i}-1)
%\]
%After an asymptotic expansion for the optimal is derived, we proceed
%and investigate is distribution (or edgeworth expansion).

From the equivalence of EL and DRO, in principle we can use either one for developing higher-order corrections. However, in the EL formulation, there is no explicit constraint that tells $P$ should be close to $\hat{P}$ (or $L$ should be close to 1). This imposes difficulty to rigorously expand $\psi(P)$ around $P_0$ uniformly over $L$. In comparison, the following lemma holds for DRO:
\begin{lem}
\label{lem: L_bound}(Lemma 13 of \citet{duchi2016statistics}) Suppose
that $\phi$ is convex and $\phi(1)=\phi^{\prime}(1)=0,\phi^{\prime\prime}(1)>0$
and has continuous second-order derivative on $(0,\infty)$. Then
there exists constants $c_{\phi}$ and $C_{\phi}$ such that $\left\Vert L-1\right\Vert_{2}\in[c_{\phi},C_{\phi}]$ for all $L$ such that
$\hat{E}[\phi(L)]\leq\frac{q}{2n}$. Here $\left\Vert\cdot\right\Vert_2$ is the Euclidean norm and $L$ is regarded as a vector when computing the norm.
\end{lem}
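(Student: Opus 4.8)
The plan is to compare the divergence function $\phi$ with the quadratic $(x-1)^2$ on the compact range of feasible likelihood ratios, and then sum over coordinates. Regarding $L=(L_1,\dots,L_n)$ as a vector, the constraint $\hat E[\phi(L)]\le\frac{q}{2n}$ reads $\sum_{i=1}^n\phi(L_i)\le q/2$, while $\left\Vert L-\mathbf{1}_n\right\Vert_2^2=\sum_{i=1}^n(L_i-1)^2$. First I would observe that, since $\phi$ is convex with $\phi(1)=\phi'(1)=0$, we have $\phi\ge 0$, so each coordinate obeys $\phi(L_i)\le q/2$; hence every feasible $L_i$ lies in the sublevel set $S:=\{x>0:\phi(x)\le q/2\}$. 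Convexity makes $S$ an interval containing $1$, and $\phi''(1)>0$ together with $\phi'(1)=0$ makes $\phi'$ strictly positive just to the right of $1$ and strictly negative just to the left, so $\phi$ grows at least linearly away from $1$ and $S=[c_0,C_0]\subset(0,\infty)$ is a bounded, deterministic interval depending only on $\phi$ and $q$ (not on $n$ or the data).

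The heart of the argument is a two-sided quadratic bound $a(x-1)^2\le\phi(x)\le b(x-1)^2$ on $S$ for constants $0<a\le b<\infty$. To obtain it I would study $g(x):=\phi(x)/(x-1)^2$ for $x\ne 1$, extended by $g(1):=\phi''(1)/2$. A second-order Taylor expansion with continuous remainder, $\phi(x)=\tfrac12\phi''(\xi)(x-1)^2$ for some $\xi$ between $1$ and $x$, shows $g$ is continuous at $1$ (and, trivially, everywhere else on $S$), while strict positivity of $g$ away from $1$ amounts to $\phi(x)>0$ for $x\in S\setminus\{1\}$. This last point is exactly where the hypotheses must be used with care: if $\phi(x_0)=0$ for some $x_0\ne 1$, convexity would force $\phi\equiv 0$ on the segment between $1$ and $x_0$, hence $\phi''\equiv 0$ there, contradicting $\phi''(1)>0$; thus $1$ is the unique minimizer and $g>0$ on all of $S$. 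By compactness $g$ attains a strictly positive minimum $a$ and a finite maximum $b$, which yields the sandwich. I expect this uniform positivity of $g$ over the \emph{entire} feasible range — rather than merely near $1$ — to be the main obstacle, since a convex $\phi$ may have $\phi''$ vanish at points away from $1$, so a purely local Taylor or mean-value estimate does not by itself deliver a single constant $a$.

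Summing the lower quadratic bound over coordinates gives $a\left\Vert L-\mathbf{1}_n\right\Vert_2^2\le\sum_{i=1}^n\phi(L_i)\le q/2$, hence $\left\Vert L-\mathbf{1}_n\right\Vert_2\le\sqrt{q/(2a)}=:C_\phi$; this boundedness is the substantive content, and it is precisely what justifies expanding $\psi(\hat PL)$ in influence functions uniformly over feasible $L$. For the lower bound I would instead use the upper quadratic bound: when the divergence constraint is binding, $\sum_{i=1}^n\phi(L_i)=q/2$, so $q/2\le b\left\Vert L-\mathbf{1}_n\right\Vert_2^2$ and $\left\Vert L-\mathbf{1}_n\right\Vert_2\ge\sqrt{q/(2b)}=:c_\phi$. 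I would flag here that a strictly positive lower bound cannot hold for every feasible $L$ (e.g.\ $L=\mathbf{1}_n$ satisfies the constraint with value $0$); the estimate $\left\Vert L-\mathbf{1}_n\right\Vert_2\ge c_\phi$ is understood for $L$ on the boundary $\hat E[\phi(L)]=\frac{q}{2n}$, which is where the DRO extremizer lies and where the bound is actually applied. This binding-constraint caveat, together with the uniform quadratic sandwich, completes the argument.
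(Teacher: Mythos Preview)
Your argument is correct and is the standard way to prove this statement. Note that the paper does not give its own proof: the lemma is quoted as Lemma~13 of \citet{duchi2016statistics}, so there is nothing in the paper to compare against beyond confirming that your proof is sound. Your flag that the lower bound $\left\Vert L-\mathbf{1}_n\right\Vert_2\ge c_\phi$ can only hold on the active boundary $\hat E[\phi(L)]=\frac{q}{2n}$ is exactly right and matches how the paper uses the lemma (the KKT analysis in Lemma~\ref{lem: KKT} and Lemma~\ref{lem: order_alpha_beta} works with the binding constraint). One small point worth tightening: if $\phi(0^+)\le q/2$ the sublevel set $S$ is $(0,C_0]$ rather than a compact $[c_0,C_0]$, so the extreme-value argument for $g(x)=\phi(x)/(x-1)^2$ needs the observation that $g$ extends continuously to $0$ with $g(0)=\phi(0^+)>0$ (the same uniqueness-of-minimizer argument you already gave rules out $\phi(0^+)=0$); this is routine but should be said.
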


Lemma \ref{lem: L_bound} implies that (regarding $\hat{P}L$ and $L$ as vectors) $||\hat{P}L-\hat{P}||_{2}\in[n^{-1/2}c_{\phi},n^{-1/2}C_{\phi}]$, and thus $\hat{P}L-\hat{P}$ is of order $n^{-1/2}$. Hence, in the expansion in Assumption \ref{assu: psi_expansion} part 1, we expect that $E_{\Delta}IF_j(K_1,K_2,\dots,K_j;\hat{P})$ is of order $ n^{-j/2}$. In other words, the DRO formulation explicitly requires that $L$ belongs to a bounded ball, which makes it easier to obtain uniform expansions.  
%(WE HAVEN'T DEFINED THE NOTATION $\sim$)

% Though not directly used in our analysis, it is worth noting the following existing lemma:

% that We highlight this interpretation of $L$, the following lemma gives a helpful heuristic for understanding the order of terms in the expansion in Assumption \ref{assu: psi_expansion} (1).
%For the residual, heuristically, we would require that $R_{j}$ is
%small when $G-F$ is small, and the magnitude of $R_{j}$ should be
%$\left\Vert G-F\right\Vert ^{j+1}$ for some ``appropriate'' choice
%of the norm. For the asymptotic expansion of DRO, what we need is
%actually a stochastic expansion of $\psi(\hat{P}L)$ centered at %$\psi(\hat{P})$,
%so we will discuss the distance between $\hat{P}L$ and $\hat{P}$.
%We observe that $\hat{P}L$ is a discrete probability distribution
%supported at $\hat{P}$. Furthermore, with our restriction on the
%$\phi$-divergence, we have the following relation (regarding $L$
%as a $n$-vector):

Nonetheless, for some special models, it is more convenient
to study using EL. For example,
if $\psi(P)$ is implicitly given as the root of $E_{P}m(X,\psi)=0$,
then we could just write the constraint $\psi\left(P\right)=\psi\left(P_{0}\right)$
as $E_{P}m(X,\theta)=0$ which is linear
in $P$. For general statistical functionals, however, DRO is easier to work with as we have argued.
% Since we are looking for a general result, we decided to develop the result using the DRO formulation.

\subsection{Generalizing Criterion for Being Bartlett Correctable}

Let $W_{n0}:=2n\phi^{\prime\prime}(1)W_n(\psi_0)$ denote the EL statistic, whose asymptotic distribution is $\chi^2_1$. The Bartlett correctability in the EL literature refers to the existence of constant $a$ such that $P(W_{n0}(1-a/n) \leq t) = P(\chi^2_1\leq t) + O(n^{-3/2})$ or $P(W_{n0} \leq t(1+a/n)) = P(\chi^2_1\leq t) + O(n^{-3/2})$. From the duality between DRO and EL, we have that  $P(W_{n0}\leq t(1+a/n))=P(\psi(P_0)\in (\psi_{\min}(n,\eta),\psi_{\max}(n,\eta)))$ where $\eta = \frac{t(1+a/n)}{2n\phi^{\prime\prime}(1)}$. Let $t=\chi^2_{1;1-\alpha}$. Then comparing with \eqref{eq: Bartlett_formula}, we have that $a=-\frac{A(\sqrt{\chi^2_{1;1-\alpha}})}{\sqrt{\chi^2_{1;1-\alpha}}}$. Hence $a$ being a constant is equivalent to that $\frac{A(\sqrt{\chi^2_{1;1-\alpha}})}{\sqrt{\chi^2_{1;1-\alpha}}}$ is a constant independent of $\alpha$. This is in turn equivalent to saying that the coefficients for $x^3$ and $x^5$ in the expression for $A(x)$ are zero. By checking the expression of $A(x)$, we can see that this is equivalent to 
%In \citet{CORCORAN}, \citet{Baggerly1998} and \citet{CAMPONOVO201438}
%the Bartlett correctability for general choice of divergence is investigated.
%In \citet{Baggerly1998} and \citet{CAMPONOVO201438}, the considered
%divergence function class is restricted to a special parametric class
%(called the Cressie--Read power divergence). \citet{CORCORAN} considered
%the general case where $\phi$ is a general function. And it is shown
%that under regularity conditions, when $\psi(P)=f(E_{P}\mathbf{X})$,
%the Bartlett correctability is equivalent to 
\[
\phi^{\prime\prime\prime}(1)=-2\phi^{\prime\prime}(1),\phi^{(4)}(1)=-3\phi^{\prime\prime\prime}(1).
\]
In other words, Bartlett correctability is equivalent to the above relation, for general statistical functional estimation. This generalizes the observations in Section 3 of \cite{CORCORAN}, which only works for the smooth function model $\psi(P)=f(E_P Z)$.

\subsection{Comparison with Bartlett Corrections for Smooth Function Models}\label{sec:comparison smooth}
For the special case that $\psi(P)=\theta(E_{P}{X})$ and $\phi$ is set as the reverse KL-divergence (i.e., $\phi(x)=-\log x+ x - 1$), an explicit
formula for Bartlett correction is given in \citet{diciccio1991} (also given in Section 13.2 of \citet{owen2001empirical}). To compare with that result, we first reduce ours to this special case using the notations in \citet{diciccio1991}. We have the following:
\begin{corollary} \label{cor: function_of_mean}
Suppose that the conditions in Theorem \ref{thm: Bartlett} holds. Suppose further that $\psi(P)=\theta(E_{P}X)$ and $\phi(x)=-\log x+ x - 1$. Define ${\mu_0} = E_{P_0}X$ and $\alpha^{j_1\dots j_k} = E(X^{j_1}-\mu^{j_1}_0)\dots(X^{j_k}-\mu^{j_k}_0)$ where $X^{j}$ and $\mu^j$ denote the $j$-th element of ${X}$ and ${\mu}$, respectively. Suppose that $\alpha_{ij}=\delta_{ij}:=1\{i=j\}$. Let $\theta_{j_1\dots j_k}=\frac{\partial^k\theta(\mu)}{\partial\mu^{j_1}\dots\partial\mu^{j_k}}|_{\mu=\mu_0}$. Define $Q=(\sum_i \theta_i^2)^{-1},M^{ij}=Q\theta_i\theta_j,N^{i}=Q\theta_i$, 
\begin{gather*}
    \begin{array}{l}
         t_1 = \sum_{jklmno}\alpha^{jkl}\alpha^{mno}M^{jm}M^{kn}M^{lo},\  t_2=\sum_{jklmno}\alpha^{jkl}\alpha^{mno}M^{jk}M^{lm}M^{no},  \\
         t_3 = \sum_{jklm}\alpha^{jklm}M^{jk}M^{lm},\  t_4 = \sum_{jklmn}\alpha^{jkl}N^{j}\theta_{mn}(\delta^{mk}-M^{mk})(\delta^{nl}-M^{nl}),\\
         t_5 = \sum_{jklm}Q\theta_{jk}\theta_{lm}\left\{(\delta^{jk}-M^{jk})(\delta^{lm}-M^{lm})-2(\delta^{jl}-M^{jl})(\delta^{km}-M^{km})\right\}
    \end{array}
\end{gather*}
%Here, we sum over repeated subscripts. For example, $$t_1 = \sum_{1\leq j,k,l,m,n,o\leq d}\alpha^{jkl}\alpha^{mno}M^{jm}M^{kn}M^{lo}$$ where $d $ is the dimension of $X$. (NOT SURE WHAT THIS MEANS..) 
Then $-A(x)/x=\frac{5}{3}t_1-2t_2+\frac{1}{2}t_3-t_4+\frac{1}{4}t_5$.
\end{corollary}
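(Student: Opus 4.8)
The plan is to specialize the general coefficient $A(x)$ of Theorem~\ref{thm: Bartlett} to $\phi(x)=-\log x+x-1$ and to the influence functions of Proposition~\ref{prop: function_of_mean}, and then to match the result against DiCiccio's tensor expression monomial by monomial. First I would record $\phi^{\prime\prime}(1)=1$, $\phi^{\prime\prime\prime}(1)=-2$, $\phi^{(4)}(1)=6$, which satisfy the correctability relations $\phi^{\prime\prime\prime}(1)=-2\phi^{\prime\prime}(1)$ and $\phi^{(4)}(1)=-3\phi^{\prime\prime\prime}(1)$ identified in Section~\ref{sec: comparison with EL}; equivalently $2\phi^{\prime\prime}+\phi^{\prime\prime\prime}=0$ and $-2\phi^{\prime\prime2}-4\phi^{\prime\prime}\phi^{\prime\prime\prime}-3\phi^{\prime\prime\prime2}+\phi^{\prime\prime}\phi^{(4)}=0$. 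Every summand of the $x^{5}$- and $x^{3}$-coefficients in $A(x)$ carries one of these two vanishing factors, so both coefficients drop out and only the linear term survives, leaving $-A(x)/x=\frac{1}{36\kappa_2^3}[\,\cdots\,]$ with no remaining $x$-dependence. This is consistent with the claimed right-hand side being $x$-free, and is precisely the manifestation of reverse-KL being Bartlett correctable.

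Next I would substitute the canonical influence functions $IF_k=\sum_{i_1\cdots i_k}\theta_{i_1\cdots i_k}\prod_{s}(X^{i_s}-\mu_0^{i_s})$ from Proposition~\ref{prop: function_of_mean}; since each factor is already centered and $\theta_{i_1\cdots i_k}$ is symmetric, these satisfy part~3 of Assumption~\ref{assu: psi_expansion} with no adjustment. Using $\alpha^{jk}=\delta^{jk}$, I would evaluate each moment in the surviving bracket as a contraction of $\alpha^{jkl},\alpha^{jklm}$ against $\theta_j,\theta_{jk}$: in particular $\kappa_2=\sum_j\theta_j^2=Q^{-1}$, $\gamma=\sum_{jkl}\theta_j\theta_k\theta_l\alpha^{jkl}$, $\mu_4=\sum_{jklm}\theta_j\theta_k\theta_l\theta_m\alpha^{jklm}$, $\mu_{2,c}=\sum_{jk}\theta_j\theta_k\theta_{jk}$, $\mu_{2,d}=\sum_j\theta_{jj}$, $\mu_{2,2}=\sum_{jk}\theta_{jk}^2$, $\mu_{1,2,d}=\sum_{jkl}\theta_j\theta_{kl}\alpha^{jkl}$, together with the two chained quantities $\mu_{2,a}=\sum_{jkl}\theta_j\theta_k\theta_{jl}\theta_{kl}$ and $\mu_{2,b}=\sum_{jklm}\theta_j\theta_k\theta_m\theta_{lm}\alpha^{jkl}$.

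Then I would expand $t_1,\dots,t_5$ through $M^{ij}=Q\theta_i\theta_j$, $N^i=Q\theta_i$ and $Q=\kappa_2^{-1}$. Because every free index is contracted against a single $\theta$, the two cubic terms collapse to $t_1=t_2=Q^3\gamma^2$, while $t_3=Q^2\mu_4$. The substantive step is $t_4$ and $t_5$: expanding each product $(\delta-M)(\delta-M)$ into four pieces produces, besides diagonal terms, partial contractions of the form $\sum_m\theta_{lm}\theta_m$, which — by the permutation symmetry of the moment tensor $\alpha^{jkl}$ — I would identify with $\mu_{2,a}$ and $\mu_{2,b}$. Carrying this through yields $t_4=Q\mu_{1,2,d}-2Q^2\mu_{2,b}+Q^3\gamma\mu_{2,c}$ and $t_5=Q\mu_{2,d}^2-2Q^2\mu_{2,c}\mu_{2,d}-Q^3\mu_{2,c}^2-2Q\mu_{2,2}+4Q^2\mu_{2,a}$.

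Finally I would assemble $\frac{5}{3}t_1-2t_2+\frac{1}{2}t_3-t_4+\frac{1}{4}t_5$ and compare it, monomial by monomial, with $\frac{1}{36\kappa_2^3}[\cdots]$ after writing $\kappa_2=Q^{-1}$; each of the ten distinct products ($\gamma^2$, $\mu_4$, $\mu_{2,a}$, $\mu_{2,b}$, $\gamma\mu_{2,c}$, $\mu_{2,c}^2$, $\mu_{2,c}\mu_{2,d}$, $\mu_{2,2}$, $\mu_{2,d}^2$, $\mu_{1,2,d}$) then carries the same coefficient on both sides (for instance the $\gamma^2$ term is $-\tfrac13 Q^3\gamma^2$, arising from $(\tfrac53-2)t_1$ on one side and from $-12\gamma^2/(36\kappa_2^3)$ on the other). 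I expect the main obstacle to be exactly the index bookkeeping in $t_4$ and $t_5$: correctly expanding the $(\delta-M)$ products and recognizing the partial second-derivative contractions as $\mu_{2,a}$ and $\mu_{2,b}$ via the symmetry of $\alpha^{jkl}$, since a careless contraction would artificially split a single quantity into several spurious ones. By contrast, verifying that the influence functions are already canonical and that the $x^3,x^5$ coefficients vanish is routine once the correctability relations are noted.
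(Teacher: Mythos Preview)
Your proposal is correct and follows essentially the same route as the paper's proof: compute the moments $\kappa_2,\gamma,\mu_4,\mu_{2,a},\mu_{2,b},\mu_{2,c},\mu_{2,d},\mu_{2,2},\mu_{1,2,d}$ as tensor contractions of $\alpha^{jkl},\alpha^{jklm}$ against $\theta_j,\theta_{jk}$, expand $t_1,\dots,t_5$ via $M^{ij}=Q\theta_i\theta_j$, and match term by term after observing that the reverse-KL correctability relations kill the $x^3$ and $x^5$ coefficients in $A(x)$. Your intermediate expressions (in particular $t_1=t_2=Q^3\gamma^2$, $t_4=Q\mu_{1,2,d}-2Q^2\mu_{2,b}+Q^3\gamma\mu_{2,c}$, and $t_5=Q\mu_{2,d}^2-2Q^2\mu_{2,c}\mu_{2,d}-Q^3\mu_{2,c}^2-2Q\mu_{2,2}+4Q^2\mu_{2,a}$) coincide with the paper's.
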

  Compared to the result in \cite{diciccio1991}, the only difference is that we have a ``-2'' instead of ``+2'' in the formula for $t_5$. 
We politely claim that this is due to some error in the computation of \citet{diciccio1991}, and detail in what follows. If $X$ is one dimensional, such a difference would not affect the Bartlett correction formula. But if $X$ is multidimensional, then in general when $\sum_{jklm}Q\theta_{jk}\theta_{lm}(\delta^{jl}-M^{jl})(\delta^{km}-M^{km})\neq 0$, this would result in a different formula.

We will continue to use the notations and conditions as in Corollary \ref{cor: function_of_mean}. In addition, the definitions of $\bar{W}(\mu),B,U$ are given in the computation of step 4 of \cite{diciccio1988}, which is the technical report for the computation procedure for the result in \cite{diciccio1991}. $A$ is defined as $A=\frac{1}{n}\sum_{i=1}^nX_i$ (column vector) and it is assumed that $\mu_0=EX_i=0$ and $\Sigma_0 = EX_iX_i^T=I$, as in \cite{diciccio1988}.

In step 4 on page 9 of \citet{diciccio1988},
based on the asymptotic expansion for $\bar{W}(\mu)$, they want to
find $\mu$ to minimize $\bar{W}(\mu)$ subject to $\theta(\mu)=\theta_{0}$. This idea appears correct, but there is some problem with their computation method. More precisely, they suppose that the optimal $\mu$ (denoted by $\tilde{\mu}$)
has an expansion $\tilde{\mu}=A-(\mu_{1}+\mu_{2}+\mu_{3})$ where
$\mu_{i}$ is of order $O_{p}(n^{-i/2})$. Then,
they try to solve $\mu_{1}$, then $\mu_{2}$ and then $\mu_{3}$.
The problem appears when they try to solve $\mu_{2}$. After solving
$\mu_{1}$, they write that $\mu_{2}$ should be chosen to minimize the $\Omega(n^{-2})$ part of $n^{-1}\bar{W}(\mu)$, which leads to the claim that $\mu_2$ is the optimal solution to
\begin{equation}\label{eq: dic_obj} 
\sum_j\left[\mu_{2}^{j}\mu_{2}^{j}+2\mu_{2}^{j}(\mu_{1}^{j}+B^{j})\right]\ \ \text{such that}\ \ \Theta\mu_{2}=U
\end{equation}
However, $\sum_j\left[\mu_{2}^{j}\mu_{2}^{j}+2\mu_{2}^{j}(\mu_{1}^{j}+B^{j})\right]$
does not include all of the $O_{p}(n^{-2})$ terms of $n^{-1}\bar{W}(\mu)$. Indeed,
2$\sum_j\mu_{1}^{j}\mu_{3}^{j}$ is another $O_{p}(n^{-2})$ term. Although $\sum_j\mu_{1}^{j}\mu_{3}^{j}$ does not depend on $\mu_2$ explicitly, different choices of $\mu_2$ will give different restrictions to the choice of $\mu_3$. Hence, it is necessary to take into account $\sum_j\mu_{1}^{j}\mu_{3}^{j}$ in the objective. 
%One might
%think that in this step, we are only solving $\mu_{2}$ so we do not
%need to care about terms that are only related to $\mu_{3}$. However,
%the constraint $\theta(\mu)=\theta_{0}$ actually implies some deterministic
%relation between $\mu_{2}$ and $\mu_{3}$. So once $\mu_{2}$ is
%determined, $\mu_{3}$ is also determined (at least for some dimension,
%or some of its linear combination). Hence, when we solve $\mu_{2}$,
%we must take its effect on $\mu_{3}$ into account (which they didn't).
%since $\mu_{2}^{j}\mu_{2}^{j}$ and $\mu_{1}^{j}\mu_{3}^{j}$ are
%of the same order.

%\textit{A counter example}

%Let $\theta(\mu)=\mu^{(1)}+t\left(\mu^{(2)}\right)^{2}$, $\mu_{0}=0\ $
%and suppose that $(X^{(1)},X^{(2)})\sim N(0,I)$. Then any feasible
%$\mu$ can be written as $(-tx^{2},x)$. By writing $\mu$ like this,
%we will only need to solve a one-dimensional unconstrained optimization
%for step 4 of \cite{diciccio1988}. It turns out that the result is different
%from the result in that paper, but the same as the result derived
%by DRO.

\subsection{A Corrected Version of \texorpdfstring{\cite{diciccio1988}}{TEXT}}\label{sec:corrected}

The corrected version goes as follows. We solve $\mu_{1}$ as in \cite{diciccio1988}. Before solving $\mu_{2}$, we investigate the choice of $\mu_{3}$
when $\mu_{2}$ is given. As derived in step 4 of \cite{diciccio1988}, the constraint for $\mu_{3}$ is given by
\[
\Theta\mu_{3}=-\mu_{2}^{\top}\nabla^{2}(A-\mu_{1})+\frac{1}{6}\sum_{jkl}\theta_{jkl}(A-\mu_{1})^{j}(A-\mu_{1})^{k}(A-\mu_{1})^{l}
\]
where $\Theta$
is the gradient of $\theta(\mu)$ at $0$ and is a row vector, and $\nabla^2$ is the Hessian of $\theta(\mu)$ at 0. 
The computation in \cite{diciccio1988} gives that $\mu_{1}=MA=\Theta^{\top}(\Theta\Theta^{\top})^{-1}\Theta A$, which is parallel to $\Theta^{\top}$.
%So the objective related to $\mu_{3}$ is given by (notice that 
%is parallel to $\Theta^{T}$)
Hence once $\mu_1$ and $\mu_2$ are given, $\mu_1^{\top}\mu_3$ can be expressed as a multiple of $\Theta\mu_{3}$, which is determined by the above displayed equation. Adding $2\mu_1^{\top}\mu_3$ to the objective in \eqref{eq: dic_obj}, we find that we should actually optimize
%\[
%2\mu_{1}^{T}\mu_{3}=2A^{T}\Theta^{T}\Theta\mu_{3}\times(\Theta\Theta^{T})^{-1}=-2\Theta A\times(\Theta\Theta^{T})^{-1}\times\mu_{2}^{T}\nabla^{2}(I-M)A+..
%\]
%When we solve for $\mu_{2}$, we should actually optimize
\[
\mu_{2}^{\top}\mu_{2}+2\mu_{2}^{\top}(\mu_{1}+B+R) \text{\ \ such that\ \ } \Theta\mu_{2}=U
\]
where $R:=-(A^{\top}N)\nabla^{2}(I-M)A$. %and $N:=\Theta^T$.
The optimal solution should be
\[
\mu_{2}^{\prime}:=NU-(I-M)(B+R) =\mu_2 - (I-M)R
\]
where $\mu_2$ on the RHS is the optimal solution for $\mu_2$ given in \cite{diciccio1988}. Then, following the computation for $\mu_3$ as in \cite{diciccio1988}, we get that the correct expression for $\mu_3$ should be 
\[
\mu_3^{\prime} := \mu_3 + NA^{\top}(I-M)^{\top}\nabla^2(I-M)R
\]
As a result, $n^{-1}W_{0}$ computed in (3.8) of \cite{diciccio1988} should be deducted by (note that $\mu_1^{\top}(\mu_2^{\prime}-\mu_2)=0$ so $\mu_1^{\top}\mu_2$ in (3.8) is not changed)
\begin{align*}
& 2\mu_1^{\top}(\mu_3^{\prime}-\mu_3) - (\mu_2-\mu_2^{\prime})^{\top} (\mu_2-\mu_2^{\prime})+ 2\mu_2^{\top}(\mu_2 - \mu_2^{\prime}) \\
= & (A^{\top}N)^2 A^{\top}(I-M)\nabla^{2}(I-M)\nabla^{2}(I-M)A + 2\mu_2^{\top}(\mu_2 - \mu_2^{\prime}),
\end{align*}
It can be checked that the second part of the RHS above only contributes $O(n^{-3})$ to the expectation, i.e., $E\mu_2^{\top}(\mu_2-{\mu_2}^{\prime})=O(n^{-3})$. Thus, the expectation of the above is 
% (I'VE REWRITTEN TO AVOID TOO MANY BRACKETING IN EXPOSITION THAT HINDERS READING)
% (BETTER TO WRITE THIS, AND ALSO THE SIMILAR TYPE OF NOTE ABOVE, IN SOME WAY THAT MAKES CLEAR ITS ROLE IN THE ARGUMENT))
\[
\frac{1}{n^{2}}\sum_{j,k,l,m}\left(Q\theta_{jk}\theta_{lm}(I-M)^{kl}(I-M)^{mj}\right)+O(n^{-3}).
\]
%where we sum over repeated subscripts (DO WE STILL NEED THIS NOW? THE MEANING OF "REPEATED" IS STILL UNCLEAR).
Following (3.11) and (3.12) of \cite{diciccio1988}, we can see that the Bartlett adjustment term (which corresponds to the ``$\frac{5}{3}t_1-2t_2+\frac{1}{2}t_3-t_4+\frac{1}{4}t_5$'' in the statement of Corollary \ref{cor: function_of_mean} and also (3.12) of \cite{diciccio1988}; note that there is a typo in the latter: the coefficient of $t_4$ should be $-1$) is given by $n^2E[n^{-1}(W_0-1)]$. So we should deduct $n^2$ times of the above from the Bartlett correction formula (2.5) of \cite{diciccio1991}. This gives the result as in Corollary \ref{cor: function_of_mean}. 
%  (THIS PHRASE IS NOT TOO CLEAR, MAY NEED TO REWRITE..)
%We can also
%check that adding this term will not change the fact that the empirical
%likelihood statistic is Bartlett correctable.

\section{Numerical Experiments}\label{sec: numerical}
We conduct some numerical experiments to demonstrate the coverage enhancement of our DRO approach. The setup and choice of parameters are similar to \cite{diciccio1991}, but we consider general functionals, namely $V$-statistics and a stochastic optimization problem. In addition, in Appendix \ref{sec:numerics smooth} we present an example on the smooth function model where we numerically compare with the formula in  \cite{diciccio1991}, in support of our correctness claims in Sections \ref{sec:comparison smooth} and \ref{sec:corrected}.

\subsection{$V$-Statistics}
Consider $\psi (P) = E_{X_1,X_2\stackrel{\text{i.i.d.}}{\sim}P}h(X_1,X_2)$. We construct a CI for $\psi (P)$ using $n$ samples drawn independently from $P$. We compare among: 1) Standard EL: This is also equivalent to DRO via the duality discussed in Section \ref{sec:duality}; 2) Estimated Bartlett (EB): This refers to DRO with the corrected ball size $\eta$ in Corollary \ref{cor: Bartlett_formula} where the adjustment term is estimated from the sample (as discussed at the end of Section \ref{sec: Bartlett}); and 3) Theoretical Bartlett (TB): This refers to DRO with Bartlett correction using the theoretical adjustment term. In our numerical experiments, this is done by estimating the adjustment term using 5,000 samples. This sample size is large compared to $n$, which is set to be $\leq 50$ in all our experiments. The true value $\psi(P)$ is estimated by generating $1,000,000$ independent pairs  $(X_{1i},X_{2i})$ and taking the average of $h(X_{1i},X_{2i})$. For each method and each tested value of $n$, we replicate the experiment $N=100,000$ times to estimate the coverage accuracy.  Table \ref{tab: numerical_VstatGamma} describes the result when $P=\text{Gamma}(2,1)$, $h(X,Y)=\min\{12,(X-Y)^2+X+Y\}$ and the reverse KL-divergence with $\phi(x)=-\log x+ x - 1$ is used.

\begin{table}[H]
\centering
\caption{Coverage probabilities (with 95\% confidence interval) for $V$-statistics with $h(X,Y)=\min\{12,(X-Y)^2+X+Y\}$}
\label{tab: numerical_VstatGamma}
\small
\begin{tabular}{c|l l l}
\toprule

 Nominal Level       & 80\%   & 90\%      & 95\% \\
 \midrule
 EL,$n=15$     &$76.17\%\pm 0.26\%$ &$85.70\%\pm 0.22\%$ & $89.82\%\pm 0.19\%$\\
EB, $n=15$   &$78.23\%\pm 0.26\%$  & $86.59\%\pm 0.21\%$  & $90.64\%\pm 0.18\%$ \\
TB, $n=15$   &$77.87\%\pm 0.26\%$  &$86.76\%\pm 0.21\%$  & $90.75\%\pm 0.18\%$   \\
EL, $n=30$     &$78.59\%\pm 0.25\%$  &$88.79\%\pm 0.20\%$  & $93.97\%\pm 0.15\%$  \\
EB, $n=30$     & $79.65\%\pm 0.25\%$ &$89.62\%\pm 0.19\%$ & $94.60\%\pm 0.14\%$  \\
TB, $n=30$ &$79.47\%\pm 0.25\%$ &$89.39\%\pm 0.19\%$ & $94.47\%\pm 0.14\%$\\

\bottomrule
\end{tabular}
\end{table}

As we can see from Table \ref{tab: numerical_VstatGamma}, both TB and EB have improvements over EL, and the performances of TB and EB appear close. For example, when $n=15$ and the nominal level is 90\%, the empirical coverage probability of EL is 85.70\%, while EB and TB improve the empirical coverage probability to 86.59\% and 86.76\% respectively. We also observe that the improvement is more significant when $n$ is smaller. For example, when the nominal level is 80\%, the improvement given by EB compared to EL is about 2\% when $n=15$,  but the improvement is only about 1\% when $n=30$. 

%If we consider use chi square divergence for this example, we would get 

%\begin{table}[H]
%\centering
%\caption{Coverage accuracy for V statistics: %$h(X,Y)=\min\{12,(X-Y)^2+X+Y\}$}
%\label{tab: numerical_VstatGamma_2}
%\small
%\begin{tabular}{c|l l l}
%\toprule
%
% Nominal Level       & 80\%   &      &  \\
% \midrule
% EL,$n=15$     & $73.83\%\pm 0.27\%$ & & \\
%Estimated Bartlett, $n=15$   &$78.13\%\pm 0.26\%$  &   &  \\
%Theoretical Bartlett, $n=15$   &$78.18\%\pm 0.26\%$  & &    \\

%\bottomrule
%\end{tabular}
%\end{table}

Consider another example where $h(X_1,X_2) = \sin (X_1^2+X_2)$ and $P$ is the $t$-distribution with 3 degrees of freedom. We use the $\chi^2$-divergence where $\phi (x) = (x-1)^2.$ Other setups are the same as the previous experiment. The result is shown in Table \ref{tab: numerical_VstatSin}. 
\begin{table}[H]
\centering
\caption{Coverage probabilities (with 95\% confidence interval)  for $V$-statistics with $h(X,Y) = \sin (X^2+Y)$}
\label{tab: numerical_VstatSin}
\small
\begin{tabular}{c|l l l}
\toprule

 Nominal Level       & 80\%   & 90\%      & 95\% \\
 \midrule
 EL,$n=10$     &$75.88\%\pm 0.27\%$ &$86.49\%\pm 0.21\%$ &$92.04\%\pm 0.17\%$ \\
EB, $n=10$   &$78.66\%\pm 0.25\%$  & $88.82\%\pm 0.20\%$  &$94.06\%\pm 0.15\%$ \\
TB, $n=10$   & $80.72\%\pm 0.24\%$ &$90.66\%\pm 0.18\%$  &  $95.46\%\pm 0.13\%$  \\
 EL,$n=15$     &$76.85\%\pm 0.26\% $ &$87.36\%\pm 0.21\%$ & $92.84\%\pm 0.16\%$\\
EB, $n=15$   & $79.43\%\pm 0.25\%$ & $89.84\%\pm 0.19\%$  &$94.74\%\pm 0.14\%$ \\
TB, $n=15$   &$80.12\%\pm 0.25\%$  &$90.40\%\pm 0.18\%$  & $95.20\%\pm 0.13\%$    \\
EL, $n=30$     & $78.15\%\pm 0.26\%$ &$88.38\%\pm 0.20\%$  & $93.67\%\pm 0.15\%$   \\
EB, $n=30$     &$79.79\%\pm 0.25\%$  &$89.87\%\pm 0.19\%$ &$94.94\%\pm 0.14\%$    \\
TB, $n=30$ &$79.93\%\pm 0.25\%$ &$89.98\%\pm 0.19\%$ &$94.95\%\pm 0.14\%$ \\

\bottomrule
\end{tabular}
\end{table}
From Table \ref{tab: numerical_VstatSin}, we see a more significant improvement given by our correction. For example, when the nominal level is 80\% and the sample size $n$ is only 10, the  empirical coverage probability of EB is 78.66\%. On the other hand, for EL, even when $n=30$, the empirical coverage probability is only 78.15\%. When $n=30$, the performances of EB and TB appear close as in the previous experiment. However, when $n$ is small ($n=10$), we see that TB has significantly smaller coverage errors compared to EB. For example, when the nominal level is 80\% and $n=10$, TB exhibits an empirical coverage probability of 80.72\%, which is significantly better than EB whose empirical coverage probability is 78.66\%. We conjecture the reason is that with only 10 observations, it is difficult to estimate the adjustment term accurately. 

\subsection{Optimization}
Consider estimating the optimal value of a stochastic optimization problem, $\psi (P) = \inf_x E_P \ell (x;\xi)$ where $\xi = (Y,Z)$. Suppose that under $P$, $Z\sim \chi^2_2$ and $Y=Z+\epsilon$ where $\epsilon\sim N(0,1)$ and $Z$ and $\epsilon$ are independent. The loss function is chosen as the squared loss $\ell (x,(Y,Z)) = (Y-xZ)^2$. We use the reverse KL-divergence as in the first experiment. By construction, we know that $\psi(P) = 1$ which is obtained at the solution $x=1$, so we do not need to estimate $\psi(P)$ by simulation. Other configurations are the same as the previous experiments. The result is shown in Table \ref{tab: numerical_Opt}.

\begin{table}[H]
\centering
\caption{Coverage probabilities (with 95\% confidence interval) for the optimization model}
\label{tab: numerical_Opt}
\small
\begin{tabular}{c|l l l}
\toprule

 Nominal Level       & 80\%   & 90\%      & 95\% \\
 \midrule
 EL, $n=30$     &$73.49\%\pm 0.27\%$  &$83.85\%\pm0.23\%$ & $89.52\%\pm 0.19\%$  \\
EB, $n=30$     & $75.42\%\pm 0.27\%$ &$85.36\%\pm 0.22\%$ & $90.47\%\pm 0.18\%$  \\
TB, $n=30$ &$78.71\%\pm 0.25\%$ &$87.19\%\pm 0.21\%$ & $92.18\%\pm 0.16\%$\\
 EL,$n=50$     &$75.99\%\pm 0.26\%$ &$86.26\%\pm 0.21\%$ &$91.99\%\pm 0.17\%$ \\
EB, $n=50$   &$77.49\%\pm 0.26\% $ &$87.33\%\pm 0.21\%$  &$92.78\%\pm 0.16\%$ \\
TB, $n=50$   &$78.55\%\pm 0.25\%$  &$88.48\%\pm 0.20\%$  &$93.53\%\pm 0.15\%$    \\
%EL, $n=100$     &  & &$93.05\%\pm 0.16\%$   \\
%Estimated Bartlett, $n=100$     &  & &  $94.02\%\pm 0.15\%$ \\
%Theoretical Bartlett, $n=100$ & & & $94.28\%\pm 0.15\%$\\

\bottomrule
\end{tabular}
\end{table}

As in the preceding two experiments, our correction reduces the coverage error significantly. However, this example is harder in the sense that to get a coverage probability close to the nominal level, the required sample size is larger than the $V$-statistic case (so we set $n=30$ and $n=50$ for this example). Similar to the second experiment, the difference between TB and EB is significant when $n$ is smaller ($n=30$). For example, when $n=30$ and the nominal level is 80\%, the estimated coverage probabilities for EB and TB are 75.42\% and 78.71\%, respectively, which differ by more than 3\%. 

In all of the three experiments, the empirical coverage probability of EL is below the nominal level. Our correction enlarges the CI and consequently improves the coverage. 

\section{Generalization to Multidimensional Settings}\label{sec:multi}

We discuss a potential way to generalize our results to the multidimensional case, leaving the full investigation to future work. The goal is to construct a confidence region for a multidimensional von Mises functional. In parallel to the one-dimensional case, we want a confidence region with shape $A_{\eta}:=\{\boldsymbol{\psi}(P):D_{\phi}(P||\hat{P})\leq\eta\}$ where $\boldsymbol{\psi}(P)\in\mathbb{R}^d$.
Moreover, we want to achieve a higher-order correction:
\[
P(\boldsymbol{\psi}(P_{0})\in A_{\eta})=1-\alpha+O(n^{-3/2}).
\]
Notice that for any $t\in \mathbb{R}^{d}$, $t\cdot A_{\eta}:=\{t\cdot\boldsymbol{\psi}_1:\boldsymbol{\psi}_1\in A_{\eta}\}$ is always a
connected interval, which can be seen by reducing to the one-dimensional case with objective
$t\cdot\boldsymbol{\psi}(P)$. Hence, the above display is equivalent to
\[
P\left(t\cdot\left(\boldsymbol{\psi}(P_{0})-\boldsymbol{\psi}(\hat{P})\right)\in t\cdot\left(A_{\eta}-\boldsymbol{\psi}(\hat{P})\right),\forall t\in \mathbb{R}^{d}\right)=1-\alpha+O(n^{-3/2}).
\]

In Theorem \ref{thm: DRO_expansion}, by replacing $\psi$ with $t\cdot\boldsymbol{\psi}$,
we can get an approximation to the shape of the confidence region:
\[
t\cdot A_{\eta}\approx[S_{\min}(t,\eta),S_{\max}(t,\eta)].
\]

Based on this approximated confidence region, it suffices to find a choice of $\eta$ such that
\[
P\left(S_{\min}(t,\eta)\leq t\cdot\left(\boldsymbol{\psi}(P_{0})-\boldsymbol{\psi}(\hat{P})\right)\leq S_{\max}(t,\eta),\forall t\in \mathbb{R}^{d}\right)=1-\alpha+O(n^{-3/2}).
\]
From the formula in Theorem \ref{thm: DRO_expansion} we can see that both $S_{\min}$ and $S_{\max}$ are polynomials of $t$ where the coefficients are determined by the moments of the influence functions. As in Section \ref{sec: Bartlett}, we introduce a function $g_t$ such that $S_{\max}(t,\eta)=g_{t}(\sqrt{\eta}),S_{\min}(t)=g_{t}(-\sqrt{\text{\ensuremath{\eta}}})$. Then
\begin{align*}
 & P(S_{\min}(t,\eta)\leq t\cdot\left(\boldsymbol{\psi}(P_{0})-\boldsymbol{\psi}(\hat{P})\right)\leq S_{\max}(t,\eta),\forall t\in \mathbb{R}^{d})\\
= & P\left(-\sqrt{\eta}\leq g_{t}^{-1}\left(t\cdot\left(\boldsymbol{\psi}(P_{0})-\boldsymbol{\psi}(\hat{P})\right)\right)\leq\sqrt{\eta},\forall t\in \mathbb{R}^{d}\right)\\
= & P\left(\max_{t\in \mathbb{R}^{d}}\left(g_{t}^{-1}\left(t\cdot\left(\boldsymbol{\psi}(P_{0})-\boldsymbol{\psi}(\hat{P})\right)\right)\right)^{2}\leq\eta\right)\\
= & P\left(\max_{t\in \mathbb{R}^{d},\left\Vert t\right\Vert _{2}=1}\left(g_{t}^{-1}\left(t\cdot\left(\boldsymbol{\psi}(P_{0})-\boldsymbol{\psi}(\hat{P})\right)\right)\right)^{2}\leq\eta\right).
\end{align*}
The last line follows from the observation that $g_{at}=ag(t)$ for any $a\in \mathbb{R}$ (since by scaling the objective function, it is readily seen that $S_{\max}(at,\eta)=aS_{\max}(t,\eta)$).  We can solve a constrained optimization problem to approximate 
$\max_{t\in \mathbb{R}^{d},\left\Vert t\right\Vert _{2}=1}\left(g_{t}^{-1}\left(t\cdot\left(\boldsymbol{\psi}(P_{0})-\boldsymbol{\psi}(\hat{P})\right)\right)\right)^{2}$
 and then use Edgeworth expansion to give an expansion to the
preceding probability. Then we can find $\eta$ such that the probability
equals $1-\alpha+O(n^{-3/2})$.

\section{Technical Developments for DRO Expansion}\label{sec: DRO_technical}

%\subsection{KKT condition}
\subsection{The Roadmap}
Under Assumption \ref{assu: psi_expansion}, to get an approximation for the optimal value with residual $O_p(n^{-2})$, we can replace the objective function in \eqref{eq: DRO_L_formlation} with its approximation and solve the following problem
\begin{equation} \label{eq: hatP_L_leading}
\begin{array}{ll}
\max_{L} & \psi(\hat{P})+\sum_{j=1}^3\frac{1}{j!} E_{\Delta}IF_{j}(K_{1},K_{2},\dots,K_{j};\hat{P})\\
\text{subject to} & \hat{E}[\phi(L)]\leq\frac{q}{2n}\\
& \hat{E}L=1
\end{array}
\end{equation}
Throughout this section, we denote the objective of \eqref{eq: hatP_L_leading} as 
\begin{equation}\label{eq: psi_truncation}
{\psi}(\hat{P}L) :=  \psi(\hat{P})+\sum_{j=1}^3\frac{1}{j!} E_{\Delta}IF_{j}(K_{1},K_{2},\dots,K_{j};\hat{P}).
\end{equation}
Our starting point is the KKT condition. The following lemma allows us to use the KKT condition
as a necessary condition. In what follows, when we write $L_i$ or take the derivative w.r.t. $L_i$, $L$ is regarded as a vector (so $L_i=L(X_i)$ is the likelihood ratio at $X_i$). Note that the expression \eqref{eq: psi_truncation} is a polynomial of the vector $L$.
\begin{lem}
\label{lem: KKT}Let Assumptions \ref{assu: psi_expansion}, \ref{assu: phi_smoothness}
hold. Then with probability $1-o(1)$, we have that there exists $\tilde{\alpha}$,
$\beta$ and an optimal solution $L$ which satisfy the following conditions
\begin{gather}
\begin{array}{c}
\tilde{\alpha}n\frac{\partial}{\partial L_{i}}\psi(\hat{P}L)-\phi^{\prime}(L_{i})-\tilde{\alpha}\beta=0,i=1,2,\dots,n\\
\tilde{\alpha}>0\\
\hat{E}[\phi(L)]-\frac{q}{2n}=0\\
\hat{E}L-1=0
\end{array}\label{eq: KKT_alpha-1}
\end{gather}
\end{lem}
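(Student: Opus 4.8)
The plan is to identify the global maximizer of the polynomial program \eqref{eq: hatP_L_leading} and show it must satisfy the first-order KKT system \eqref{eq: KKT_alpha-1}, the three substantive points being existence of an optimizer, a constraint qualification, and strict positivity of the multiplier on the $\phi$-ball. First I would establish existence: by Lemma \ref{lem: L_bound} every feasible $L$ obeys $\|L-\mathbf{1}_n\|_2\le C_\phi$, so the feasible set $\{L:\hat{E}[\phi(L)]\le q/(2n),\ \hat{E}L=1\}$ is bounded; it is closed since $\phi$ is continuous, hence compact, and because ${\psi}(\hat{P}L)$ in \eqref{eq: psi_truncation} is a polynomial in the vector $L$ it attains a maximum at some $L^\star$. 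I would then write the maximization's Lagrangian with multiplier $\lambda\ge 0$ on $g(L):=\hat{E}[\phi(L)]-q/(2n)\le 0$ and $\mu$ on $h(L):=\hat{E}L-1=0$, noting $\nabla g(L)=\tfrac1n(\phi'(L_1),\dots,\phi'(L_n))$ and $\nabla h(L)=\tfrac1n\mathbf{1}_n$.

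Next I would verify LICQ at $L^\star$ whenever the inequality is active. If $\nabla g(L^\star)$ and $\nabla h(L^\star)$ were linearly dependent, then $\phi'(L_i^\star)$ would be constant in $i$; since Assumption \ref{assu: phi_smoothness} gives $\phi''>\delta>0$ on the feasible range $[c_0,C_0]$, $\phi'$ is strictly increasing there, which forces all $L_i^\star$ equal, hence $L^\star=\mathbf{1}_n$ by $\hat{E}L=1$. But then $g(L^\star)=-q/(2n)<0$, contradicting activeness. So LICQ holds at $L^\star$ when the inequality binds, and the KKT theorem applies to give $\nabla{\psi}(\hat{P}L^\star)=\lambda\nabla g(L^\star)+\mu\nabla h(L^\star)$ with $\lambda g(L^\star)=0$.

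The crux is the estimate that $\nabla{\psi}(\hat{P}L)$ is never parallel to $\mathbf{1}_n$ on the feasible set with probability $1-o(1)$. Differentiating \eqref{eq: psi_truncation} gives $\nabla_{L_i}{\psi}(\hat{P}L)=\tfrac1n IF_1(X_i;\hat{P})+\tfrac1{n^2}\sum_l(L_l-1)IF_2(X_i,X_l;\hat{P})+\cdots$. Projecting onto $\{v:\mathbf{1}_n^\top v=0\}$ and using that $IF_1$ has zero empirical mean (Assumption \ref{assu: psi_expansion} part 3), the projected leading term is $\tfrac1n(IF_1(X_i;\hat{P}))_i$, whose Euclidean norm is $n^{-1/2}\hat{\kappa}_2^{1/2}=\Theta_p(n^{-1/2})$ because $\hat{\kappa}_2=\widehat{Var}IF_1$ is bounded away from $0$ with probability $1-o(1)$ (Assumption \ref{assu: psi_expansion} part 2). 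By Lemma \ref{lem: L_bound} ($\|L-1\|_2=O(1)$), the moment bounds $\hat{E}IF_j^{m}=O_p(1)$, and Cauchy--Schwarz, the projected second- and third-order corrections are $O_p(n^{-1})$ in norm, uniformly over feasible $L$. Hence the projection of $\nabla{\psi}$ has norm at least $\Theta_p(n^{-1/2})-O_p(n^{-1})>0$, so $\nabla{\psi}\not\parallel\mathbf{1}_n$. Consequently the inequality must be active (if inactive at $L^\star$, stationarity on the manifold $\hat{E}L=1$ alone would force $\nabla{\psi}(\hat{P}L^\star)\parallel\mathbf{1}_n$), and the same fact rules out $\lambda=0$, so $\lambda>0$. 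Setting $\tilde{\alpha}:=1/\lambda>0$ and $\beta:=\mu$, the componentwise stationarity $n\nabla_{L_i}{\psi}=\lambda\phi'(L_i)+\mu$ rearranges exactly to $\tilde{\alpha}n\nabla_{L_i}{\psi}(\hat{P}L)-\phi'(L_i)-\tilde{\alpha}\beta=0$, while complementary slackness $\lambda g(L^\star)=0$ with $\lambda>0$ yields $\hat{E}[\phi(L^\star)]=q/(2n)$, giving \eqref{eq: KKT_alpha-1}.

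I expect the main obstacle to be the third paragraph, namely controlling the higher-order terms of $\nabla{\psi}(\hat{P}L)$ uniformly over the feasible region so that the nondegeneracy of $IF_1$ (the $\widehat{Var}IF_1$-bounded-below condition) strictly dominates them and forces $\lambda>0$. Everything else — existence, LICQ, and the algebraic passage to \eqref{eq: KKT_alpha-1} — is routine, and the qualifier ``with probability $1-o(1)$'' is precisely the event on which the $O_p(1)$ moment bounds and $\widehat{Var}IF_1>0$ of Assumption \ref{assu: psi_expansion} simultaneously hold.
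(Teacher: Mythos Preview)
Your proposal is correct and follows essentially the same approach as the paper: compactness for existence, the strict monotonicity of $\phi'$ to reduce constraint-gradient collinearity to $L=\mathbf{1}_n$, and the key estimate that $\widehat{Var}\,IF_1=\Omega_p(1)$ dominates the $O_p(n^{-1})$ higher-order gradient terms to rule out $\nabla\psi\parallel\mathbf{1}_n$. Your presentation is in fact slightly cleaner than the paper's, since you unify the two degenerate cases (inactive inequality/$\lambda=0$ and the non-regular point $L=\mathbf{1}_n$) into the single event $\nabla\psi\not\parallel\mathbf{1}_n$, whereas the paper treats them separately before observing they coincide.
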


The KKT condition \eqref{eq: KKT_alpha-1} can be seen as a fixed point equation for the likelihood ratio $L$ and the Lagrange multipliers $\tilde{\alpha},\beta$. We can derive an expansion for $L,\tilde{\alpha},\beta$ based on \eqref{eq: KKT_alpha-1}. We first expand $L$ in terms of $\tilde{\alpha},\beta$ based on the first equation of \eqref{eq: KKT_alpha-1}. To do that, we rewrite the first equation of \eqref{eq: KKT_alpha-1} as 
\begin{equation}\label{eq: fixed_point}
L_i = 1+ h(\tilde{\alpha}\frac{\partial}{\partial L_i}n\psi(\hat{P}L)-\tilde{\alpha}\beta)
\end{equation}
where $h = (\phi^{\prime})^{-1}-1$. Since $\phi^{\prime}(1)=0$, we have $h(0)=0$.   By arguing that $\tilde{\alpha} = O_p (n^{-1/2})$ and $\beta = O_p(1)$, we can show that $L\rightarrow 1$. With the knowledge that $L\rightarrow 1$, the RHS of \eqref{eq: fixed_point} could be further expanded, which will give us a more accurate approximation to $L$. Iterating this procedure, we can get an expansion of $L$ in terms of $\tilde{\alpha}$ and $\beta$ up to the desired order. Once we have the expansion of $L$ in terms of $\tilde{\alpha}$ and $\beta$, we can plug it back to the third and fourth equation of \eqref{eq: KKT_alpha-1} and solve for an expansion of $\tilde{\alpha}$ and $\beta$ using the same strategy.  

%We will do the detailed derivation following the road map depicted above in Section \ref{subsec: DRO_expansion_computation}. The proof of lemma \ref{lem: KKT} and some other needed technical lemmas can be found in Section \ref{subsec: proof_KKT} and Section \ref{subsec: technical lemmas}, respectively. 

%\subsection{Proof of Lemma \ref{lem: KKT}} \label{subsec: proof_KKT}

\subsection{Technical Lemmas}\label{subsec: technical lemmas}

We will investigate an expansion for $L$ based on the system of equations \eqref{eq: KKT_alpha-1}. To make the computation rigorous, we need to properly define the residuals
of the expansion. In what follows, we use $\bar{O}_{p}^{(1/m)}(a_n)$
to denote (measurable) functions $\Lambda(\cdot)$ such that $\hat{E}\Lambda(X)^{m}=O_{p}(a_n)$.
An immediate observation is that
\begin{lem}\label{lem: residual_product} 
if $\Lambda_{1}=\bar{O}_{p}^{(1/m_{1})}(1)$
and $\Lambda_{2}=\bar{O}_{p}^{(1/m_{2})}(1)$, then $\Lambda_{1}\Lambda_{2}=\bar{O}_{p}^{(1/m)}(1)$
where $1/m=1/m_{1}+1/m_{2}$.
\end{lem}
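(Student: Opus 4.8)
The plan is to recognize Lemma \ref{lem: residual_product} as a direct application of Hölder's inequality with respect to the empirical measure $\hat{P}$. Since $\hat{E}$ is expectation under $\hat{P}=\frac{1}{n}\sum_i\delta_{X_i}$, the quantity $\hat{E}|\Lambda(X)|^{m}$ is nothing but the $\frac{1}{n}$-weighted $\ell^{m}$ (pseudo-)norm of the finite vector $(\Lambda(X_1),\dots,\Lambda(X_n))$, so the ordinary finite-dimensional Hölder inequality applies verbatim and no measure-theoretic subtlety arises.

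First I would fix the conjugate exponents. Set $p=m_1/m$ and $q=m_2/m$. The defining relation $1/m=1/m_1+1/m_2$ gives $1/p+1/q=m/m_1+m/m_2=m(1/m_1+1/m_2)=1$, so $p$ and $q$ are conjugate. Moreover $1/m>1/m_1$ forces $m<m_1$, hence $p>1$, and symmetrically $q>1$, so these are legitimate Hölder exponents rather than degenerate ones.

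Next I would apply Hölder's inequality under $\hat{E}$ to the nonnegative functions $|\Lambda_1|^{m}$ and $|\Lambda_2|^{m}$:
\[
\hat{E}|\Lambda_1\Lambda_2|^{m}=\hat{E}\bigl(|\Lambda_1|^{m}|\Lambda_2|^{m}\bigr)\leq\bigl(\hat{E}|\Lambda_1|^{mp}\bigr)^{1/p}\bigl(\hat{E}|\Lambda_2|^{mq}\bigr)^{1/q}.
\]
Because $mp=m_1$ and $mq=m_2$, the right-hand side equals $\bigl(\hat{E}|\Lambda_1|^{m_1}\bigr)^{m/m_1}\bigl(\hat{E}|\Lambda_2|^{m_2}\bigr)^{m/m_2}$. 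By hypothesis each bracketed factor is $O_p(1)$; since the exponents $m/m_1$ and $m/m_2$ are fixed deterministic constants, raising an $O_p(1)$ quantity to these powers preserves $O_p(1)$ (the map $x\mapsto x^{m/m_1}$ is continuous and the factors are tight), and a product of two $O_p(1)$ terms is again $O_p(1)$. Thus $\hat{E}|\Lambda_1\Lambda_2|^{m}=O_p(1)$, which is precisely $\Lambda_1\Lambda_2=\bar{O}_{p}^{(1/m)}(1)$.

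The only point requiring care — which I would flag as the main (albeit minor) obstacle — is the gap between the moment written as $\hat{E}\Lambda^{m}$ in the definition of $\bar{O}_{p}^{(1/m)}$ and the absolute moment $\hat{E}|\Lambda|^{m}$ that Hölder controls. When the relevant exponents are even these coincide and the argument is immediate; in general I would read the $\bar{O}_{p}$ notation as a bound on the absolute moment, so that it genuinely quantifies the ``size'' of $\Lambda$, under which the display above closes the proof. A fully rigorous treatment would simply phrase the defining condition in terms of $\hat{E}|\Lambda|^{m}=O_p(a_n)$ from the outset, after which the Hölder step requires no further comment.
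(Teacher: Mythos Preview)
Your proof is correct and is essentially the same Hölder-inequality argument the paper gives, with conjugate exponents $p=m_1/m$ and $q=m_2/m$. You are in fact slightly more careful than the paper, which writes the bound as $\hat{E}\Lambda_1^{m_1}\hat{E}\Lambda_2^{m_2}$ without the outer exponents $m/m_1,m/m_2$ and without flagging the absolute-value issue you raise.
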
 
\begin{proof}
$\hat{E}\left(\Lambda_{1}\Lambda_{2}\right)^{m}=\hat{E}\left(\left(\Lambda_{1}^{m_{1}}\right)^{\frac{m}{m_{1}}}\left(\Lambda_{2}^{m_{2}}\right)^{\frac{m}{m_{2}}}\right)\leq\hat{E}\Lambda_{1}^{m_{1}}\hat{E}\Lambda_{2}^{m_{2}}=O_{p}(1)$
where the inequality follows from H\"older's inequality. 
\end{proof} 
By our Assumption \ref{assu: psi_expansion} part 2 on the influence functions, we can prove that
\begin{lem}
\label{lem: integrate_moment}Under Assumption \ref{assu: psi_expansion}, for
any function $\Lambda=\bar{O}_{p}^{(4/5)}(1)$ and $G(x)$ defined as $G(X):=\hat{E}_{Y}IF_{2}(X,Y;\hat{P})\Lambda(Y)$,
we have that $G=\bar{O}_{p}^{(1/5)}(1)$.
\end{lem}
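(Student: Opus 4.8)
The plan is to bound the fifth absolute moment of $G$ directly, exploiting the product structure $G(X)=\hat{E}_Y[IF_2(X,Y;\hat{P})\Lambda(Y)]$: for each fixed $X$ this is an inner average over $Y\sim\hat{P}$, so I would first control $|G(X)|$ pointwise in $X$ by a single application of Hölder's inequality in $Y$, and then average the resulting fifth power over $X\sim\hat{P}$. The entire argument hinges on choosing the Hölder exponents to match exactly the moments that are available.

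First I would fix $X$ and apply Hölder's inequality in the variable $Y$ with the conjugate exponents $5$ and $5/4$:
\[
|G(X)| = \left|\hat{E}_Y\left[IF_2(X,Y;\hat{P})\,\Lambda(Y)\right]\right| \le \left(\hat{E}_Y|IF_2(X,Y;\hat{P})|^5\right)^{1/5}\left(\hat{E}_Y|\Lambda(Y)|^{5/4}\right)^{4/5}.
\]
The second factor does not depend on $X$; since $\Lambda=\bar{O}_{p}^{(4/5)}(1)$ means precisely $\hat{E}|\Lambda|^{5/4}=O_p(1)$, this factor is $O_p(1)$, and I would abbreviate it as $C_\Lambda$.

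Next I would raise to the fifth power and average over $X\sim\hat{P}$, which gives
\[
\hat{E}_X|G(X)|^5 \le C_\Lambda^5\cdot \hat{E}_X\hat{E}_Y|IF_2(X,Y;\hat{P})|^5 = C_\Lambda^5\cdot \hat{E}\,|IF_2(X,Y;\hat{P})|^5,
\]
where in the last expression $X,Y\stackrel{\text{i.i.d.}}{\sim}\hat{P}$. By part 2 of Assumption \ref{assu: psi_expansion} (the case $m=5$), $\hat{E}|IF_2(X,Y;\hat{P})|^5=O_p(1)$, and $C_\Lambda^5=(O_p(1))^5=O_p(1)$, so $\hat{E}_X|G(X)|^5=O_p(1)$. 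By definition of the notation this is exactly the claim $G=\bar{O}_{p}^{(1/5)}(1)$.

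The main (and essentially only) subtlety is the calibration of the Hölder exponents: the moment $5/4$ that is available for $\Lambda$ forces the conjugate exponent $5$ on $IF_2$, and this is precisely the highest-order moment that Assumption \ref{assu: psi_expansion} part 2 controls. Any coarser pairing would either demand a higher moment of $\Lambda$ than is assumed or a moment of $IF_2$ beyond order $5$, so the hypotheses are exactly tight. I would also flag that the moment conditions must be read as absolute moments (so that the fractional order $5/4$ and the order $5$ are both legitimate), consistent with the way $\bar{O}_{p}^{(1/m)}$ is already used via Hölder in Lemma \ref{lem: residual_product}; notably, the argument makes no use of the permutation invariance or zero-marginal properties in part 3 of the assumption.
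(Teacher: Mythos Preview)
Your proof is correct and is essentially the same as the paper's: apply H\"older in $Y$ with exponents $5$ and $5/4$, then raise to the fifth power and average over $X$ to reduce to $\hat{E}|IF_2|^5\cdot(\hat{E}|\Lambda|^{5/4})^4=O_p(1)$. Your explicit use of absolute values is in fact cleaner than the paper's write-up, which omits them.
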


\begin{lem}
\label{lem: Integrate_moment_2}Under Assumption \ref{assu: psi_expansion},
for any function $\Lambda_{i}(x)$ ($i=1,2$) such that $\Lambda_{i}(X)=O_{p}^{(4/5)}(1)$
and $G(x)$ defined as $G(X):=\hat{E}_{Y,Z}IF_{3}(X,Y,Z)\Lambda_{1}(Y)\Lambda_{2}(Z)$,
we have that $G=\bar{O}_{p}^{(1/5)}(1)$.
\end{lem}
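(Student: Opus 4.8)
The plan is to reduce Lemma \ref{lem: Integrate_moment_2} to the single-variable argument already used for Lemma \ref{lem: integrate_moment}, by integrating out the auxiliary variables $Z$ and $Y$ \emph{one at a time} rather than simultaneously. The naive route of applying H\"older's inequality in a single step to the product $IF_{3}(X,Y,Z)\Lambda_{1}(Y)\Lambda_{2}(Z)$ over the product measure in $(Y,Z)$ does not work: to exploit the only available moment control on the $\Lambda_{i}$, namely $\hat{E}\Lambda_{i}^{5/4}=O_{p}(1)$, one must assign each $\Lambda_{i}$ the exponent $5/4$, whose conjugate weight is $4/5$; two such factors already consume more than the total budget (since $4/5+4/5>1$), which would force a negative and hence invalid exponent on $IF_{3}$. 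The nested approach circumvents this, because the fifth moment of the influence function is ``regenerated'' at each stage through the tower property.

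Concretely, I would first define the intermediate function $H(X,Y):=\hat{E}_{Z}\left[IF_{3}(X,Y,Z)\Lambda_{2}(Z)\right]$, so that $G(X)=\hat{E}_{Y}\left[H(X,Y)\Lambda_{1}(Y)\right]$. Applying H\"older's inequality in $Z$ with the conjugate pair $(5,5/4)$ gives, for each fixed $X,Y$,
\[
|H(X,Y)|\leq\left(\hat{E}_{Z}|IF_{3}(X,Y,Z)|^{5}\right)^{1/5}\left(\hat{E}_{Z}|\Lambda_{2}(Z)|^{5/4}\right)^{4/5},
\]
where the second factor equals some $O_{p}(1)$ quantity that does not depend on $(X,Y)$, by the hypothesis on $\Lambda_{2}$. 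Raising to the fifth power, taking $\hat{E}_{X,Y}$, and invoking the tower property collapses the bound to $O_{p}(1)\cdot\hat{E}_{X,Y,Z}|IF_{3}|^{5}$, which is $O_{p}(1)$ by the fifth-moment condition on $IF_{3}$ in Assumption \ref{assu: psi_expansion} part 2. Hence $\hat{E}_{X,Y}|H(X,Y)|^{5}=O_{p}(1)$, i.e.\ $H(X,\cdot)$ plays exactly the role that $IF_{2}$ played in Lemma \ref{lem: integrate_moment}.

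Finally I would repeat the identical H\"older step in $Y$, now pairing $H(X,\cdot)$ against $\Lambda_{1}$ with the conjugate pair $(5,5/4)$, to obtain
\[
|G(X)|\leq\left(\hat{E}_{Y}|H(X,Y)|^{5}\right)^{1/5}\left(\hat{E}_{Y}|\Lambda_{1}(Y)|^{5/4}\right)^{4/5},
\]
with the last factor again $O_{p}(1)$; raising to the fifth power and taking $\hat{E}_{X}$ yields $\hat{E}_{X}|G(X)|^{5}\leq O_{p}(1)\cdot\hat{E}_{X,Y}|H(X,Y)|^{5}=O_{p}(1)$, which is precisely the claim $G=\bar{O}_{p}^{(1/5)}(1)$. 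This second step is verbatim the proof of Lemma \ref{lem: integrate_moment} with $IF_{2}$ replaced by $H$. The one genuine obstacle is the exponent budgeting flagged above: one must peel off the variables sequentially so that the conjugate exponent $5$ always lands on a single influence-function factor whose fifth moment is controlled by the assumption, rather than spreading it across $IF_{3}$ and two $\Lambda_{i}$ at once. The remaining bookkeeping, namely checking that each $O_{p}(1)$ prefactor is uniform in the variable being eliminated so that it can be pulled outside the subsequent outer expectation, is routine.
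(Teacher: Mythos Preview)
Your proposal is correct and is precisely the natural extension that the paper intends by ``with a similar proof'': the two-variable case is handled by iterating the single-variable H\"older argument of Lemma~\ref{lem: integrate_moment}, first in $Z$ and then in $Y$, with the tower property restoring the fifth-moment control on the intermediate kernel $H$. Your observation that a one-shot three-factor H\"older step fails the exponent budget (since $4/5+4/5>1$) is exactly why the nested application is needed.
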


The next lemma estimates the orders of $\tilde{\alpha}$ and $\beta$
as $n\rightarrow\infty$.
\begin{lem}
\label{lem: order_alpha_beta}Under Assumptions \ref{assu: psi_expansion} and \ref{assu: phi_smoothness},
$\tilde{\alpha}=O_{p}(n^{-1/2})$, $\tilde{\alpha}^{-1}=O_{p}(n^{1/2})$,
$\beta=O_{p}(1)$.
\end{lem}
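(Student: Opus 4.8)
My plan is to exploit the fixed-point form \eqref{eq: fixed_point} of the first KKT equation together with the a priori bound of Lemma \ref{lem: L_bound}. Write $w_i := L_i-1$ (so that $\hat EL=1$ reads $\hat Ew=0$) and $D_i := n\frac{\partial}{\partial L_i}\psi(\hat PL)$, so that Lemma \ref{lem: KKT} gives $\phi'(L_i)=\tilde\alpha(D_i-\beta)$. Applying Lemma \ref{lem: L_bound} to the optimal $L$ yields the two-sided bound $\hat Ew^2 = \tfrac1n\|L-1\|_2^2 = \Theta_p(n^{-1})$, which is the quantitative input driving everything. I would first record the explicit form of $D_i$ obtained by differentiating \eqref{eq: psi_truncation}; using permutation invariance,
\[
D_i = IF_1(X_i;\hat P) + \hat E_Y\big[IF_2(X_i,Y;\hat P)w_Y\big] + \tfrac12\hat E_{Y,Z}\big[IF_3(X_i,Y,Z;\hat P)w_Yw_Z\big].
\]
The zero-marginal property (Assumption \ref{assu: psi_expansion}, part 3) then gives the exact identity $\hat ED=0$, while Cauchy--Schwarz together with Lemmas \ref{lem: integrate_moment}--\ref{lem: Integrate_moment_2} and $\hat Ew^2=O_p(n^{-1})$ show the last two summands are $o_p(1)$ in $L^2(\hat P)$; hence $\hat ED^2=\hat\kappa_2+o_p(1)=\Theta_p(1)$, bounded below because $1/\widehat{Var}\,IF_1=O_p(1)$.

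Next I would extract two scalar relations by averaging. Taking $\hat E$ of the KKT equation and using $\hat ED=0$ gives $-\tilde\alpha\beta=\hat E\phi'(L)$; Taylor expanding $\phi'$ about $1$ (legitimate since $L_i$ lies in the bounded range $[c_0,C_0]$ where $\phi$ is $C^5$ by Assumption \ref{assu: phi_smoothness}) and using $\hat Ew=0$ gives $\hat E\phi'(L)=\tfrac12\phi'''(1)\hat Ew^2+O_p(\hat E|w|^3)=O_p(n^{-1})$, so $\tilde\alpha\beta=O_p(n^{-1})$. For the upper bound on $\tilde\alpha$, I invert $\phi'$ locally, $w_i=\tilde\alpha(D_i-\beta)/\phi''(1)+r_i$ with $|r_i|\le Mw_i^2$, and estimate $\tilde\alpha^2\hat E(D-\beta)^2\le\phi''(1)^2\,\hat E(|w|+Mw^2)^2=O_p(\hat Ew^2)=O_p(n^{-1})$ using the crude consequences $\hat E|w|^3,\hat Ew^4\le C_\phi\hat Ew^2$ of Lemma \ref{lem: L_bound}. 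Since $\hat E(D-\beta)^2=\hat ED^2+\beta^2\ge\hat ED^2=\Theta_p(1)$, this forces $\tilde\alpha=O_p(n^{-1/2})$.

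The delicate part is the matching lower bound $\tilde\alpha^{-1}=O_p(n^{1/2})$, because the crude remainder bounds just used are only of the same order as $\hat Ew^2$ and could in principle cancel the main term. To sharpen them I would bootstrap: with $\tilde\alpha=O_p(n^{-1/2})$ in hand, the moment bounds of Assumption \ref{assu: psi_expansion} part 2 give $\max_i|D_i|=o_p(n^{1/2})$ (e.g. $\max_i|IF_1(X_i)|=O_p(n^{1/4})$ from $\hat EIF_1^4=O_p(1)$, with the $IF_2,IF_3$ contributions handled via Lemmas \ref{lem: integrate_moment}--\ref{lem: Integrate_moment_2}), while $\tilde\alpha|\beta|=O_p(n^{-1})$; hence $\max_i|\tilde\alpha(D_i-\beta)|=o_p(1)$, and continuity of $h$ with $h(0)=0$ gives $\|w\|_\infty=o_p(1)$. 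This upgrades the remainders to $\hat E|w|^3,\hat Ew^4=o_p(\hat Ew^2)$, yielding the refined identity $\hat Ew^2=\tilde\alpha^2\phi''(1)^{-2}(\hat ED^2+\beta^2)(1+o_p(1))$. Combining with $\tilde\alpha^2\beta^2=(\tilde\alpha\beta)^2=O_p(n^{-2})=o_p(\hat Ew^2)$ and the lower bound $\hat Ew^2=\Theta_p(n^{-1})$, I conclude $\tilde\alpha^2\hat ED^2=\Theta_p(n^{-1})$, and since $\hat ED^2=\Theta_p(1)$ this gives $\tilde\alpha=\Theta_p(n^{-1/2})$, i.e. both $\tilde\alpha=O_p(n^{-1/2})$ and $\tilde\alpha^{-1}=O_p(n^{1/2})$. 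Finally $\beta=O_p(n^{-1}\tilde\alpha^{-1})=O_p(n^{-1/2})=O_p(1)$. I expect the bootstrap control of $\|w\|_\infty$, and thereby the sharp remainder estimates needed for the lower bound on $\tilde\alpha$, to be the main obstacle, since the $\ell^2$ bound of Lemma \ref{lem: L_bound} alone is too weak and one must genuinely feed in the higher-moment conditions on the influence functions.
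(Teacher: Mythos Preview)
Your argument is correct, but it takes a more circuitous route than the paper's. The paper avoids the bootstrap you flag as the main obstacle by directly exploiting the uniform strong convexity clause $\phi''(x)>\delta$ on $[c_0,C_0]$ in Assumption~\ref{assu: phi_smoothness}: writing $\phi'(L_i)=\phi''(\xi_i)(L_i-1)$ with $\phi''(\xi_i)\in[\delta,C_1]$ on this range, Lemma~\ref{lem: L_bound} yields the \emph{two-sided} bound $c_3\le\sum_i\phi'(L_i)^2\le C_3$, hence $c_3\le\tilde\alpha^2\sum_i(D_i-\beta)^2\le C_3$. A second-order Taylor expansion of $L_i=1+h(\tilde\alpha(D_i-\beta))$, whose remainder is controlled by this same bound (not by $\|w\|_\infty$), combined with $\hat EL=1$ gives $\tilde\alpha\beta=\frac{1}{n}\sum_i\tilde\alpha D_i+O_p(n^{-1})$; substituting back (the cross term vanishes) delivers both the upper and lower bounds on $\tilde\alpha$ simultaneously from $\sum_i(D_i-\bar D)^2=\Theta_p(n)$, and then $\beta=O_p(1)$ follows. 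No $\|w\|_\infty\to0$ step or appeal to higher moments of the influence functions is needed here.

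Your approach, by contrast, only uses local information about $\phi$ near $1$ once the bootstrap has established $\|w\|_\infty=o_p(1)$, so in principle it would survive dropping the global strong-convexity clause---at the cost of the extra $\max_i|D_i|$ argument. Note also that your clean identity $\hat ED=0$ (from the zero-marginal property), which the paper does not isolate, would in fact simplify the paper's own argument: it makes $\frac{1}{n}\sum_i D_i=0$ exactly, so $\tilde\alpha\beta=O_p(n^{-1})$ and $\sum_i(D_i-\bar D)^2=\sum_iD_i^2$ directly.
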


The proofs of Lemmas \ref{lem: KKT}, \ref{lem: integrate_moment}, \ref{lem: Integrate_moment_2}, and \ref{lem: order_alpha_beta} are presented in Appendix \ref{sec: proof_technical}.
%Lastly, we explain why the influence functions can be written in the canonical form in Convention \ref{IF_convention}:

\subsection{Proof of Theorem \ref{thm: DRO_expansion}} \label{subsec: DRO_expansion_computation}

\paragraph*{Notations for this proof} To facilitate computation, we introduce some notations. For any vector $(K_{i})_{i=1,2,\dots,n}$, we denote \underline{$K_i$} as a random variable whose value is $K_i$ when $X=X_i$. In particular, this means $\underline{L_i}=L$ under $\hat{P}$.  %We will frequently write an approximation of $L$ in the following form:
%\[
%L_{i}=1+L_{1i}+L_{2i}+\bar{O}_{p}^{(1/m)}(n^{-3/2})
%\]
Corresponding to the notation introduced in the previous section , $\underline{\Lambda_i} = \bar{O}_{p}^{(1/m)}(n^{-3/2})$ means $\hat{E}\Lambda^m = \frac{1}{n}\sum_{i=1}^{n}\Lambda^{m}_i=O_{p}(n^{-3/2})$.
For the influence functions, we introduce the notation: $IF_{i}:=IF_{1}(X_{i};\hat{P})$,
$IF_{ij}:=IF_{2}(X_{i},X_{j};\hat{P})$, $IF_{ijk}:=IF_{3}(X_{i},X_{j},X_{k};\hat{P})$.
We sum over repeated subscripts (similar to Einstein's convention for summation), e.g., $IF_{ij}(L_{j}-1)=\sum_{j=1}^{n}IF_{ij}(L_{j}-1)$. However, $i$ is a special index: we will not sum over $i$ unless we explicitly write $\sum_i$, even if $i$ appears twice. In addition, the definitions of $\hat{\kappa}_2, \hat{\gamma}, \hat{\mu}_{2,a},\hat{\mu}_{2,b},\hat{\mu}_{2,c},\hat{\mu}_{3,c},\hat{\mu}_{4}$ are given as in the statement of this theorem. 

Our starting point is the necessary condition \eqref{eq: KKT_alpha-1}
for the optimal $L$, which holds with probability $1-o(1)$. The
computation is divided into three parts.

\subsubsection{Expansion of \texorpdfstring{$L$}{TEXT} in terms of \texorpdfstring{$\tilde{\alpha}$}{TEXT} and \texorpdfstring{$\beta$}{TEXT}}

From \eqref{eq: psi_truncation} we know that $\psi(\hat{P}L)$ can be written as a polynomial of vector $L$:
\[
\psi(\hat{P}L)= \psi(\hat{P}) + \frac{1}{n}\sum_i IF_i(L_i-1) + \frac{1}{2}\sum_{i,j}IF_{ij}(L_i-1)(L_j-1)+\frac{1}{6}\sum_{i,j,k}IF_{ijk}(L_i-1)(L_j-1)(L_k-1) 
\]
We let
\begin{equation}
D_{Li}:=\tilde{\alpha}\frac{\partial}{\partial L_{i}}n\psi(\hat{P}L)-\tilde{\alpha}\beta=\tilde{\alpha}\left(IF_{i}+\frac{1}{n}IF_{ij}(L_{j}-1)+\frac{1}{2n^{2}}IF_{ijk}(L_{j}-1)(L_{k}-1)\right)-\tilde{\alpha}\beta.\label{eq: DL_expression}
\end{equation}

By Taylor expansion of \eqref{eq: fixed_point} we know that
\begin{equation}
L_i=1+h^{\prime}(0)D_{Li}+\frac{1}{2}h^{\prime\prime}(0)D_{Li}^{2}+\frac{1}{6}h^{\prime\prime\prime}(0)D_{Li}^{3}+\frac{1}{24}h^{(4)}(\xi_{L})D_{Li}^{4}\label{eq: L_expand_iter}
\end{equation}
Here $\xi_L$ lies in the line segment between 0 and $\tilde{\alpha}\frac{\partial}{\partial L_i}n\psi(\hat{P}L)-\tilde{\alpha}\beta$. \eqref{eq: L_expand_iter} is the fixed point equation for $L$ that we will iterate on. Note that, since $L_i\in [c_0,C_0]$, by Assumption \ref{assu: phi_smoothness} we know that $h^{(4)}(\xi_{L})$ is bounded.

By Assumption \ref{assu: psi_expansion} part 2 and Lemmas \ref{lem: L_bound},
\ref{lem: integrate_moment}, and \ref{lem: Integrate_moment_2},
we have that
\begin{equation}
\begin{array}{c}
\underline{IF_{i}}=\bar{O}_{p}^{(1/5)}(1)\\
\underline{\frac{1}{n}IF_{ij}(L_{j}-1)}=n^{-1/2}\bar{O}_{p}^{(1/5)}(1)\\
\underline{\frac{1}{n^{2}}IF_{ijk}(L_{j}-1)(L_{k}-1)}=n^{-1}\bar{O}_{p}^{(1/5)}(1)
\end{array}\label{eq: O_bar_property}
\end{equation} Hence from Lemma \ref{lem: order_alpha_beta} and \eqref{eq: DL_expression}, we have that
\begin{equation}\label{eq: D_L_1st} 
\underline{D_{Li}}=\tilde{\alpha}\underline{(IF_{i}-\beta)}+\bar{O}_{p}^{(1/5)}(n^{-1})
\end{equation}
(which also implies $\underline{D_{Li}}=\bar{O}_{p}^{(1/5)}(n^{-1/2})$). Plugging this in \eqref{eq: L_expand_iter}, we get
\[
\underline{L_{i}}=1+h^{\prime}(0)\tilde{\alpha}\underline{(IF_{i}-\beta)}+n^{-1}\bar{O}_{p}^{(4/5)}(1).
\]
Then plugging in the above to the expression for $D_{L}$ in \eqref{eq: DL_expression},
and using Lemmas \ref{lem: integrate_moment} and \ref{lem: Integrate_moment_2} to bound the residual (notice that $\bar{O}_{p}^{(4/5)}(1)$ satisfies the
condition for Lemmas \ref{lem: integrate_moment} and \ref{lem: Integrate_moment_2}), we have that
\[
\underline{D_{Li}}=\tilde{\alpha}\underline{(IF_{i}-\beta)}+\frac{1}{n}\tilde{\alpha}h^{\prime}(0)\underline{IF_{ij}\tilde{\alpha}(IF_{j}-\beta)}+\bar{O}_{p}^{(1/5)}(n^{-3/2})
\]
Then by plugging this into \eqref{eq: L_expand_iter}, we have that
\[
\underline{L_{i}}=1+h^{\prime}(0)\tilde{\alpha}\underline{(IF_{i}-\beta)}+\frac{1}{n}h^{\prime}(0)\tilde{\alpha}^2\underline{IF_{ij}h^{\prime}(0)(IF_{j}-\beta)}+\frac{1}{2}h^{\prime\prime}(0)\left(\tilde{\alpha}\underline{(IF_{i}-\beta)}\right)^{2}+n^{-3/2}\bar{O}_{p}^{(4/5)}(1)
\]
Then, again by plugging the above into the expression for $D_{L}$
in \eqref{eq: DL_expression}, we have that 
\begin{align*}
\underline{D_{Li}}= & \tilde{\alpha}\underline{(IF_{i}-\beta)}+\frac{1}{n}\tilde{\alpha}^2h^{\prime}(0)\underline{IF_{ij}(IF_{j}-\beta)}+\tilde{\alpha}^3\frac{1}{n^{2}}h^{\prime}(0)^2\underline{IF_{ij}IF_{jk}(IF_{k}-\beta)}\\
 & +\frac{1}{n}\tilde{\alpha}^3\frac{1}{2}h^{\prime\prime}(0)\underline{IF_{ij}\left(IF_{j}-\beta\right)^{2}}+\frac{1}{2}\tilde{\alpha}^3h^{\prime}(0)^2\underline{IF_{ijk}(IF_{j}-\beta)(IF_{k}-\beta)}+n^{-2}\bar{O}_{p}^{(1/5)}(1)
\end{align*}
Then, once again by plugging the above equation to the expression for $L$
in \eqref{eq: L_expand_iter}, we have that
\begin{align}
\underline{L_{i}}= & 1+h^{\prime}(0)\tilde{\alpha}\underline{(IF_{i}-\beta)}+\frac{1}{n}h^{\prime}(0)\tilde{\alpha}^2h^{\prime}(0)\underline{IF_{ij}(IF_{j}-\beta)}+\frac{1}{2}h^{\prime\prime}(0)\left(\tilde{\alpha}\underline{(IF_{i}-\beta)}\right)^{2}\nonumber \\
 & +h^{\prime}(0)^3\tilde{\alpha}^3\frac{1}{n^{2}}\underline{IF_{ij}IF_{jk}(IF_{k}-\beta)}\nonumber \\
 & +h^{\prime}(0)\frac{1}{n}\tilde{\alpha}^3\frac{1}{2}h^{\prime\prime}(0)\underline{IF_{ij}\left(IF_{j}-\beta\right)^{2}}\nonumber \\
 & +\frac{1}{2}\tilde{\alpha}^3h^{\prime}(0)^3\underline{IF_{ijk}(IF_{j}-\beta)(IF_{k}-\beta)}\nonumber \\
 & +h^{\prime\prime}(0)\tilde{\alpha}^3\frac{1}{n}h^{\prime}(0)\underline{IF_{ij}(IF_{i}-\beta)(IF_{j}-\beta)}\nonumber \\
 & +\frac{1}{6}h^{\prime\prime\prime}(0)\tilde{\alpha}^3\left(\underline{IF_{i}-\beta}\right)^{3}+n^{-2}\bar{O}_{p}^{(4/5)}(1)\nonumber \\
=: & 1+\underline{L_{1i}}+\underline{L_{2i}}+\underline{L_{3i}}+n^{-2}\bar{O}_{p}^{(4/5)}(1).\label{eq: L_expand_alpha_beta}
\end{align}
Here, in the last step, we group the terms according to the degree of $\tilde{\alpha}$, i.e., $\underline{L_{1i}}=h^{\prime}(0)\tilde{\alpha}\underline{(IF_{i}-\beta)},\underline{L_{2i}}=\frac{1}{n}h^{\prime}(0)\tilde{\alpha}^2h^{\prime}(0)\underline{IF_{ij}(IF_{j}-\beta)}+\frac{1}{2}h^{\prime\prime}(0)\left(\tilde{\alpha}\underline{(IF_{i}-\beta)}\right)^{2}$
and $L_{3}$ stands for the rest of the terms (except the residual). As we mentioned earlier, we only sum over $j$ in expression $\frac{1}{n}h^{\prime}(0)\tilde{\alpha}^2h^{\prime}(0)\underline{IF_{ij}(IF_{j}-\beta)}+\frac{1}{2}h^{\prime\prime}(0)\left(\tilde{\alpha}\underline{(IF_{i}-\beta)}\right)^{2}$. From Assumption \ref{assu: psi_expansion} part 2 and Lemmas \ref{lem: integrate_moment}, \ref{lem: Integrate_moment_2},
we have that $\underline{L_{ki}}=n^{-k/2}\bar{O}_{p}^{(k/5)}(1)$. With a similar
procedure, the following lower-order expansions are also valid: $\underline{L_{i}}=1+\underline{L_{1i}}+\underline{L_{2i}}+n^{-3/2}\bar{O}_{p}^{(3/5)}(1)$, $\underline{L_{i}}=1+\underline{L_{1i}}+n^{-1}\bar{O}_{p}^{(2/5)}(1)$, and $\underline{L_{i}}=1+n^{-1/2}\bar{O}_{p}^{(1/5)}(1)$.

\subsubsection{Expansion of \texorpdfstring{$\beta$}{TEXT} in terms of \texorpdfstring{$\tilde{\alpha}$}{TEXT}}

Denote $L_k = \underline{L_{ki}},k=1,2,3$. From the expansion for $L$ given in \eqref{eq: L_expand_alpha_beta},
by taking expectation, we have that 
%(here similar to the relation between $L$ and $L_i$, $L_k$ is defined such that $L_k=L_{ki}$ when $X=X_i$. This uniquely determines $L_k$ under $\hat{P}$.)
\[
\hat{E}L=1+\hat{E}L_{1}+\hat{E}L_{2}+\hat{E}L_{3}+O_{p}(n^{-2}).
\]
From the constraint, we have $\hat{E}L=1$, so we have that 
\[
\hat{E}L_{1}+\hat{E}L_{2}+\hat{E}L_{3}=O_{p}(n^{-2})
\]

By plugging in $L_{1}$ and a little algebra, from the above equation
we can conclude that (here we used $\frac{1}{\tilde{\alpha}}=O_{p}(n^{1/2})$
and multiplied $\tilde{\alpha}^{-1}$ on both sides)
\begin{equation}
\beta=\hat{E}IF_{1}(X,\hat{P})+\frac{1}{\tilde{\alpha}h^{\prime}(0)}\hat{E}L_{2}+\frac{1}{\tilde{\alpha}h^{\prime}(0)}\hat{E}L_{3}+O_{p}(n^{-3/2})\label{eq: beta_iter}
\end{equation}
The above equation would be the equation that we iterate on. Since $L_{k}=n^{-k/2}\bar{O}_{p}^{(k/5)}(1)$, we
have that $\hat{E}L_{k}=O_{p}(n^{-k/2})$ and hence from \eqref{eq: beta_iter}
we have that
\begin{equation}
\beta=\hat{E}IF_{1}+O_{p}(n^{-1/2})=O_{p}(n^{-1/2}).\label{eq: beta_approximation_1}
\end{equation}
Here the second equality holds since the marginal expectations of the influence functions are 0.

Next, we keep two terms in \eqref{eq: beta_iter} and write
\[
\beta=\hat{E}IF_{1}+\frac{1}{\tilde{\alpha}h^{\prime}(0)}\hat{E}L_{2}+O_{p}(n^{-1}).
\]
 Note that $\hat{E}L_2 = \frac{1}{n^2}\sum_{i,j}\tilde{\alpha}^2h^{\prime}(0)^2 IF_{ij}(IF_j-\beta)+ \frac{1}{2}h^{\prime\prime}(0)\tilde{\alpha}^2\hat{E}IF_{1}^{2}= \frac{1}{2}h^{\prime\prime}(0)\tilde{\alpha}^2\hat{E}IF_{1}^{2}$ where we used $\sum_i IF_{ij}=0$ in the last equality. Plugging this into the preceding equality and using the equation $\hat{E}IF_1(X,\hat{P})=0$, we get
\[
\beta=\frac{1}{2}\frac{h^{\prime\prime}(0)}{h^{\prime}(0)}\tilde{\alpha}\hat{\kappa}_2+O_{p}(n^{-1})
\]
Plugging this (and the expression for $L_k$) into the RHS of \eqref{eq: beta_iter}, we get that
\begin{align*}
\beta= & \frac{1}{2}\frac{h^{\prime\prime}(0)}{h^{\prime}(0)}\tilde{\alpha}\hat{\kappa}_2+h^{\prime\prime}(0)\tilde{\alpha}^{2}\hat{\mu}_{2,c}+\frac{1}{6}h^{\prime\prime\prime}(0)\tilde{\alpha}^{2}\hat{\gamma}+O_{p}(n^{-3/2}).
\end{align*}
Here, the definition of $\hat{\mu}_{2,c}$ can be found in the statement of this theorem. It can also be expressed as $\hat{\mu}_{2,c} = \frac{1}{n^2}\sum_{i,j}IF_{ij}IF_iIF_j$. The preceding equation is the desired expansion of $\beta$ in terms of $\tilde{\alpha}$.
Plugging this into the expression for $L$ in \eqref{eq: L_expand_alpha_beta},
we get an approximation for $L$ given by 
\begin{align}
L = \underline{L_{i}}= & 1+h^{\prime}(0)\tilde{\alpha}\underline{IF_{i}}+\frac{1}{n}\tilde{\alpha}^2h^{\prime}(0)^2IF_{ij}(IF_{j})+\frac{1}{2}h^{\prime\prime}(0)\left(\tilde{\alpha}\underline{IF_{i}}\right)^{2}\nonumber \\
 & -\tilde{\alpha}^2\frac{1}{2}{h^{\prime\prime}(0)}\hat{\kappa}_2-h^{\prime}(0)\tilde{\alpha}h^{\prime\prime}(0)\tilde{\alpha}^{2}\hat{\mu}_{2,c}\nonumber \\
 & -\tilde{\alpha}\frac{1}{6}h^{\prime\prime\prime}(0)\tilde{\alpha}^{2}\hat{\gamma} -h^{\prime\prime}(0)\tilde{\alpha}^{2}\underline{IF_{i}}\frac{1}{2}\frac{h^{\prime\prime}(0)}{h^{\prime}(0)}\tilde{\alpha}\hat{\kappa}_2 +h^{\prime}(0)^{3}\tilde{\alpha}^{3}\frac{1}{n^{2}}\underline{IF_{ij}IF_{jk}IF_{k}}\nonumber \\
 & +h^{\prime}(0)\frac{1}{n}\tilde{\alpha}^{3}\frac{1}{2}h^{\prime\prime}(0)\underline{IF_{ij}\left(IF_{j}\right)^{2}} +h^{\prime}(0)^{3}\frac{1}{6}\tilde{\alpha}^{3}\underline{IF_{ijk}IF_{j}IF_{k}}\nonumber \\
 & +h^{\prime\prime}(0)h^{\prime}(0)\tilde{\alpha}^{3}\frac{1}{n}\underline{IF_{ij}IF_{i}IF_{j}} +\frac{1}{6}h^{\prime\prime\prime}(0)\left(\tilde{\alpha}(\underline{IF_{i}})\right)^{3}+n^{-2}\bar{O}_{p}^{(4/5)}(1)\label{eq: L_expand_wrt_alpha}\\
=: & 1+L_{1}^{\prime}+L_{2}^{\prime}+L_{3}^{\prime}+n^{-2}\bar{O}_{p}^{(4/5)}(1).\nonumber 
\end{align}
Note that in the above expression, we only sum over $j$ when we write $IF_{ij}IF_iIF_j$. Here in the last step, we group the terms according to the degree of $\tilde{\alpha}$:
\[
L_{1i}^{\prime}=h^{\prime}(0)\tilde{\alpha}IF_{i}
\]
\[
L_{2i}^{\prime}=\frac{1}{n}h^{\prime}(0)\tilde{\alpha}IF_{ij}h^{\prime}(0)\tilde{\alpha}(IF_{j})+\frac{1}{2}h^{\prime\prime}(0)\left(\tilde{\alpha}IF_{i}\right)^{2}-\frac{1}{2}h^{\prime\prime}(0)\tilde{\alpha}^{2}\hat{E}IF_{1}^{2}
\]
and $L_{3}^{\prime}$ is the rest of terms in \eqref{eq: L_expand_wrt_alpha},
except for the residual. Similar to $L_{ki}$, we have that $\underline{L_{ki}^{\prime}}=n^{-k/2}\bar{O}_{p}^{(k/5)}(1)$.

\subsubsection{Asymptotic expansion of \texorpdfstring{$\tilde{\alpha}$}{TEXT}}\label{subsec: alpha_expansion}

From the boundedness of $\phi^{(5)}(\xi)$ when $\xi$ is bounded, and
that $L$ has expansion $1+n^{-1/2}\bar{O}_{p}^{(1/5)}(1)$, by Taylor
expansion we can show that

\begin{align*}
\hat{E}\phi(L)= & \frac{1}{2}\phi^{\prime\prime}(1)\hat{E}(L-1)^{2}+\frac{1}{6}\phi^{\prime\prime\prime}(1)\hat{E}(L-1)^{3}+\frac{1}{24}\phi^{(4)}(1)\hat{E}(L-1)^{4}+\hat{E}\phi^{(5)}(\xi)(L-1)^{5}\\
= & \frac{1}{2}\phi^{\prime\prime}(1)\hat{E}(L-1)^{2}+\frac{1}{6}\phi^{\prime\prime\prime}(1)\hat{E}(L-1)^{3}+\frac{1}{24}\phi^{(4)}(1)\hat{E}(L-1)^{4}+O_{p}(n^{-5/2})
\end{align*}
In the second inequality, we used that 
\[
\left|\hat{E}\phi^{(5)}(\xi)(L-1)^{5}\right|\leq\left(\sup_{x:\phi(x)\leq\frac{q}{2}}\left|\phi^{(5)}(x)\right|\right)\hat{E}|L-1|^{5}=\left(\sup_{x:\phi(x)\leq\frac{q}{2}}\left|\phi^{(5)}(x)\right|\right)\hat{E}|n^{-1/2}\bar{O}_{p}^{(1/5)}(1)|^{5}=O_{p}(n^{-5/2}).
\]
Plugging in the expansion for $L$ in \eqref{eq: L_expand_wrt_alpha},
we obtain that (proof given in Appendix \ref{sec: proof_technical})

\begin{equation}\label{eq: phi_L_expansion} 
\hat{E}\phi(L)=\frac{1}{2}\phi^{\prime\prime}(1)\hat{E}(L_{1}^{\prime2}+L_{2}^{\prime2}+2L_{1}^{\prime}L_{2}^{\prime}+2L_{1}^{\prime}L_{3}^{\prime})+\frac{1}{6}\phi^{\prime\prime\prime}(1)\hat{E}\left(L_{1}^{\prime3}+3L_{1}^{\prime2}L_{2}^{\prime}\right)+\frac{1}{24}\phi^{(4)}(1)\hat{E}L_{1}^{\prime4}+O_{p}(n^{-5/2})
\end{equation}

From the constraint $\hat{E}\phi(L)=\frac{q}{2n}$, we have
\begin{equation}
\frac{q}{2n}=\frac{1}{2}\phi^{\prime\prime}(1)\hat{E}(L_{1}^{\prime2}+L_{2}^{\prime2}+2L_{1}^{\prime}L_{2}^{\prime}+2L_{1}^{\prime}L_{3}^{\prime})+\frac{1}{6}\phi^{\prime\prime\prime}(1)\hat{E}\left(L_{1}^{\prime3}+3L_{1}^{\prime2}L_{2}^{\prime}\right)+\frac{1}{24}\phi^{(4)}(1)\hat{E}L_{1}^{\prime4}+O_{p}(n^{-5/2})\label{eq: alpha_iter}
\end{equation}

We will compute the asymptotic expansion of $\tilde{\alpha}$ based
on this equation. To begin, by taking the $O_p(n^{-1})$ part of \eqref{eq: alpha_iter}, we have that
\[
\frac{1}{2}\phi^{\prime\prime}(1)\hat{E}L_{1}^{\prime2}-\frac{q}{2n}=O_{p}(n^{-3/2}).
\]
Plugging in the expression for $L_1^{\prime}$, we get
\[
\frac{1}{2}\phi^{\prime\prime}(1)h^{\prime}(0)^{2}\tilde{\alpha}^{2}\hat{\kappa_2}-\frac{q}{2n}=O_{p}(n^{-3/2})
\]
which implies (note that $\tilde{\alpha}>0$ and by our assumption, $\frac{1}{\hat{E}IF_{1}^{2}}=O_{p}(1)$)
\begin{equation}
\tilde{\alpha}=\sqrt{\frac{q}{n\left(\phi^{\prime\prime}(1)h^{\prime}(0)^{2}\hat{\kappa}_2\right)}}+O_{p}(n^{-1})=\sqrt{\frac{q}{n\left(h^{\prime}(0)\hat{E}IF_{1}^{2}\right)}}+O_{p}(n^{-1}).\label{eq: alpha_app_1}
\end{equation}
Now we go to the next iteration. Taking the $O_p(n^{-3/2})$ part of \eqref{eq: alpha_iter}, we get
\[
\frac{1}{2}\phi^{\prime\prime}(1)\hat{E}\left(L_{1}^{\prime2}+2L_{1}^{\prime}L_{2}^{\prime}\right)+\frac{1}{6}\phi^{\prime\prime\prime}(1)\hat{E}L_{1}^{\prime3}-\frac{q}{2n}=O_{p}(n^{-2})
\]
which implies that
\begin{equation}\label{eq: alpha_equation_2nd}
\frac{1}{2}\phi^{\prime\prime}(1)h^{\prime}(0)^{2}\tilde{\alpha}^{2}\hat{E}IF_{1}^{2}=\frac{q}{2n}-\frac{1}{2}\phi^{\prime\prime}(1)\hat{E}\left(2L_{1}^{\prime}L_{2}^{\prime}\right)-\frac{1}{6}\phi^{\prime\prime\prime}(1)\hat{E}\left(L_{1}^{\prime}\right)^{3}+O_{p}(n^{-2})
\end{equation}
Plugging in the expression for $L_{1}^{\prime}$, $L_{2}^{\prime}$ and \eqref{eq: alpha_app_1} in the RHS, we obtain that
\[
\tilde{\alpha}=\sqrt{\frac{q}{n\left(\phi^{\prime\prime}(1)h^{\prime}(0)^{2}\hat{\kappa}_{2}\right)}\left(1-\sqrt{\frac{q}{n\left(h^{\prime}(0)\hat{\mu}_{2}\right)}}\frac{h^{\prime}(0)^{2}\hat{\mu}_{2,c}-\frac{1}{3}\frac{\phi^{\prime\prime\prime}}{\phi^{\prime\prime3}}\hat{\mu}_{3}}{\frac{1}{2}h^{\prime}(0)\hat{\mu}_{2}}\right)}+O_{p}(n^{-3/2})
\]
By Taylor expansion on the function $\sqrt{1+x}$ around $x=0$, we get 
\begin{equation}
\tilde{\alpha}=\sqrt{\frac{q}{n\left(h^{\prime}(0)\hat{\kappa}_2\right)}}-\frac{q}{n\left(h^{\prime}(0)\hat{\kappa}_{2}\right)}\frac{h^{\prime}(0)^{2}\hat{\mu}_{2,c}-\frac{1}{3}\frac{\phi^{\prime\prime\prime}}{\phi^{\prime\prime3}}\hat{\gamma}}{h^{\prime}(0)\hat{\kappa}_{2}}+O_{p}(n^{-3/2}).\label{eq: alpha_expansion}
\end{equation}
We will see that it suffices to compute
the expansion of $\tilde{\alpha}$ up to this order. Denote the residual above as $r$ (so we have $r=O_p(n^{-3/2})$) and write the above as $\tilde{\alpha}= \tilde{\alpha}_{0}+r$.
%expansion as
%\begin{equation}
%\tilde{\alpha}=\sqrt{\frac{q}{n\left(h^{\prime}(0)\hat{E}IF_{1}^{2}\right)}}-\frac{q}{n\left(h^{\prime}(0)\hat{\mu}_{2}\right)}\frac{h^{\prime}(0)^{2}\hat{\mu}_{2,c}-\frac{1}{3}\frac{\phi^{\prime\prime\prime}}{\phi^{\prime\prime3}}\hat{\mu}_{3}}{h^{\prime}(0)\hat{\mu}_{2}}+r=:\tilde{\alpha}_{0}+r.\label{eq: alpha_expansion_with_r}
%\end{equation}

\subsubsection{Expansion of the optimal value of the maximization problem}\label{subsec: expansion_optimal}

By Assumption \ref{assu: psi_expansion} part 2, our expansion of $L$ given
in \eqref{eq: L_expand_wrt_alpha} and the estimation of order $L_{k}^{\prime}=n^{-k/2}\bar{O}_{p}^{(k/5)}(1)$,
we have that the optimal value is given by 
\begin{align*}
\psi(\hat{P}L)= & \psi(\hat{P}) + \frac{1}{n}\sum_i IF_i(L_i-1) + \frac{1}{2n^2}\sum_{i,j}IF_{ij}(L_i-1)(L_j-1)\\
& +\frac{1}{6n^3}\sum_{i,j,k}IF_{ijk}(L_i-1)(L_j-1)(L_k-1) \\
 = &\frac{1}{n}\sum_i IF_i(L_{1i}^\prime+L_{2i}^{\prime}+L_{3i}^{\prime}) + \frac{1}{2n^2}\sum_{i,j}IF_{ij}(L_{1i}^{\prime}L_{1j}^{\prime}+2L_{2i}^{\prime}L_{1j}^{\prime})\\
 &+\frac{1}{6n^3}\sum_{i,j,k}IF_{ijk}L_{1i}^{\prime}L_{1j}^{\prime}L_{1k}^{\prime} +O_{p}(n^{-2})
\end{align*}
Since we have the constraint $\hat{E}[\phi(L)]=\frac{q}{2n}$,
the optimal value is equal to 
\begin{align}
 & \frac{1}{n}\sum_{i}IF_{i}(L_{1i}^{\prime}+L_{2i}^{\prime}+L_{3i}^{\prime})+\frac{1}{2n^{2}}\sum_{i,j}IF_{ij}(L_{1i}^{\prime}L_{1j}^{\prime}+2L_{2i}^{\prime}L_{1j}^{\prime})\label{eq: plug_in_expression}\\
 & +\frac{1}{6n^{3}}\sum_{i,j,k}IF_{ijk}L_{1i}^{\prime}L_{1j}^{\prime}L_{1k}^{\prime}-\tilde{\alpha}_{0}^{-1}\left[\hat{E}[\phi(L)]-\frac{q}{2n}\right]+O_{p}(n^{-2})\nonumber 
\end{align}
Plugging in $\hat{E}[\phi(L)]$ as in \eqref{eq: alpha_iter}, we can see that the contribution of $r$ to the $O_p(n^{-3/2})$ term is 0. Indeed, $r$ contributes to $O_p(n^{-3/2})$ only through $\frac{1}{n}\sum_i IF_iL_{1i}^{\prime}-\alpha_0^{-1}\frac{1}{2}\phi^{\prime\prime}(1)\hat{E}L_1^{\prime2}$. Plugging in the expression for $L_{1i}^{\prime}$,we can see that the contribution is given by $\frac{1}{n}\sum_i [IF_ih^{\prime}(0)rIF_i-\alpha_0^{-1}\phi^{\prime\prime}(1)h^{\prime}(0)^2\tilde{\alpha}rIF_i^2] = O_p(n^{-2})$ (note that $h^{\prime}(0)=\phi^{\prime\prime}(1)^{-1}$). Therefore, in the above expression, we can plug in the expressions for $L_{ki}^{\prime}$ and plug in $\tilde{\alpha}$ as $\tilde{\alpha}_0$.   This will give us the desired result (the detailed plug-in step is given in Appendix \ref{sec: proof_technical}). 

\subsubsection{Expansion of the optimal value of the minimization problem}\label{subsec: relation_min_max}

To get the formula for the minimum optimal value, it suffices to convert the minimization problem to maximizing the negation of the objective. To get a more general observation (that also works if we want an expansion with residual $O_p(n^{-k/2})$ where $k>4$), we can study the relation between the minimization and maximization problems. We observe that, for
the KKT condition of the minimization problem, we just need to replace
the constraint $\tilde{\alpha}>0$ with $\tilde{\alpha}<0$. When
we compute the asymptotic expansion for $\tilde{\alpha}$ in Section \ref{subsec: alpha_expansion}, we are indeed solving $\tilde{\alpha}$ based on equation of the following form (WLOG, suppose that $\phi^{\prime\prime}(1)=1$)
\[
\hat{\kappa}_2\tilde{\alpha}^{2}+a_{1}\tilde{\alpha}^{3}+a_{2}\tilde{\alpha}^{4}+\cdots=\frac{q}{n}
\]
The LHS can be regarded as the square of some function of $\tilde{\alpha}$
that has an expansion
\[
f(\tilde{\alpha})=\sqrt{\hat{\kappa}_2}\tilde{\alpha}+b_{1}\tilde{\alpha}^{2}+b_{2}\tilde{\alpha}^{3}+\cdots
\]
Therefore, the equation for $\tilde{\alpha}$ could be written as 
\[
\tilde{\alpha}+b_{1}\tilde{\alpha}^{2}+b_{2}\tilde{\alpha}^{3}+\cdots=\pm\sqrt{\frac{q}{\hat{\kappa}_2n}}.
\]
Depending on whether the constraint is $\tilde{\alpha}>0$ or $\tilde{\alpha}<0$,
we will take the plus or minus sign.  Moreover, notice that for any $t$, $\tilde{\alpha}+b_{1}\tilde{\alpha}^{2}+b_{2}\tilde{\alpha}^{3}+\cdots=tn^{-1/2}$ gives an inverse expansion $\tilde{\alpha} = tn^{-1/2} + c_1t^2n^{-1}+c_2t^3n^{-3/2}+\cdots$. Therefore, depending on whether the constraint is $\tilde{\alpha}>0$ or $\tilde{\alpha}<0$, we have 
\[\tilde{\alpha} = \sqrt{\frac{q}{\hat{\kappa}_2n}} + c_1\frac{q}{\hat{\kappa}_2n} + c_2\left({\frac{q}{\hat{\kappa}_2n}}\right)^{3/2} +\cdots  \] or \[\tilde{\alpha} = -\sqrt{\frac{q}{\hat{\kappa}_2n}} + c_1\frac{q}{\hat{\kappa}_2n} - c_2\left({\frac{q}{\hat{\kappa}_2n}}\right)^{3/2} +\cdots.  \] Notice that after plugging in $L_{ki}^{\prime}$ in Section \ref{subsec: expansion_optimal}, we get a polynomial of $\tilde{\alpha}$. Therefore, the minimum value has the property that compared to the maximum value, we need to replace the coefficient of $n^{-k/2}$ with its negation if $k$ is odd.
%\begin{align*}
% & \psi(\hat{P})-n^{-1/2}\sqrt{\frac{q\hat{\kappa}_{2}}{\phi^{\prime\prime}(1)}}-n^{-1}\frac{1}{6}\frac{q\phi^{\prime\prime\prime}(1)\hat{\gamma}}{\left(\phi^{\prime\prime}(1)\right)^{2}\hat{\kappa}_{2}}+n^{-1}\frac{1}{2}\frac{q}{\phi^{\prime\prime}(1)\hat{\kappa}_{2}}\hat{\mu}_{2,c}-\frac{n^{-3/2}q^{3/2}}{\phi^{\prime\prime}(1)^{3/2}\hat{\kappa}_{2}^{3/2}}\left[\frac{1}{6}\hat{\mu}_{3,c}-\frac{1}{2}\frac{\phi^{\prime\prime\prime}(1)}{\phi^{\prime\prime}(1)}\hat{\mu}_{2,b}\right.\\
% & \left.+\frac{\phi^{\prime\prime\prime}(1)}{3\kappa_{2}\phi^{\prime\prime}(1)}\hat{\gamma}\hat{\mu}_{2,c}+\left(\frac{\phi^{\prime\prime\prime}(1)^{2}}{8\phi^{\prime\prime}(1)^{2}}-\frac{1}{24}\frac{\phi^{(4)}(1)}{\phi^{\prime\prime}(1)}\right)\hat{\mu}_{4}+\frac{1}{2}\hat{\mu}_{2,a}-\frac{\phi^{\prime\prime\prime}(1)^{2}}{18\phi^{\prime\prime}(1)^{2}\kappa_{2}}\hat{\gamma}^{2}-\frac{\phi^{\prime\prime\prime}(1)^{2}}{8\phi^{\prime\prime}(1)^{2}}\kappa_{2}^{2}-\frac{1}{2\kappa_{2}}\hat{\mu}_{2,c}^{2}\right]+O_{p}(n^{-2}).
%\end{align*}

\section{Technical Developments for Bartlett Correction}\label{subsec: Bartlett_technical}
\subsection{Stochastic Expansion with Tail Probability Bound}
As discussed in Section 2.7 of \cite{Hall1992}, to rigorously argue that the residual in a stochastic expansion (which has the form $O_p(n^{-k/2})$) only contributes $O(n^{-k/2})$ to the residual of the resulting probability expansion, we need extra conditions to bound the tail probability.

We introduce a stronger version of Assumption \ref{assu: psi_expansion}, which requires a tail probability bound and a higher-order expansion. For any deterministic sequence $y_n$, we write $X_n = O_{p,\delta}(y_n)$
if $X_n$ satisfies $P(X_{n}>My_n n^{\delta})=O(n^{-3/2})$
for some constant $M$. 

\begin{assumption}\label{assu: stronger_assu}
There exist functions $IF_j,j=1,2,3,4$ and $R_n = O_{p,\delta} (n^{-3})$ such that 
\begin{enumerate}
\item
\[
\left|\psi(\hat{P}L)  -\psi(\hat{P})-\sum_{j=1}^5\frac{1}{j!} E_{\Delta}IF_{j}(K_{1},K_{2},\dots,K_{j};\hat{P})\right|\leq R_n
\]
holds for all $L$ such that
$\hat{E}[\phi(L)]\leq\frac{q}{2n}$. Here, $E_{\Delta}$ denotes the expectation under which $K_1,K_2,\dots\stackrel{\text{i.i.d.}}{\sim} \hat{P}L-\hat{P}$ (signed measure). \\
\item For any $1\leq m\leq6$, $\hat{E}IF_{1}^{m}(K_1;\hat{P})=O_{p,\delta}(1)$,
$\hat{E}IF_{2}^{m}(K_{1},K_{2};\hat{P})=O_{p,\delta}(1),\hat{E}IF_{3}^{m}(K_{1},K_{2},K_{3};\hat{P})=O_{p,\delta}(1)$ and $\hat{E}IF_4^m(K_1,K_2,K_3,K_4;\hat{P})=O_{p,\delta}(1)$. Moreover, $\frac{1}{\widehat{Var}IF_{1}^{2}(K_1;\hat{P})}=O_{p,\delta}(1)$. Here, under $\hat{E}$ and $\widehat{Var}$ we have $K_1,K_2,\dots\stackrel{\text{i.i.d.}}{\sim} \hat{P}$. 
\item For $i=1,2,3,4$, $IF_{i}(K_{1},\dots,K_{i};\hat{P})$ is permutation invariant in $K_{1},\dots,K_{i}$, and has zero marginal expectations under $\hat{P}$:
\[
E_{K_{j}\sim \hat{P}}IF_{i}(K_{1},\dots,K_{i};\hat{P})=0,\ j=1,2,\dots,i.
\]
Here, $E_{K_j\sim\hat{P}}$ stands for the expectation under $K_j\sim\hat{P}$ with $K_s,s\neq j$ fixed.
\end{enumerate}
\end{assumption}

To verify Assumption \ref{assu: stronger_assu}, we may use Markov inequalities (Note that $EX^m<\infty$ implies $P(X>n^{\delta})\leq n^{-\delta m}EX^m = O(n^{-\delta m})$). For example, for the smooth function model, Assumption \ref{assu: stronger_assu} holds if the objective function is smooth and the underlying random variable has finite moments up to a sufficiently high order. For $V$-statistics that satisfy the condition in Proposition \ref{prop: V_stat}, since the finiteness of exponential order implies the finiteness of all moments, with the same proof as Proposition \ref{prop: V_stat} we can show that Assumption \ref{assu: stronger_assu} holds for any $\delta>0$.

For functions, correspondingly, we write $\bar{O}_{p,\delta}^{(1/m)}(1)$
as a function $\Lambda(X)$ such that $P(\hat{E}\Lambda(X)^{m}>\delta)=O(n^{-2})$. We observe that $O_{p,\delta}(1)\cdot O_{p,\delta}(1)=O_{p,2\delta}(1)$,
$O_{p,\delta}(1)+O_{p,\delta}(1)=O_{p,\delta}(1)$. Similar observation holds for $\bar{O}_{p,\delta}^{(1/m)}(1)$ terms. Moreover, note that in our computation in Section \ref{sec: DRO_technical}, we only need to perform multiplication for a finite number of times.
Hence, with the same argument as in Theorem \ref{thm: DRO_expansion}, we will get the following stronger result (see Section \ref{subsec: relation_min_max} for the relation between $\psi_{\min}$ and $\psi_{\max}$):
\[
\psi_{max}=\psi(\hat{P})+\sum_{i=1}^{4}n^{-k/2}\left(\frac{q}{\phi^{\prime\prime}(1)}\right)^{k/2}\hat{C}_{k}+O_{p,D\delta}(n^{-5/2}),
\]
\[
\psi_{min}=\psi(\hat{P})+\sum_{i=1}^{4}n^{-k/2}\left(-\frac{q}{\phi^{\prime\prime}(1)}\right)^{k/2}\hat{C}_{k}+O_{p,D\delta}(n^{-5/2}).
\]
Here, $D$ is a finite constant and the coefficients are of order $\hat{C}_{k}=O_{p,D\delta}(1),\hat{C}_{1}^{-1}=O_{p,D\delta}(1)$. The expression for $\hat{C_k},k=1,2,3$ are given as in Theorem \ref{thm: DRO_expansion}.  We will see that it is not necessary to derive the explicit formulas of $\hat{C_4}$.
%We want to investigate how to choose $q$ so that we have
%\[
%P_{cover}:=P(\psi_{min}\leq\psi(P_{0})\leq\psi_{max})=\alpha+O(n^{-2})
%\]
%Plugging in the expression for $\psi_{min}$ and $\psi_{max}$, the
%probability above is equal to the probability of 
%\[
%\psi_{min}-\psi(\hat{P})\leq\psi(P_{0})-\psi(\hat{P})\leq\psi_{max}-\psi(\hat{P})
%\]
Let  
\[
g(x)=\hat{C}_{1}x+\hat{C}_{2}x^{2}+\hat{C}_{3}x^{3}+\hat{C}_{4}x^{4},
\]
then $\psi_{max}=\psi(\hat{P})+g(\sqrt{\frac{q}{n\phi^{\prime\prime}(1)}})+O_{p,\delta}(n^{-3})$
and $\psi_{min}=\psi(\hat{P})+g(-\sqrt{\frac{q}{n\phi^{\prime\prime}(1)}})+O_{p,\delta}(n^{-3})$.
For $x$ close to 0, $g(x)=y$ implies an inverse expansion
of $x$ in terms of $y$ given by

\[
f(y):=\frac{y}{\hat{C}_{1}}+\frac{\hat{C}_{2}}{\hat{C}_{1}^{3}}y^{2}+\frac{1}{\hat{C}_{1}^{3}}\left(2\left(\frac{\hat{C}_{2}}{\hat{C}_{1}}\right)^{2}-\frac{\hat{C}_{3}}{\hat{C}_{1}}\right)y^{3}+K_{4}y^{4}
\]
(here $K_{4}$ is a function of $\hat{C}_{k},k=1,\cdots,4$
whose explicit formula is not given; each of the coefficients in the above polynomial is of order
$O_{p,D_1\delta}(1)$ for some $D_1<\infty$). 
%and this function has the property that
%\[
%f(g(\sqrt{\frac{q}{n\phi^{\prime\prime}(1)}}))=\sqrt{\frac{q}{n\phi^{\prime\prime}(1)}}+O_{p,D_1\delta}(n^{-3}).
%\]
We apply $f$ to each term in the expression $\psi_{\min}-\psi(\hat{P})\leq\psi(P_0)-\psi(\hat{P})\leq \psi_{\max}-\psi(\hat{P})$ and multiply $\sqrt{n}$. Then the coverage probability could be written as
\[
P\left(W_n\in[-\sqrt{\frac{q}{\phi^{\prime\prime}(1)}}+O_{p,D_1\delta}(n^{-2}),\sqrt{\frac{q}{\phi^{\prime\prime}(1)}}+O_{p,D_1\delta}(n^{-2})]\right)
\]
where \[
W_{n}:=\sqrt{n}\left(\begin{array}{c}
\frac{\psi(P_{0})-\psi(\hat{P})}{\hat{C}_{1}}+\frac{\hat{C}_{2}}{\hat{C}_{1}}\left(\frac{\psi(P_{0})-\psi(\hat{P})}{\hat{C}_{1}}\right)^{2}\\
+\left(2\left(\frac{\hat{C}_{2}}{\hat{C}_{1}}\right)^{2}-\frac{\hat{C}_{3}}{\hat{C}_{1}}\right)\left(\frac{\psi(P_{0})-\psi(\hat{P})}{\hat{C}_{1}}\right)^{3}\\
+K_{4}\left(\psi(P_{0})-\psi(\hat{P})\right)^{4}
\end{array}\right).
\]
Suppose that $D_1\delta<1/2$. Then by the definition of $O_{p,\delta}$ we have that $P(O_{p,D_1\delta}(n^{-2})>n^{-3/2})=O(n^{-3/2})$. Therefore, the above coverage probability is equal to
\begin{equation}\label{eq: coverage_W_n} 
P\left(W_n\in[-\sqrt{\frac{q}{\phi^{\prime\prime}(1)}}\pm n^{-3/2},\sqrt{\frac{q}{\phi^{\prime\prime}(1)}}\mp n^{-3/2}]\right) + O(n^{-3/2})
\end{equation}
\subsection{Edgeworth Expansion}

Our next task is to investigate the Edgeworth expansion for $W_n$. 
If $\psi$ is a function of vector mean, then $W_{n}$ is also a function
of a vector mean, so the result in \citet{bhattacharya1978} could
be applied. In general, however, $W_n$ could be a von Mises functional. As discussed in the literature review, there are some formal computations and lower-order results on the validity of Edgeworth expansion for von Mises functionals. However, to the best of our knowledge, a general high-order result is not available in the literature. The method in \citet{TAKAHASHI198856}, as the author discussed, could be generalized to higher order expansions. This is potentially promising, but
would require more formulation and detailed proofs, which is out of
the scope of this paper. Thus, here we assume in general the following, where the formulas for $p_1$ and $p_2$ are taken from (2.24) and (2.25) of \cite{Hall1992}, and the form of the expansion for $\kappa_{i,n}$ is due to Theorem 2.1 of \cite{Bhattacharya1983}:

\begin{assumption}\label{assu: Edgeworth}
\[
P(W_n\leq x) = \Phi(x) + n^{-1/2}p_1(x)\phi(x) + n^{-1}p_2(x)\phi(x)+O(n^{-3/2})
\]
where 
\[
p_1(x) = -\{k_{1,2}+\frac{1}{6}k_{3,1}(x^2-1)\}
\]
and
\[
p_2(x) = -x\left\{\frac{1}{2}(k_{2,2}+k_{1,2}^2)+\frac{1}{24}(k_{4,1}+4k_{1,2}k_{3,1})(x^2-3)+\frac{1}{72}k_{3,1}^2(x^4-10x^2+15)\right\}.
\]
Here, $k_{i,j},1\leq i\leq 4,1\leq j\leq 2$ are the coefficients in the cumulant expansion of $W_n$: the $i$-th order cumulant of $W_n$ has expansion $\kappa_{i,n}=n^{-(i-2)/2}(k_{i,1}+n^{-1}k_{i,2}+O(n^{-3/2})) $. 
\end{assumption}

With very extensive algebra (shown in Appendix \ref{sec:Computation_edgeworth}),
we can get the asymptotic expansion for the cumulants of $W_{n}$
with an error of $O(n^{-3/2})$. We also observe that  $\sqrt{n}K_{4}\left(\psi(P_{0})-\psi(\hat{P})\right)^{4}$
will only contribute $O(n^{-3/2})$ to the cumulant. Hence, it is not necessary to have the explicit expression of $K_4$. Plugging in the cumulants to the formula given in Assumption \ref{assu: Edgeworth}, we can get the expansion for the coverage probability. This gives

\begin{thm} \label{thm: Bartlett_full} 
Suppose that $D_1\delta<1/2$ and Assumptions \ref{assu: phi_smoothness}, \ref{assu: stronger_assu} and \ref{assu: Edgeworth} hold. Moreover, suppose that $\psi(\cdot)$ and its influence functions can be expanded in terms of influence functions around $P_0$:
\[
\psi(\hat{P})=\psi(P_{0})+\hat{E}IF_1(X,P_{0})+\frac{1}{2}\hat{E}IF_2(X,Y;P_{0})+\frac{1}{6}\hat{E}IF_3(X,Y,Z;P_{0})+n^{-2}R_0
\]
\begin{align*}
IF_1(X;\hat{P}) & =IF_1(X;P_0)+\hat{E}_{Y}IF_2(X,Y;\hat{P})+\frac{1}{2}\hat{E}_{Y,Z}IF_3(X,Y,Z;\hat{P})\\ & -\hat{E}IF_1(X;P_0)-\hat{E}IF_2(X,Y;P_0)-\frac{1}{2}\hat{E}IF_3(X,Y,Z;P_0)+n^{-3/2}R_1(X)
\end{align*}
\begin{align*} 
IF_2(X,Y;\hat{P})&=IF_2(X,Y;P_0)+\hat{E}_{Z}IF_3(X,Y,Z;P_0)\\& -\hat{E}_{X}IF_2(X,Y;P_0)-\hat{E}_{Y}IF_2(X,Y;P_0)+n^{-1}R_2(X,Y)
\end{align*}
\[
IF_3(X,Y,Z;\hat{P}) = IF_3(X,Y,Z;P_0) + n^{-1/2}R_3(X,Y,Z)
\]
where $R_i, IF_i, i=0,1,2,3$ satisfy $ER_i(X_1,\dots,X_i))^{12} = O(1)$  and $E(IF_i(X_1,\dots,X_i;P_0))^{12} = O(1)$. Here, $\hat{E}$ stands for the expectation under distribution $X,Y,Z\stackrel{\text{i.i.d.}}{\sim}\hat{P}$, and $\hat{E}_{Y,Z}$ stands for the expectation under $Y,Z\stackrel{\text{i.i.d.}}{\sim}\hat{P}$ with $X$ fixed.  $\hat{E}_{X},\hat{E}_{Y}$, and $\hat{E}_Z$ are defined similarly. $E$ stands for the expectation under distribution $X,Y,Z\stackrel{\text{i.i.d.}}{\sim}P_0$. Moreover, suppose that for $i=1,2,3$, $IF_{i}(K_{1},\dots,K_{i};P_0)$ is permutation invariant in $K_{1},\dots,K_{i}$, and has zero marginal expectations under $P_0$, i.e., $E_{K_{j}\sim P_0}IF_{i}(K_{1},\dots,K_{i};P_0)=0,\ j=1,2,\dots,i.$
Then the claim in Theorem \ref{thm: Bartlett} holds.
\end{thm}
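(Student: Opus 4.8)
The plan is to establish the coverage expansion \eqref{eq: coverage_expansion} by reducing it to a one-dimensional Edgeworth expansion for the statistic $W_n$ and then reading off $A$ from the second-order Edgeworth polynomial. The reduction is already set up in Section \ref{subsec: Bartlett_technical}: under the strengthened Assumption \ref{assu: stronger_assu}, re-running the fixed-point argument of Theorem \ref{thm: DRO_expansion} (every manipulation in Section \ref{sec: DRO_technical} involves only finitely many multiplications, so the $O_{p,\delta}$ bookkeeping propagates through unchanged) yields the four-term expansions of $\psi_{\max},\psi_{\min}$ with $O_{p,D\delta}(n^{-5/2})$ residuals. Inverting the polynomial $g$ and applying $f$ to the event $\psi_{\min}\le\psi(P_0)\le\psi_{\max}$, I arrive at \eqref{eq: coverage_W_n}, namely that the coverage equals $P(W_n\in[-\sqrt{q/\phi''(1)},\sqrt{q/\phi''(1)}])+O(n^{-3/2})$, where the $\pm n^{-3/2}$ slack in the endpoints is absorbed into the error using $P(O_{p,\delta}(n^{-2})>n^{-3/2})=O(n^{-3/2})$; this is exactly the role of the hypothesis $D_1\delta<1/2$ and of the tail-probability device of Section 2.7 of \cite{Hall1992}.

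Write $a:=\sqrt{q/\phi''(1)}$. Applying the Edgeworth expansion of Assumption \ref{assu: Edgeworth}, the coverage becomes $[\Phi(a)-\Phi(-a)]+n^{-1/2}[p_1(a)-p_1(-a)]\phi(a)+n^{-1}[p_2(a)-p_2(-a)]\phi(a)+O(n^{-3/2})$. The leading term $\Phi(a)-\Phi(-a)=P(\chi^2_1\le a^2)$ matches \eqref{eq: coverage_expansion}. From the explicit formulas in Assumption \ref{assu: Edgeworth}, $p_1$ is an even polynomial, so the $n^{-1/2}$ contribution cancels by symmetry of the interval; $p_2$ is an odd polynomial, so the $n^{-1}$ contribution equals $2p_2(a)\phi(a)$. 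Matching with \eqref{eq: coverage_expansion} therefore forces the identification $A(x)=2p_2(x)$, and it remains only to compute the cumulants $k_{1,2},k_{2,2},k_{3,1},k_{4,1}$ of $W_n$ that feed into Hall's $p_2$.

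The heart of the proof is this cumulant computation, carried out in Appendix \ref{sec:Computation_edgeworth}. First I substitute the expansion $\psi(\hat{P})-\psi(P_0)=\hat{E}IF_1+\tfrac12\hat{E}IF_2+\tfrac16\hat{E}IF_3+O(n^{-2})$ at $P_0$ (the first display of Theorem \ref{thm: Bartlett_full}), together with the expansions converting the $\hat{P}$-influence functions, and hence the coefficients $\hat{C}_k$ built from $\hat\kappa_2,\hat\gamma,\hat\mu_{2,a},\ldots$, into their $P_0$ counterparts via the remaining displays. This rewrites $W_n$ as a fixed polynomial in degenerate-kernel $V$-statistics of the $P_0$-influence functions, plus residuals controlled by the twelfth-moment hypotheses on $R_i$ and $IF_i$. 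I then compute $E[W_n^m]$ for $m\le4$ to the required orders and convert to cumulants; the zero-marginal (canonical) property annihilates the bulk of the cross terms, and the $\hat{P}\to P_0$ conversion is precisely what produces the extra quantities $\mu_{2,d},\mu_{2,2},\mu_{1,2,d}$ appearing in $A(x)$ but absent from Theorem \ref{thm: DRO_expansion}. A useful internal check is that all contributions of $IF_3$ cancel in the final cumulants — the higher-order self-normalizing phenomenon of Section \ref{sec: self normalizing} — so that $\hat\mu_{3,c}$ drops out and $A$ depends only on $IF_1,IF_2$. Plugging the resulting $k_{i,j}$ into $p_2$ and simplifying gives $A(x)=2p_2(x)$ in the stated form.

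The main obstacle is exactly this cumulant algebra. The difficulty is threefold: tracking which products of degenerate kernels survive integration (a combinatorial bookkeeping governed entirely by the zero-marginal conditions), carrying every $\hat{P}$-to-$P_0$ moment conversion to the precise order at which it contributes (these terms are the easiest to drop or mis-sign, and are the very source of the discrepancy with \cite{diciccio1988} discussed in Section \ref{sec:corrected}), and verifying that each residual remains $O(n^{-3/2})$ after multiplication by $\sqrt{n}$ — for which the $O_{p,\delta}$ tail calculus of Assumption \ref{assu: stronger_assu} is indispensable, since an ordinary $O_p$ bound would not justify that a stochastic remainder of nominal order $n^{-3/2}$ contributes only $O(n^{-3/2})$ to the coverage probability.
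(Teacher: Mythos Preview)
Your proposal is correct and follows essentially the same route as the paper: the reduction to \eqref{eq: coverage_W_n} via the $O_{p,D\delta}$ bookkeeping, the parity argument on $p_1,p_2$ yielding $A(x)=2p_2(x)$, and the cumulant computation of Appendix \ref{sec:Computation_edgeworth} are exactly how the paper proceeds. One small clarification: the paper organizes the moment calculation by first expanding $W_n=A_n+n^{-1/2}(\hat C_2/\hat C_1)A_n^2+\cdots$ and then computing moments of $A_n$ and cross-moments $E[A_n^i\,\hat C_2/\hat C_1]$ separately (rather than $E[W_n^m]$ directly), and the $IF_3$ terms such as $\mu_{3,c},\mu_{(1,3),d}$ do appear in individual moments but cancel only in the combination $2p_2(x)$; otherwise your outline matches the paper's argument and its Mathematica-assisted algebra.
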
 

In Theorem \ref{thm: Bartlett_full}, we imposed a condition on the expansion of influence functions in terms of higher-order influence functions and the moments of the residuals. This assumption helps us make arguments like $EO_p(n^{-3/2})=O(n^{-3/2})$. For smooth function models, this follows from the relation among lower-order and higher-order derivatives (e.g., the second displayed condition corresponds to expanding the first-order derivative in terms of the second-order and third-order derivatives). For $V$-statistics, this follows from the relations between $E[h(X_1,\dots,X_T)|X_1,\dots,X_i]$ and $E[h(X_1,\dots,X_T)|X_1,\dots,X_{i+j}]$ for $1\leq i<i+j\leq 3$ (note that the latter can be seen as an influence function of the former). To rigorously verify the condition in Theorem \ref{thm: Bartlett_full}, we just need to check the moment of each term in the expansion. The detailed computation with Mathematica code that leads to the result in Theorem \ref{thm: Bartlett_full} is provided in Appendix \ref{sec:Computation_edgeworth}.

\section*{Acknowledgements}
We gratefully acknowledge support from the National Science Foundation under grants CAREER CMMI-1834710 and IIS-1849280.

\bibliographystyle{plainnat}
\bibliography{arxivbib}

\appendix

\section{Verification of Assumptions in Section \ref{subsec: Verification-of-Assumption}}\label{sec:verification proofs}
% \subsection{Proof of Proposition \ref{prop: function_of_mean}}
\begin{proof}[Proof of Proposition \ref{prop: function_of_mean}]
By Taylor theorem, the residual in Assumption \ref{assu: psi_expansion} can be written as 
\begin{equation}\label{eq: residual_function_mean}
\sum_{i_1,i_2,i_3,i_4}f_{i_1,i_2,i_3,i_4}(t\hat{E}Z+(1-t)\hat{E}LZ)(\hat{E}LZ-\hat{E}Z)_{i_1}\dots (\hat{E}LZ-\hat{E}Z)_{i_4}
\end{equation}
for some $t\in[0,1]$. 
By the law of large numbers, we know that $\hat{E}Z\stackrel{p}{\rightarrow} E_{P_0}Z$. We also have that for each $i=1,2,\dots,q$, $\left|\hat{E}LZ-\hat{E}Z\right|_i \leq \frac{1}{n}\sum_{j=1}^n \left|l_j-1\right|Z^{(j)}_i\leq \frac{1}{\sqrt{n}}\left\Vert L-1\right\Vert_2 \hat{E}Z^2_i\leq C_{\phi}\frac{1}{\sqrt{n}}\hat{E}Z^2_i=O_p(n^{-1/2})$. (Also notice that the bound $C_{\phi}\frac{1}{n}\hat{E}Z_i^2$ does not depend on $L$) From this we have $\hat{E}LZ\stackrel{p}{\rightarrow}E_{P_0}Z$ so by continuous mapping, $f_{i_1,i_2,i_3,i_4}(t\hat{E}Z+(1-t)\hat{E}LZ)\stackrel{p}{\rightarrow}f_{i_1,i_2,i_3,i_4}(E_0 Z)$. Thus $f_{i_1,i_2,i_3,i_4}(t\hat{E}Z+(1-t)\hat{E}LZ)=O_p(1)$. Combing this with the result that $\left|\hat{E}LZ-\hat{E}Z\right|_i=O_p (n^{-1/2})$, we have that the residual in \eqref{eq: residual_function_mean} is $O_p(n^{-2})$. Part 2 of Assumption \ref{assu: psi_expansion} could be verified by checking formula \eqref{eq: IF_function_mean} and the condition that $Z$ has finite moments up to the 15-th order. (The condition $\frac{1}{\widehat{Var}IF_{1}(X_1;\hat{P})}=O_{p}(1)$ follows from $\text{Var}_0 (\nabla f(E_0Z)^{\top}\cdot Z)>0$ as assumed in this proposition)
\end{proof}

% \subsection{Proof of Proposition \ref{prop: V_stat}}

 \begin{proof}[Proof of Proposition \ref{prop: V_stat}]
 By the assumption on $h$, we have that
\[
|\hat{E}[h(X_{1:T})|X_{a_{1}}=X^{(1)},\dots,X_{a_{k}}=X^{(k)}]|\leq\Lambda_{a_{1}}(X^{(1)})+\cdots+\Lambda_{a_{k}}(X^{(k)})+\sum_{j=1}^{T}\text{\ensuremath{\hat{E}\Lambda_{j}}}
\]
By this relation and the fact that the summation of finitely many
r.v. with finite exponential moments still has  finite exponential
moment, we can see that $IF_{k}(X^{(1)},X^{(2)},\dots,X^{(k)};\hat{P})$
has finite exponential moment for any $1\leq k\le T$, so part 2 of Assumption \ref{assu: psi_expansion} is verified (as in the previous example, the condition $\frac{1}{\widehat{Var}IF_{1}(X_1;\hat{P})}=O_{p}(1)$ follows from $\text{Var}_0g(X)>0$ as assumed in this proposition). 

Now we verify Assumption \ref{assu: psi_expansion} (1). We may write $\psi(\hat{P}L)$ as
\begin{align*}
\psi(\hat{P}L) & =E_{(X_1,X_2,\dots,X_T)\sim (\hat{P}L)^T} h(X_{1},\dots,X_{T})\\
 & =E_{(X_1,X_2,\dots,X_T)\sim [\hat{P}+(\hat{P}L-\hat{P})]^T} h(X_{1},\dots,X_{T})\\
 & =\sum_{k=1}^{T}\frac{1}{k!}\sum_{i_{1},i_{2},\dots,i_{k}\text{mutually different}}E_{X_{i_{1}},\dots,X_{i_{k}}\stackrel{\text{i.i.d.}}{\sim}\hat{P}L-\hat{P}}E_{X_{j}\stackrel{\text{i.i.d.}}{\sim}\hat{P}\text{\ for \ensuremath{j\notin\{i_{1},\dots,i_{k}\}}}}h(X_{1},\dots,X_{T})\\
 & = \sum_{k=1}^{T} \frac{1}{k!} E_{\Delta} IF_k(X_1,X_2,\dots,X_k;\hat{P})
\end{align*} 
Comparing this with the statement made in part 1 of Assumption \ref{assu: psi_expansion}, it suffices to show that 
\[
 \sum_{k=4}^{T} \frac{1}{k!} E_{\Delta} IF_k(X_1,X_2,\dots,X_k;\hat{P}) = O_p(n^{-2})
\]
This is true since by arguments similar to Lemmas \ref{lem: integrate_moment}-\ref{lem: Integrate_moment_2}, using Holder's inequality, the existence of the exponential moment of $IF_k$ together with the boundness of $\left\Vert L-1\right\Vert_2 $, we can show that $$E_{\Delta} IF_{k}(X_{1},X_{2},\dots,X_{k};\hat{P})=O_p(n^{-k/2})$$ for each $4\leq k\leq T$. 
%And to show that $\frac{1}{\hat{E}IF_1(X;\hat{P})^2}=O_p(1)$,
%it sufficies to add a condition that the %variance of $IF(X;P_{0})$
%is not zero.
 \end{proof}
% \subsection{Proof of Proposition \ref{prop: optimization}}
\begin{proof}[Proof of Proposition \ref{prop: optimization}]
\sloppy From the Glivenko-Cantelli assumption, we have that $\sup_{x\in\mathcal{X}}\left|\hat{E}\ell(x,\xi)-E_{P_0}\ell(x,\xi)\right|\stackrel{a.s.}{\rightarrow}0$. For $\hat{P}L$, by Cauchy-Schwartz we have $\sup_{ x\in\mathcal{X},\hat{E}\phi(L)\leq q/2n}\left| E_{\hat{P}L}\ell(x,\xi)-E_{\hat{P}}\ell(x,\xi)\right| \leq \frac{1}{\sqrt{n}}\sup_{\hat{E}\phi(L)\leq q/2n}\left\Vert L-1 \right\Vert_2 \hat{E}\sup_{x\in\mathcal{X}}\ell^2(x,\xi)\leq \frac{1}{\sqrt{n}} C_{\phi}\hat{E}H(\xi)\stackrel{a.s.}{\rightarrow}0$. Then by the assumption of unique well seperated root, we get the consistency $x_{\hat{P}}^*\stackrel{a.s.}{\rightarrow} x_{P_0}^*$ and $x_{\hat{P}L}^*\stackrel{a.s.}{\rightarrow} x_{P_0}^*$ uniformly for all feasible $L$. Since we only need all of our claims hold in an asymptotic sense, we may focus on the case when both $x_{\hat{P}}^*$ and $x_{\hat{P}L}^*$ are in the $\delta$-neighborhood of $x_{P_0}^*$ (the probability that this happens would converge to 1 as $n$ goes to infinity).

Since $\ell(x,\xi)$ is differentiable w.r.t. $x$ and $x_{P_0}^*$ is in the interior of $\mathcal{X}$, we have that $x_{\hat{P}L}$ satisfies the first-order condition (the interchangeability of differentiation and expectation follows from the fact that $\hat{P}$ is discrete-supported)
\[
E_{\hat{P}L}\left[\ell_{x}(x;\xi)\right]=0.
\]
By Taylor's theorem, we have that 
\begin{equation}\label{eq: eg3_Taylor1}
E_{\hat{P}L}\left[\ell_{x}(x_{\hat{P}}^{*};\xi)+\ell_{xx}(\zeta(\xi),\xi)(x_{\hat{P}L}^{*}-x_{\hat{P}}^{*})\right]=0
\end{equation}
for some $\zeta(\epsilon)$ on the line segment between $x_{\hat{P}^*}$ and $x_{\hat{P}L}^*$. This gives us 
\begin{equation}
x_{\hat{P}L}^{*}-x_{\hat{P}}^{*}=-\frac{\left(E_{\hat{P}L}-E_{\hat{P}}\right)\ell_{x}(x_{\hat{P}}^{*};\xi)}{E_{\hat{P}L}\ell_{xx}(\zeta(\xi),\xi)}.\label{eq: opt_argmin}
\end{equation}
Here we used the relation $E_{\hat{P}}\ell_{x}(x_{\hat{P}}^{*};\xi)=0 $ which also follows from the first-order condition. The denominator of \eqref{eq: opt_argmin} we can write it as
\[
E_{\hat{P}L}\ell_{xx}(\zeta(\xi),\xi)=E_{\hat{P}}\ell_{xx}(\zeta(\xi),\xi)+\left(E_{\hat{P}L}-E_{\hat{P}}\right)\ell_{xx}(\zeta(\xi),\xi)
\]
Notice that $\zeta(\xi)$ is between $x_{\hat{P}L}^{*}$ and $x_{\hat{P}}^{*}$,
we have that it will converge (uniformly in $\xi$) to $x_{P}^{*}$
when $n$ is large. Therefore, with a probability approaching 1 as $n\rightarrow\infty$,
we have that 
\[
E_{\hat{P}L}\ell_{xx}(\zeta(\xi),\xi)= E_{\hat{P}}\ell_{xx}(\zeta(\xi),\xi)-\left(E_{\hat{P}L}-E_{\hat{P}}\right)\ell_{xx}(\zeta(\xi),\xi)\geq E_{\hat{P}}h(\xi)-\left(E_{\hat{P}L}-E_{\hat{P}}\right)H^{1/2}(\xi)
\]
Since $Eh(\xi)>0$, we have that $E_{\hat{P}}h(\xi)=\Omega_{p}(1)$.
Since $EH(\xi)<\infty$, we have that 
\begin{equation}\label{eg: eq3_H_bound}
\left(E_{\hat{P}L}-E_{\hat{P}}\right)H^{1/2}(\xi)=\hat{E}(L-1)H^{1/2}\leq\sqrt{\hat{E}(L-1)^{2}\hat{E}H}=O_{p}(n^{-1/2}).
\end{equation}
From the preceding two inequalities, we get
\[
E_{\hat{P}L}\ell_{xx}(\zeta(\xi),\xi)\geq E_{\hat{P}}h(\xi)-\left(E_{\hat{P}L}-E_{\hat{P}}\right)H^{1/2}(\xi)=\Omega_{p}(1).
\]
For the numerator of \eqref{eq: opt_argmin}, we know that 
\[
\left(E_{\hat{P}L}-E_{\hat{P}}\right)\left|\ell_{x}(x_{\hat{P}}^{*};\xi)\right|\leq\left(E_{\hat{P}L}-E_{\hat{P}}\right)H^{1/2}=O_{p}(n^{-1/2}).
\]
Here the second equality follows from \eqref{eg: eq3_H_bound}. As a result, from \eqref{eq: opt_argmin} we have that $x_{\hat{P}L}^{*}-x_{\hat{P}}^{*}=O_{p}(n^{-1/2})$.

By Taylor's theorem, we write the denominator of \eqref{eq: opt_argmin} as 
\[
E_{\hat{P}L}\ell_{xx}(x_{\hat{P}}^{*},\xi)+E_{\hat{P}L}\ell_{xxx}(\tilde{\zeta}(\xi),\xi)(\zeta(\xi)-x_{\hat{P}}^{*})
\]
From the relation $\left|\zeta(\xi)-x_{\hat{P}}^{*}\right|\leq x_{P_0}^{*}-x_{\hat{P}}^{*}=O_{p}(n^{-1/2})$,
we have that 
\[
E_{\hat{P}L}\ell_{xxx}(\tilde{\zeta}(\xi),\xi)(\zeta(\xi)-x_{\hat{P}}^{*})\leq\left(x_{P}^{*}-x_{\hat{P}}^{*}\right)E_{\hat{P}L}H^{1/2}=O_{p}(n^{-1/2}).
\]
Here the second equality follows from \eqref{eg: eq3_H_bound} and the relation $E_{\hat{P}}H^{1/2} = O_p(1)$ (by the weak law of large numbers). From the analysis above, replacing the numerator of \eqref{eq: opt_argmin} with its Taylor approximation, we get the following expansion 
\begin{equation}\label{eq: eg3_exp1}
x_{\hat{P}L}^{*}-x_{\hat{P}}^{*}=-\frac{\left(E_{\hat{P}L}-E_{\hat{P}}\right)\ell_{x}(x_{P}^{*};\xi)}{E_{\hat{P}}\ell_{xx}(x_{\hat{P}}^{*},\xi)}+O_{p}(n^{-1}).
\end{equation}
To get higher order expansions, we may further expand \eqref{eq: eg3_Taylor1} and get the equation
\[
E_{\hat{P}L}\left[\ell_{x}(x_{\hat{P}}^{*};\xi)+\ell_{xx}(x_{\hat{P}}^{*},\xi)(x_{\hat{P}L}^{*}-x_{\hat{P}}^{*})+\frac{1}{2}\ell_{xxx}(\zeta(\xi),\xi)(x_{\hat{P}L}^{*}-x_{\hat{P}}^{*})^{2}\right]=0.
\]
Then we would obtain that
\begin{align*}
x_{\hat{P}L}^{*}-x_{\hat{P}}^{*}&=-\frac{E_{\hat{P}L}\left[\ell_{x}(x_{\hat{P}}^{*};\xi)+\frac{1}{2}\ell_{xxx}(\zeta(\xi),\xi)(x_{\hat{P}L}^{*}-x_{\hat{P}}^{*})^{2}\right]}{E_{\hat{P}L}\ell_{xx}(x_{\hat{P}}^{*},\xi)}\\
&=-\frac{\left(E_{\hat{P}L}-E_{\hat{P}}\right)\ell_{x}(x_{\hat{P}}^{*};\xi)}{E_{\hat{P}L}\ell_{xx}(x_{\hat{P}}^{*},\xi)}-\frac{1}{2}\frac{E_{\hat{P}L}\ell_{xxx}(\zeta_{t},\xi)}{E_{\hat{P}L}\ell_{xx}(x_{P}^{*},\xi)}\left(\frac{\left(E_{\hat{P}L}-E_{\hat{P}}\right)\ell_{x}(x_{\hat{P}}^{*};\xi)}{E_{\hat{P}}\ell_{xx}(x_{\hat{P}}^{*},\xi)}+O_{p}(n^{-1})\right)^{2}\\
\end{align*}
Here the second equality follows by plugging in the expansion \eqref{eq: eg3_exp1} and noting that $E_{\hat{P}}\ell_{x}(x_{\hat{P}}^{*};\xi)=0 $. Based on the preceding displayed expression, with similar arguments following \eqref{eq: opt_argmin}, we have
\begin{align*}
x_{\hat{P}L}^{*}-x_{\hat{P}}^{*}=&-\frac{E_{\hat{P}L-\hat{P}}\ell_{x}(x_{\hat{P}}^{*};\xi)}{E_{\hat{P}}\ell_{xx}(x_{\hat{P}}^{*},\xi)}\left(1-\frac{E_{\hat{P}L-\hat{P}}\ell_{xx}(x_{\hat{P}}^{*};\xi)}{E_{\hat{P}}\ell_{xx}(x_{\hat{P}}^{*},\xi)}\right)\\ & -\frac{1}{2}\frac{E_{\hat{P}}\ell_{xxx}(x_{P}^{*},\xi)}{E_{\hat{P}}\ell_{xx}(x_{P}^{*},\xi)}\left(\frac{E_{\hat{P}L-\hat{P}}\ell_{x}(x_{\hat{P}}^{*};\xi)}{E_{\hat{P}}\ell_{xx}(x_{\hat{P}}^{*},\xi)}\right)^{2}+O_{p}(n^{-3/2}).
\end{align*}

%(here we use the notation $E_{\hat{P}(L-1)}:=E_{\hat{P}L}-E_{\hat{P}}$).

With a similar discussion, we could get an expansion for $x_{\hat{P}L}^{*}-x_{\hat{P}}^{*}$ with residual $O_p(n^{-2})$. Then we will be able to get the expansion 
\begin{align*}
 & \psi(\hat{P}L)-\psi(\hat{P})\\
= & E_{\hat{P}L}\ell(x_{\hat{P}L}^{*};\xi)-E_{\hat{P}}\ell(x_{\hat{P}}^{*};\xi)\\
= & E_{\hat{P}L}\left[\ell(x_{\hat{P}}^{*};\xi)+\ell_{x}(x_{\hat{P}}^{*};\xi)\cdot(x_{\hat{P}L}^{*}-x_{\hat{P}}^{*})+\frac{1}{2}\ell_{xx}(x_{\hat{P}}^{*};\xi)\cdot(x_{\hat{P}L}^{*}-x_{\hat{P}}^{*})^{2}+\frac{1}{6}\ell_{xxx}(x_{\hat{P}}^{*};\xi)\cdot(x_{\hat{P}L}^{*}-x_{\hat{P}}^{*})^{3}\right]\\
 & -E_{\hat{P}}\ell(x_{\hat{P}}^{*};\xi)+ \frac{1}{4!}E_{\hat{P}L}\left[\ell_{xxxx}(\zeta_2(\xi);\xi)\right](x_{\hat{P}L}^{*}-x_{\hat{P}}^{*})^{4} 
\end{align*}
From the definition of $H$ we know that $\left|\ell_{x,x,x,x}(\zeta_2(\xi);\xi)\right| \leq E_{\hat{P}L}H^{1/2}(\xi)=O_p(1)$. Hence $E_{\hat{P}L}\left[\ell_{xxxx}(\zeta_2(\xi);\xi)\right](x_{\hat{P}L}^{*}-x_{\hat{P}}^{*})^{4} =O_{p}(n^{-2})$. Therefore, we get an expansion with residual $O_p(n^{-2})$ after plugging in the expansion for $x_{\hat{P}L}^{*}-x_{\hat{P}}^{*} $. This verifies part 1 of Assumption \ref{assu: psi_expansion}. 

To verify part 2 of Assumption \ref{assu: psi_expansion}, we check the expression for the influence function. For example,
the first order influence function is given as 
\[
IF_{1}(\xi;\hat{P})=\ell(x_{\hat{P}}^{*},\xi)-E_{\hat{P}}\ell(x_{\hat{P}}^{*},\xi).
\]
The second-order influence function
is 
\[
IF_{2}(\xi_{1},\xi_{2};\hat{P})=-\frac{2\ell_{x}(x_{\hat{P}}^{*},\xi_{1})\ell_{x}(x_{\hat{P}}^{*},\xi_{2})}{E_{\hat{P}}\ell_{xx}(x_{\hat{P}}^{*},\xi)}.
\]
From these expressions and the consistency $x_{\hat{P}}^{*}\rightarrow x_{P_0}^{*}$,
we can see that the second part of Assumption \ref{assu: psi_expansion}
follows from the finiteness of the moments of $\ell$
and its derivatives $w.r.t.$ $x$ in the neighborhood of $x_{P_0}^{*}$, which is exactly what we assumed for this proposition. The rigorous argument is analogous to the arguments in the earlier part of the proof which lead to the conclusion  $x_{\hat{P}L}^{*}-x_{\hat{P}}^{*}=O_{p}(n^{-1/2})$. 
\end{proof}

\section{Proofs of Technical Results}\label{sec: proof_technical}
% \subsection{Proof of Lemma \ref{lem: KKT}}
\begin{proof}[Proof of Lemma \ref{lem: KKT}]
Notice that $L$ has discrete support, it is not hard to see that
the domain is compact and hence there exists a global optimizer. From
optimization theory, we know an optimal solution must be either a
solution to the KKT condition or a point that is not regular. 

For a point that satisfies the KKT condition, we have that there exists $\alpha,\beta$ such that
\begin{gather}\label{eq: proof_KKT}
\begin{array}{c}
\frac{\partial}{\partial L_{i}}\psi(\hat{P}L)-\frac{1}{n}\alpha\phi^{\prime}(L_{i})-\frac{1}{n}\beta=0,i=1,2,\dots,n\\
\hat{E}[\phi(L)]-\frac{q}{2n}\leq0,\alpha\geq0\\
\alpha\left(\hat{E}[\phi(L)]-\frac{q}{2n}\right)=0\\
\hat{E}L-1=0
\end{array}
\end{gather}
We argue that the probability that $\alpha=0$ goes to 0 as $n\rightarrow\infty$. Indeed,
if $\alpha=0$, then 
\begin{equation}\label{eq: KKT_proof}
\frac{\partial}{\partial L_{i}}\psi(\hat{P}L)=\frac{1}{n}\beta,i=1,2,\cdots,n
\end{equation}
From the expression for $\psi$ given in \eqref{eq: psi_truncation} and the definition of $E_{\Delta}$ given in Assumption \ref{assu: psi_expansion}, we have that (here $IF_{i}:=IF_{1}(X_{i};\hat{P}),IF_{ij}:=IF_{2}(X_{i},X_{j};\hat{P})$
and similar for $IF_{ijk}$, and we sum over repeated index, e.g., $IF_{ij}(L_j-1)=\sum_jIF_{ij}(L_j-1)$)
\[
\frac{\partial}{\partial L_{i}}\psi(\hat{P}L) = \frac{1}{n}IF_i + \frac{1}{n^2}IF_{ij}(L_j-1) + \frac{1}{2n^3}IF_{ijk}(L_j-1)(L_k-1).
\]
From the preceding two equations we have that
\[
IF_{i}-\beta=\frac{1}{n}IF_{ij}(L_{j}-1)+\frac{1}{2n^{2}}IF_{ijk}(L_{j}-1)(L_{k}-1)
\]
and taking the sum of squares, we get that
\begin{equation}
\frac{1}{n}\sum_{i}\left(IF_{i}-\frac{\sum_iIF_i}{n}\right)^{2}\leq\frac{1}{n}\sum_{i}\left(IF_{i}-\beta\right)^{2}\leq\sum_{i}\left(\frac{1}{n}IF_{ij}(L_{j}-1)+\frac{1}{2n^{2}}IF_{ijk}(L_{j}-1)(L_{k}-1)\right)^{2}.\label{eq: inequality_alpha_equal_0}
\end{equation}
However, the LHS is $\Omega_{p}(1)$ following our assumption that $\frac{1}{\hat{E}IF_1^2}=O_p(1) $
while the RHS is $O_{p}(n^{-1})$ (this can be shown by putting $K(X),K_{i}(X)$
in Lemmas \ref{lem: integrate_moment}, \ref{lem: Integrate_moment_2}
as $\sqrt{n}(L-1)$). Hence, the probability that LHS$\leq$RHS would
converge to 0 as $n\rightarrow\infty$, which means the probability that $\alpha=0$ goes to 0.

If a point is not regular, that means the derivatives of the two constraints of \eqref{eq: hatP_L_leading} are colinear, i.e., there exists $t$ such that $\phi^{\prime}(L_{i})=t,i=1,2,\dots,n$. Since $\phi^{\prime}$ is strictly increasing, we have that each element of vector $L$ is the same number. Since $\hat{E}L=1$, this means $L_{i}=1,i=1,2,\dots,n$. Therefore, for a nonregular point, the objective value is given by $\psi(\hat{P})$.  To show that $L=1$ can not
be the optimal solution, we do a local analysis. For any $i\neq j$ we can
perturb $L$ by adding and subtracting a same small amount for $L_{i}$
and $L_{j}$, and the objective would change by $\frac{\partial}{\partial L_{i}}\psi(\hat{P}L)-\frac{\partial}{\partial L_{j}}\psi(\hat{P}L)$.
If $L=1$ is optimal, then this amount must be 0 for all choice of
$i$ and $j$. Hence all of $\frac{\partial}{\partial L_{i}}\psi(\hat{P}L),i=1,2,\dots,n$
are the same number which will again lead to \eqref{eq: KKT_proof}.

Now we have shown that \eqref{eq: proof_KKT} holds with $\alpha>0$ (w.p. $1-o(1)$). Reformulating \eqref{eq: proof_KKT} by letting $\tilde{\alpha}=\alpha^{-1}$, we get the \eqref{eq: KKT_alpha-1}.
\end{proof}
We remark that it is also possible to enhance the probability in this
lemma to $1-O(n^{-2})$ by bounding the probability of \eqref{eq: inequality_alpha_equal_0},
e.g., by Markov's inequality and moment conditions.

% \subsection{Proof of Lemma \ref{lem: integrate_moment} and Lemma \ref{lem: Integrate_moment_2}}

\begin{proof}[Proof of Lemma \ref{lem: integrate_moment} and Lemma \ref{lem: Integrate_moment_2}]
We focus on Lemma \ref{lem: integrate_moment}. This follows from an application of H\"older's inequality. Indeed, 
\begin{align*}
\hat{E}G(X)^{5}= & \frac{1}{n}\sum_{i=1}^{n}G(X_{i})^{5}\\
= & \frac{1}{n}\sum_{i=1}^{n}\left(\frac{1}{n}\sum_{j=1}^{n}IF_{2}(X_{i},X_{j})K(X_{j})\right)^{5}\\
\text{(H\"older)}\leq & \frac{1}{n}\sum_{i=1}^{n}\left(\left(\frac{1}{n}\sum_{j=1}^{n}IF_{2}^{5}(X_{i},X_{j})\right)^{1/5}\left(\frac{1}{n}\sum_{j=1}^{n}K^{5/4}(X_{j})\right)^{4/5}\right)^{5}\\
= & \left(\frac{1}{n}\sum_{i=1}^{n}\left(\frac{1}{n}\sum_{j=1}^{n}IF_{2}^{5}(X_{i},X_{j})\right)^{1/5}\right)^{5}\left(\frac{1}{n}\sum_{j=1}^{n}K^{5/4}(X_{j})\right)^{4}\\
(\text{by convexity of \ensuremath{x^{5}}})\leq & \left(\frac{1}{n^{2}}\sum_{i,j=1}^{n}IF_{2}^{5}(X_{i},X_{j})\right)\left(\frac{1}{n}\sum_{j=1}^{n}K^{5/4}(X_{j})\right)^{4}\\
= & \hat{E}IF_{2}(X,Y)^{5}\left(\hat{E}K^{5/4}(X)\right)^{4}\\
\text{(by Assumption \ref{assu: psi_expansion} (2))}= & O_{p}(1)
\end{align*}

With a similar proof, we can show Lemma \ref{lem: Integrate_moment_2}.
\end{proof}

% \subsection{Proof of Lemma \ref{lem: order_alpha_beta}}

\begin{proof}[Proof of Lemma \ref{lem: order_alpha_beta}]
By the first equation in \eqref{eq: KKT_alpha-1}, we have
\begin{equation}\label{eq: alpha_order} 
\tilde{\alpha}\frac{\partial}{\partial L_{i}}n\psi(\hat{P}L)-\tilde{\alpha}\beta=\phi^{\prime}(L_{i})
\end{equation}
Since $\hat{E}\phi(L)\leq \frac{q}{2n}$, we have that $L_{i}\in[l_{\phi},u_{\phi}]$ where $l_{\phi},u_{\phi}$
are the two positive values such that $\phi(l_{\phi})=\phi(u_{\phi})=\frac{q}{2}$
(or $l_{\phi}=0$ if $\phi(0)$, $u_{\phi}<\infty$ since $\phi(1)=0$ and
the convexity of $\phi$ implies the coerciveness of $\phi(L_{i})$
as $L_{i}\rightarrow\infty$). By Taylor expansion on $\phi$ we have
\[
\phi^{\prime}(L_{i})=\phi^{\prime\prime}(\xi_{i})(L_{i}-1)
\]
where $\xi_{i}$ is between $1$ and $L_{i}$. Since $L_{i}\in[l_{\phi},u_{\phi}]$,
we have that $\phi^{\prime\prime}(\xi_{i})\in[c_{1},C_{1}]$
where $c_{1}>0$ and $C_{1}$ are constants determined by $\phi$
(and $q$). Hence taking squares in the preceding equality we get
\[
\phi^{\prime}(L_{i})^{2}\in[c_{2}(L_{i}-1)^{2},C_{2}(L_{i}-1)^{2}]
\]
Summing over $i$ and using Lemma \ref{lem: L_bound}, we will get that
\[
c_{3}\leq\sum_{i=1}^{n}\phi^{\prime}(L_{i})^{2}\leq C_{3}.
\]
Therefore from \eqref{eq: alpha_order} we get
\begin{equation}
c_{3}\leq\tilde{\alpha}^{2}\sum_{i=1}^{n}\left(\frac{\partial}{\partial L_{i}}n\psi(\hat{P}L)-\beta\right)^{2}\leq C_{3}.\label{eq: L2_D_phi}
\end{equation}
From \eqref{eq: fixed_point}, by Taylor expansion of $h$ around 0, we have that 
\[
L_{i}=1+h^{\prime}(0)(\tilde{\alpha}\frac{\partial}{\partial L_{i}}n\psi(\hat{P}L)-\tilde{\alpha}\beta)+\frac{1}{2}h^{\prime\prime}(\xi_{i})(\tilde{\alpha}\frac{\partial}{\partial L_{i}}n\psi(\hat{P}L)-\tilde{\alpha}\beta)^{2}
\]
Since $\tilde{\alpha}\frac{\partial}{\partial L_{i}}n\psi(\hat{P}L)-\tilde{\alpha}\beta$
has bounded range (implied by \eqref{eq: fixed_point} and the boundness of $L_i$), from Assumption \ref{assu: phi_smoothness} we have that there exists $M$ such that $|h^{\prime\prime}(\xi_{i})|\leq M$. Therefore, summing w.r.t. $i$ in the above display and using relation \eqref{eq: L2_D_phi},
we will get that
\[
\sum_{i=1}^{n}L_{i}=n+h^{\prime}(0)\sum_{i=1}^{n}(\tilde{\alpha}\frac{\partial}{\partial L_{i}}n\psi(\hat{P}L)-\tilde{\alpha}\beta)+R
\]
where $|R|<M_1$ for some deterministic $M_1$. Since $\hat{E}L=1$, we
have the LHS of the equation above is equal to $n$ and hence from
the above equation we conclude that
\[
\sum_{i=1}^{n}(\tilde{\alpha}\frac{\partial}{\partial L_{i}}n\psi(\hat{P}L)-\tilde{\alpha}\beta)=R/h^{\prime}(0)=R\phi^{\prime\prime}(1)
\]
Thus $\tilde{\alpha}\beta$ can be expressed as
\begin{equation}
\tilde{\alpha}\beta=\frac{1}{n}\sum_{i=1}^{n}\tilde{\alpha}\frac{\partial}{\partial L_{i}}n\psi(\hat{P}L)-\frac{1}{n}R\phi^{\prime\prime}(1)=\frac{1}{n}\sum_{i=1}^{n}\tilde{\alpha}\frac{\partial}{\partial L_{i}}n\psi(\hat{P}L)-O_{p}(n^{-1}).\label{eq: ab}
\end{equation}
Plugging this into \eqref{eq: L2_D_phi}, we obtain that
\[
c_{3}\leq\sum_{i=1}^{n}\left(\tilde{\alpha}\left(\frac{\partial}{\partial L_{i}}n\psi(\hat{P}L)-\frac{1}{n}\sum_{i=1}^{n}\frac{\partial}{\partial L_{i}}n\psi(\hat{P}L)\right)-O_{p}(n^{-1})\right)^{2}\leq C_{3}.
\]
Since the cross term is zero (because $\sum_{i=1}^{n}\left[\frac{\partial}{\partial L_{i}}n\psi(\hat{P}L)-\frac{1}{n}\sum_{i=1}^{n}\frac{\partial}{\partial L_{i}}n\psi(\hat{P}L)\right]=0$),
we have that 
\begin{equation}
c_{3}+O_{p}(n^{-2})\leq\tilde{\alpha}^{2}\sum_{i=1}^{n}\left(\frac{\partial}{\partial L_{i}}n\psi(\hat{P}L)-\frac{1}{n}\sum_{i=1}^{n}\frac{\partial}{\partial L_{i}}n\psi(\hat{P}L)\right)^{2}\leq C_{3}+O_{p}(n^{-2})\label{eq: alpha_asymptotic_proof}
\end{equation}
Since $\hat{E}(L-1)^{2}=n^{-1}O_{p}(1)$ which implies $L-1=n^{-1/2}\bar{O}_{p}^{(1/2)}(1)$,
by Assumption \ref{assu: psi_expansion} and Lemmas \ref{lem: integrate_moment},\ref{lem: Integrate_moment_2}, we have that 
\[
\frac{\partial}{\partial L_{i}}n\psi(\hat{P}L)-\frac{1}{n}\sum_{i=1}^{n}\frac{\partial}{\partial L_{i}}n\psi(\hat{P}L)=IF_{i}+\frac{1}{n}IF_{ij}(l_{j}-1)+\frac{1}{n^{2}}IF_{ijk}(l_{j}-1)(l_{k}-1)=IF_{i}+\bar{O}_{p}^{(1/5)}(n^{-1/2})
\]
(see the beginning of Section \ref{subsec: DRO_expansion_computation} for the definition of the residual term). Taking the sum of squares in the preceding equation, we get
\[
\sum_{i=1}^{n}\left(\frac{\partial}{\partial L_{i}}n\psi(\hat{P}L)-\frac{1}{n}\sum_{i=1}^{n}\frac{\partial}{\partial L_{i}}n\psi(\hat{P}L)\right)^{2}=n\left(\hat{E}IF_{1}^{2}+O_{p}(n^{-1/2})\right)
\]
Plugging the above equation into \eqref{eq: alpha_asymptotic_proof},
and after a little algebra, we will obtain
\[
\frac{c_{4}}{\hat{E}IF_{1}^{2}}+O_{p}(n^{-1/2})\leq n\tilde{\alpha}^{2}\leq\frac{C_{4}}{\hat{E}IF_{1}^{2}}+O_{p}(n^{-1/2}).
\]
By the condition on $IF_{1}$ in Assumption \ref{assu: psi_expansion}, from the above we get
\[
\tilde{\alpha}=O_{p}(n^{-1/2}),\tilde{\alpha}^{-1}=O_{p}(n^{1/2}).
\]
Then in (\ref{eq: ab}), multiplying $\tilde{\alpha}^{-1}$, we have
that 
\[
\beta=O_{p}(1).
\]
This finishes the proof.
\end{proof}

% \subsection{Proof of \eqref{eq: phi_L_expansion}}
\begin{proof}[Proof of \eqref{eq: phi_L_expansion}]
First, we consider $\hat{E}(L-1)^{2}=\hat{E}(L_{1}^{\prime2}+L_{2}^{\prime2}+2L_{1}^{\prime}L_{2}^{\prime}+2L_{1}^{\prime}L_{3}^{\prime})+O_{p}(n^{-5/2})$.
A technical issue is that, if we simply plug in $L=1+L_1^\prime+L_2^{\prime}+L_3^\prime + n^{-2}\left(\bar{O}_{p}^{(4/5)}(1)\right)$, we would get that
\[
(L-1)^{2}=\left(L_{1}^{\prime}+L_{2}^{\prime}+L_{3}^{\prime}+n^{-2}\left(\bar{O}_{p}^{(4/5)}(1)\right)\right)^{2}
\]
This would involve a multiplication of two $\bar{O}_{p}^{(4/5)}(1)$
terms which would give $\bar{O}_{p}^{(8/5)}(1)$. Since $8/5>1$, it is not guaranteed
that its expectation under $\hat{P}$ is $O_p(1)$. To address this
issue, notice that by considering similar lower order expansions,
we know that the following relations hold:
\[
\begin{array}{c}
L=1+L_{1}^{\prime}+L_{2}^{\prime}+L_{3}^{\prime}+n^{-2}\bar{O}_{p}^{(4/5)}(1)\\
L=1+L_{1}^{\prime}+L_{2}^{\prime}+n^{-3/2}\bar{O}_{p}^{(3/5)}(1)\\
L=1+L_{1}^{\prime}+n^{-1}\bar{O}_{p}^{(2/5)}(1)\\
L=1+n^{-1/2}\bar{O}_{p}^{(1/5)}(1)
\end{array}
\]
Hence, we can write $(L-1)^{2}$ as
\begin{align*}
(L-1)^{2} & =(L_{1}^{\prime}+L_{2}^{\prime}+L_{3}^{\prime}+n^{-2}\bar{O}_{p}^{(4/5)}(1))(L-1)\\
 & =L_{1}^{\prime}(L-1)+L_{2}^{\prime}(L-1)+L_{3}^{\prime}(L-1)+n^{-2}\bar{O}_{p}^{(4/5)}(1)(L-1)\\
 & =L_{1}^{\prime}\left(L_{1}^{\prime}+L_{2}^{\prime}+L_{3}^{\prime}+n^{-2}\bar{O}_{p}^{(4/5)}(1)\right)+L_{2}^{\prime}\left(L_{1}^{\prime}+L_{2}^{\prime}+n^{-3/2}\bar{O}_{p}^{(3/5)}(1)\right)\\
 & +L_{3}^{\prime}\left(L_{1}^{\prime}+n^{-1}\bar{O}_{p}^{(2/5)}(1)\right)+n^{-2}\bar{O}_{p}^{(4/5)}(1)n^{-1/2}\bar{O}_{p}^{(1/5)}(1)\\
 & =L_{1}^{\prime}\left(L_{1}^{\prime}+L_{2}^{\prime}+L_{3}^{\prime}\right)+L_{2}^{\prime}\left(L_{1}^{\prime}+L_{2}^{\prime}\right)+L_{3}^{\prime}L_{1}^{\prime}+n^{-5/2}\bar{O}_{p}^{(1)}(1)
\end{align*}
In the last equation, we used that $L_{k}^{\prime}=n^{-k/2}\bar{O}_{p}^{(k/5)}(1)$.
Therefore, taking expectation, we get that $\hat{E}(L-1)^{2}=\hat{E}(L_{1}^{\prime2}+L_{2}^{\prime2}+2L_{1}^{\prime}L_{2}^{\prime}+2L_{1}^{\prime}L_{3}^{\prime})+O_{p}(n^{-5/2})$.

Next, we consider $(L-1)^{3}$. This can be handled in a similar way.
With similar computation as above, we may get that 
\[
(L-1)^{2}=L_{1}^{\prime}\left(L_{1}^{\prime}+L_{2}^{\prime}\right)+L_{2}^{\prime}L_{1}^{\prime}+n^{-2}\bar{O}_{p}^{(4/5)}(1).
\]
Then by regarding $(L-1)^{3}$ as the multiplication of $(L-1)^{2}$
and $(L-1)$, we can handle it with a similar procedure as above. The
residual is also given by $n^{-5/2}\bar{O}_{p}^{(1)}(1)$. $(L-1)^{4}$
can be handled in a similar way.
\end{proof}

% \subsection{The plug-in step in the end of the proof of Theorem \ref{thm: DRO_expansion}}\label{subsec: DRO_expansion_plugin}

\begin{proof}[The plug-in step in the end of the proof of Theorem \ref{thm: DRO_expansion}]
We consider the expression 
as given in \eqref{eq: plug_in_expression}.

Plugging in the expansion for $\hat{E}\phi(L)$, we get that \eqref{eq: plug_in_expression}
equals
\begin{align*}
 & \frac{1}{n}\sum_{i}IF_{i}(L_{1i}^{\prime}+L_{2i}^{\prime}+L_{3i}^{\prime})+\frac{1}{2n^{2}}\sum_{i,j}IF_{ij}(L_{1i}^{\prime}L_{1j}^{\prime}+2L_{2i}^{\prime}L_{1j}^{\prime})\\
 & +\frac{1}{6n^{3}}\sum_{i,j,k}IF_{ijk}L_{1i}^{\prime}L_{1j}^{\prime}L_{1k}^{\prime}-\tilde{\alpha}_{0}^{-1}\left[\begin{array}{c}
\frac{1}{2}\phi^{\prime\prime}(1)\hat{E}\left(L_{1}^{\prime2}+L_{2}^{\prime2}+2L_{1}^{\prime}L_{2}^{\prime}+2L_{1}^{\prime}L_{3}^{\prime}\right)-\frac{q}{2n}\\
+\frac{1}{6}\phi^{\prime\prime\prime}(1)\hat{E}\left(L_{1}^{\prime3}+3L_{1}^{\prime2}L_{2}^{\prime}\right)+\frac{1}{24}\phi^{(4)}(1)\hat{E}L_{1}^{4}
\end{array}\right]+O_{p}(n^{-2})
\end{align*}
Since $\tilde{\alpha}_0$ is found to satisfy \eqref{eq: alpha_equation_2nd}, we know that the term in the bracket above is $O_{p}(n^{-2})$. Therefore, we can replace $\tilde{\alpha}_{0}^{-1}$
above with its leading term, i.e., $\sqrt{\frac{nh^{\prime}(0)\hat{\kappa}_{2}}{q}}=\sqrt{\frac{n\hat{\kappa}_{2}}{\phi^{\prime\prime}(1)q}}$.
Therefore, it suffices to compute
\begin{align}
 & \frac{1}{n}\sum_{i}IF_{i}(L_{1i}^{\prime}+L_{2i}^{\prime}+L_{3i}^{\prime})+\frac{1}{2n^{2}}\sum_{i,j}IF_{ij}(L_{1i}^{\prime}L_{1j}^{\prime}+2L_{2i}^{\prime}L_{1j}^{\prime})\label{eq: plug_in_2}\\
 & +\frac{1}{6n^{3}}\sum_{i,j,k}IF_{ijk}L_{1i}^{\prime}L_{1j}^{\prime}L_{1k}^{\prime}-\sqrt{\frac{n\hat{\kappa}_{2}}{\phi^{\prime\prime}(1)q}}\left[\begin{array}{c}
\frac{1}{2}\phi^{\prime\prime}(1)\hat{E}\left(L_{1}^{\prime2}+L_{2}^{\prime2}+2L_{1}^{\prime}L_{2}^{\prime}+2L_{1}^{\prime}L_{3}^{\prime}\right)-\frac{q}{2n}\\
+\frac{1}{6}\phi^{\prime\prime\prime}(1)\hat{E}\left(L_{1}^{\prime3}+3L_{1}^{\prime2}L_{2}^{\prime}\right)+\frac{1}{24}\phi^{(4)}(1)\hat{E}L_{1}^{4}
\end{array}\right]+O_{p}(n^{-2})\nonumber 
\end{align}

 From the expressions for $L_{ki}^{\prime}$ and $\tilde{\alpha}_{0}$, we have that 
\begin{align*}
L_{1i}^{\prime} & =\frac{1}{\phi^{\prime\prime}(1)}IF_{i}\tilde{\alpha}_{0}\\
 & =\frac{1}{\phi^{\prime\prime}(1)}IF_{i}\left(\sqrt{\frac{q\phi^{\prime\prime}(1)}{n\hat{\kappa}_{2}}}-\frac{q\phi^{\prime\prime}(1)}{n\hat{\kappa}_{2}}\frac{\frac{1}{\phi^{\prime\prime}(1)^{2}}\hat{\mu}_{2,c}-\frac{1}{3}\frac{\phi^{\prime\prime\prime}(1)}{\phi^{\prime\prime}(1)^{3}}\hat{\gamma}}{\frac{1}{\phi^{\prime\prime}(1)}\hat{\kappa}_{2}}\right)\\
 & =IF_{i}\left(\sqrt{\frac{q}{\phi^{\prime\prime}(1)n\hat{\kappa}_{2}}}-\frac{q}{\phi^{\prime\prime}(1)n\hat{\kappa}_{2}}\frac{\hat{\mu}_{2,c}-\frac{1}{3}\frac{\phi^{\prime\prime\prime}(1)}{\phi^{\prime\prime}(1)}\hat{\gamma}}{\hat{\kappa}_{2}}\right)\\
 & =IF_{i}\left(\sqrt{\tilde{q}}-\tilde{q}\frac{\hat{\mu}_{2,c}-\frac{1}{3}\frac{\phi^{\prime\prime\prime}(1)}{\phi^{\prime\prime}(1)}\hat{\gamma}}{\hat{\kappa}_{2}}\right)
\end{align*}
Here, $\tilde{q}:=\frac{q}{\phi^{\prime\prime}(1)n\hat{\kappa}_{2}}$.
\begin{align*}
L_{2i}^{\prime}= & \tilde{\alpha}_{0}^{2}\left(\frac{1}{\phi^{\prime\prime}(1)^{2}}\frac{1}{n}IF_{ij}IF_{j}-\frac{1}{2}\frac{\phi^{\prime\prime\prime}(1)}{\phi^{\prime\prime}(1)^{3}}\left(IF_{i}^{2}-\hat{\kappa}_{2}\right)\right)\\
= & \left(\frac{q\phi^{\prime\prime}(1)}{n\hat{\kappa}_{2}}-2\sqrt{\frac{q\phi^{\prime\prime}(1)}{n\hat{\kappa}_{2}}}\frac{q\phi^{\prime\prime}(1)}{n\hat{\kappa}_{2}}\frac{\frac{1}{\phi^{\prime\prime}(1)^{2}}\hat{\mu}_{2,c}-\frac{1}{3}\frac{\phi^{\prime\prime\prime}(1)}{\phi^{\prime\prime}(1)^{3}}\hat{\gamma}}{\frac{1}{\phi^{\prime\prime}(1)}\hat{\kappa}_{2}}\right)\\
 & \times\left(\frac{1}{\phi^{\prime\prime}(1)^{2}}\frac{1}{n}IF_{ij}IF_{j}-\frac{1}{2}\frac{\phi^{\prime\prime\prime}(1)}{\phi^{\prime\prime}(1)^{3}}\left(IF_{i}^{2}-\hat{\kappa}_{2}\right)\right)+n^{-2}\bar{O}_{p}^{(4/5)}(1)\\
= & \left(\tilde{q}-2\tilde{q}^{3/2}\frac{\hat{\mu}_{2,c}-\frac{1}{3}\frac{\phi^{\prime\prime\prime}(1)}{\phi^{\prime\prime}(1)}\hat{\gamma}}{\hat{\kappa}_{2}}\right)\left(\frac{1}{n}IF_{ij}IF_{j}-\frac{1}{2}\frac{\phi^{\prime\prime\prime}(1)}{\phi^{\prime\prime}(1)}\left(IF_{i}^{2}-\hat{\kappa}_{2}\right)\right)+n^{-2}\bar{O}_{p}^{(4/5)}(1).
\end{align*}
Note that for the derivatives of $h$ in the expression of $L_{3i}^{\prime}$, from $h=(\phi^{\prime})^{-1}-1$ we have that $h^{\prime\prime}(0)=-\frac{\phi^{\prime\prime\prime}(1)}{(\phi^{\prime\prime}(1))^3},h^{\prime\prime\prime}(0)=-\frac{\phi^{(4)}(1)}{(\phi^{\prime\prime}(1))^4}+3\frac{(\phi^{\prime\prime\prime}(1))^2}{(\phi^{\prime\prime}(1))^5}$. Therefore,
\begin{align*}
L_{3i}^{\prime} & =\tilde{\alpha}^{3}\left(\begin{array}{c}
-\frac{\phi^{\prime\prime\prime}(1)}{\phi^{\prime\prime}(1)^{4}}\left(IF_{ij}IF_{i}IF_{j}-\hat{\mu}_{2,c}\right)-\frac{1}{6}\left(\frac{\phi^{(4)}(1)}{\phi^{\prime\prime}(1)^{4}}-\frac{\phi^{\prime\prime\prime}(1)^{2}}{\phi^{\prime\prime}(1)^{5}}\right)\left(IF_{i}^{3}-\hat{\gamma}\right)\\
-\frac{1}{2}\frac{\phi^{\prime\prime\prime}(1)^{2}}{\phi^{\prime\prime}(1)^{5}}IF_{i}\hat{\kappa}_{2}+\frac{1}{n^{2}\phi^{\prime\prime}(1)^{3}}IF_{ij}IF_{jk}IF_{k}+\frac{1}{6n^{2}\phi^{\prime\prime}(1)^{3}}IF_{ijk}IF_{j}IF_{k}\\
-\frac{1}{2}\frac{\phi^{\prime\prime\prime}(1)}{\phi^{\prime\prime}(1)^{4}}IF_{ij}\left(IF_{j}\right)^{2}
\end{array}\right)\\
 & =\tilde{q}^{3/2}\left(\begin{array}{c}
-\frac{\phi^{\prime\prime\prime}(1)}{\phi^{\prime\prime}(1)}\left(IF_{ij}IF_{i}IF_{j}-\hat{\mu}_{2,c}\right)-\frac{1}{6}\left(\frac{\phi^{(4)}(1)}{\phi^{\prime\prime}(1)}-\frac{\phi^{\prime\prime\prime}(1)^{2}}{\phi^{\prime\prime}(1)^{2}}\right)\left(IF_{i}^{3}-\hat{\gamma}\right)\\
-\frac{1}{2}\frac{\phi^{\prime\prime\prime}(1)^{2}}{\phi^{\prime\prime}(1)^{2}}IF_{i}\hat{\kappa}_{2}+\frac{1}{n^{2}}IF_{ij}IF_{jk}IF_{k}+\frac{1}{6n^{2}}IF_{ijk}IF_{j}IF_{k}\\
-\frac{1}{2}\frac{\phi^{\prime\prime\prime}(1)}{\phi^{\prime\prime}(1)}IF_{ij}\left(IF_{j}\right)^{2}
\end{array}\right).
\end{align*}
Again, recall that we do not sum over $i$ when write $IF_{ij}IF_{i}IF_{j}$.
Thus $IF_{ij}IF_{i}IF_{j}\neq\hat{\mu}_{2,c}$.

Next, we can plug in the expressions of $L_{ki}^{\prime}$ in each
part of \eqref{eq: plug_in_2}. We have that
\begin{align*}
 & \frac{1}{n}\sum_{i}IF_{i}(L_{1i}^{\prime}+L_{2i}^{\prime}+L_{3i}^{\prime})\\
= & \hat{\kappa}_{2}\left(\sqrt{\tilde{q}}-\tilde{q}\frac{\hat{\mu}_{2,c}-\frac{1}{3}\frac{\phi^{\prime\prime\prime}(1)}{\phi^{\prime\prime}(1)}\hat{\gamma}}{\hat{\kappa}_{2}}\right)+\left(\tilde{q}-2\tilde{q}^{3/2}\frac{\hat{\mu}_{2,c}-\frac{1}{3}\frac{\phi^{\prime\prime\prime}(1)}{\phi^{\prime\prime}(1)}\hat{\gamma}}{\hat{\kappa}_{2}}\right)\left(\hat{\mu}_{2,c}-\frac{1}{2}\frac{\phi^{\prime\prime\prime}(1)}{\phi^{\prime\prime}(1)}\hat{\gamma}\right)\\
 & +\tilde{q}^{3/2}\left(\begin{array}{c}
-\frac{\phi^{\prime\prime\prime}(1)}{\phi^{\prime\prime}(1)}\hat{\mu}_{2,b}-\frac{1}{6}\left(\frac{\phi^{(4)}(1)}{\phi^{\prime\prime}(1)}-\frac{\phi^{\prime\prime\prime}(1)^{2}}{\phi^{\prime\prime}(1)^{2}}\right)\hat{\mu}_{4}\\
-\frac{1}{2}\frac{\phi^{\prime\prime\prime}(1)^{2}}{\phi^{\prime\prime}(1)^{2}}\hat{\kappa}_{2}^{2}+\frac{1}{n^{2}}\hat{\mu}_{2,a}+\frac{1}{6n^{2}}\hat{\mu}_{3,c}-\frac{1}{2}\frac{\phi^{\prime\prime\prime}(1)}{\phi^{\prime\prime}(1)}\hat{\mu}_{2,b}
\end{array}\right)+O_{p}(n^{-2})\\
= & \tilde{q}^{1/2}\hat{\kappa}_{2}+\tilde{q}^{3/2}\left(\begin{array}{c}
-\frac{\phi^{\prime\prime\prime}(1)}{\phi^{\prime\prime}(1)}\hat{\mu}_{2,b}-\frac{1}{6}\left(\frac{\phi^{(4)}(1)}{\phi^{\prime\prime}(1)}-\frac{\phi^{\prime\prime\prime}(1)^{2}}{\phi^{\prime\prime}(1)^{2}}\right)\hat{\mu}_{4}\\
-\frac{1}{2}\frac{\phi^{\prime\prime\prime}(1)^{2}}{\phi^{\prime\prime}(1)^{2}}\hat{\kappa}_{2}^{2}+\hat{\mu}_{2,a}+\frac{1}{6n^{2}}\hat{\mu}_{3,c}-\frac{1}{2}\frac{\phi^{\prime\prime\prime}(1)}{\phi^{\prime\prime}(1)}\hat{\mu}_{2,b}\\
-2\frac{\hat{\mu}_{2,c}-\frac{1}{3}\frac{\phi^{\prime\prime\prime}(1)}{\phi^{\prime\prime}(1)}\hat{\gamma}}{\hat{\kappa}_{2}}\left(\hat{\mu}_{2,c}-\frac{1}{2}\frac{\phi^{\prime\prime\prime}(1)}{\phi^{\prime\prime}(1)}\hat{\gamma}\right)
\end{array}\right)+O_{p}(n^{-2}),
\end{align*}
\begin{align*}
 & \frac{1}{2n^{2}}\sum_{i,j}IF_{ij}(L_{1i}^{\prime}L_{1j}^{\prime}+2L_{2i}^{\prime}L_{1j}^{\prime})\\
= & \left(\tilde{q}-2\tilde{q}^{3/2}\frac{\hat{\mu}_{2,c}-\frac{1}{3}\frac{\phi^{\prime\prime\prime}(1)}{\phi^{\prime\prime}(1)}\hat{\gamma}}{\hat{\kappa}_{2}}\right)\frac{\hat{\mu}_{2,c}}{2}+\tilde{q}^{3/2}\left(\hat{\mu}_{2,a}-\frac{1}{2}\frac{\phi^{\prime\prime\prime}(1)}{\phi^{\prime\prime}(1)}\hat{\mu}_{2,b}\right)+O_{p}(n^{-2}),\\
= & \tilde{q}\frac{\hat{\mu}_{2,c}}{2}+\tilde{q}^{3/2}\left(\hat{\mu}_{2,a}-\frac{1}{2}\frac{\phi^{\prime\prime\prime}(1)}{\phi^{\prime\prime}(1)}\hat{\mu}_{2,b}-\frac{\hat{\mu}_{2,c}-\frac{1}{3}\frac{\phi^{\prime\prime\prime}(1)}{\phi^{\prime\prime}(1)}\hat{\gamma}}{\hat{\kappa}_{2}}\hat{\mu}_{2,c}\right),
\end{align*}
\[
\frac{1}{6n^{3}}\sum_{i,j,k}IF_{ijk}L_{1i}^{\prime}L_{1j}^{\prime}L_{1k}^{\prime}=\frac{1}{6}\tilde{q}^{3/2}\hat{\mu}_{3,c}+O_{p}(n^{-2}),
\]
\begin{align*}
 & \sqrt{\frac{n\hat{\kappa}_{2}}{\phi^{\prime\prime}(1)q}}\left[\frac{1}{2}\phi^{\prime\prime}(1)\hat{E}\left(L_{1}^{\prime2}+L_{2}^{\prime2}+2L_{1}^{\prime}L_{2}^{\prime}+2L_{1}^{\prime}L_{3}^{\prime}\right)-\frac{q}{2n}\right]\\
= & \tilde{q}^{-1/2}\left[\frac{1}{2}\hat{E}\left(L_{1}^{\prime2}+L_{2}^{\prime2}+2L_{1}^{\prime}L_{2}^{\prime}+2L_{1}^{\prime}L_{3}^{\prime}\right)-\frac{\tilde{q}}{2}\hat{\kappa}_{2}\right]\\
= & \begin{array}{c}
\frac{\tilde{q}^{1/2}}{2}\hat{\kappa}_{2}-\tilde{q}\left(\hat{\mu}_{2,c}-\frac{1}{3}\frac{\phi^{\prime\prime\prime}(1)}{\phi^{\prime\prime}(1)}\hat{\gamma}\right)+\frac{1}{2}\tilde{q}^{3/2}\left(\frac{\hat{\mu}_{2,c}-\frac{1}{3}\frac{\phi^{\prime\prime\prime}(1)}{\phi^{\prime\prime}(1)}\hat{\gamma}}{\hat{\kappa}_{2}}\right)^{2}\hat{\kappa}_{2}+\frac{1}{2}\tilde{q}^{3/2}\left(\hat{\mu}_{2,a}-\frac{\phi^{\prime\prime\prime}(1)}{\phi^{\prime\prime}(1)}\hat{\mu}_{2,b}+\frac{1}{4}\frac{\phi^{\prime\prime\prime}(1)^{2}}{\phi^{\prime\prime}(1)^{2}}\left(\hat{\mu}_{4}-\kappa_{2}^{2}\right)\right)\end{array}\\
 & +\left(\tilde{q}-3\tilde{q}^{3/2}\frac{\hat{\mu}_{2,c}-\frac{1}{3}\frac{\phi^{\prime\prime\prime}(1)}{\phi^{\prime\prime}(1)}\hat{\gamma}}{\hat{\kappa}_{2}}\right)\left(\hat{\mu}_{2,c}-\frac{1}{2}\frac{\phi^{\prime\prime\prime}(1)}{\phi^{\prime\prime}(1)}\hat{\gamma}\right)\\
 & +\tilde{q}^{3/2}\left(\begin{array}{c}
-\frac{\phi^{\prime\prime\prime}(1)}{\phi^{\prime\prime}(1)}\hat{\mu}_{2,b}-\frac{1}{6}\left(\frac{\phi^{(4)}(1)}{\phi^{\prime\prime}(1)}-\frac{\phi^{\prime\prime\prime}(1)^{2}}{\phi^{\prime\prime}(1)^{2}}\right)\hat{\mu}_{4}\\
-\frac{1}{2}\frac{\phi^{\prime\prime\prime}(1)^{2}}{\phi^{\prime\prime}(1)^{2}}\hat{\kappa}_{2}^{2}+\hat{\mu}_{2,a}+\frac{1}{6}\hat{\mu}_{3,c}-\frac{1}{2}\frac{\phi^{\prime\prime\prime}(1)}{\phi^{\prime\prime}(1)}\hat{\mu}_{2,b}
\end{array}\right)-\frac{\tilde{q}^{1/2}}{2}\hat{\kappa}_{2}+O_{p}(n^{-2})\\
= & -\tilde{q}\frac{1}{6}\frac{\phi^{\prime\prime\prime}(1)}{\phi^{\prime\prime}(1)}\hat{\gamma}+\tilde{q}^{3/2}\left(\begin{array}{c}
-2\frac{\phi^{\prime\prime\prime}(1)}{\phi^{\prime\prime}(1)}\hat{\mu}_{2,b}-\frac{1}{6}\left(\frac{\phi^{(4)}(1)}{\phi^{\prime\prime}(1)}-\frac{\phi^{\prime\prime\prime}(1)^{2}}{\phi^{\prime\prime}(1)^{2}}\right)\hat{\mu}_{4}-\frac{1}{2}\frac{\phi^{\prime\prime\prime}(1)^{2}}{\phi^{\prime\prime}(1)^{2}}\hat{\kappa}_{2}^{2}+\hat{\mu}_{2,a}\\
+\frac{1}{6}\hat{\mu}_{3,c}+\frac{1}{2}\hat{\mu}_{2,a}+\frac{1}{8}\frac{\phi^{\prime\prime\prime}(1)^{2}}{\phi^{\prime\prime}(1)^{2}}\left(\hat{\mu}_{4}-\kappa_{2}^{2}\right)\\
+\frac{1}{2}\left(\frac{\hat{\mu}_{2,c}-\frac{1}{3}\frac{\phi^{\prime\prime\prime}(1)}{\phi^{\prime\prime}(1)}\hat{\gamma}}{\hat{\kappa}_{2}}\right)\left(\hat{\mu}_{2,c}-\frac{1}{3}\frac{\phi^{\prime\prime\prime}(1)}{\phi^{\prime\prime}(1)}\hat{\gamma}-6\left(\hat{\mu}_{2,c}-\frac{1}{2}\frac{\phi^{\prime\prime\prime}(1)}{\phi^{\prime\prime}(1)}\hat{\gamma}\right)\right)
\end{array}\right)+O_{p}(n^{-2}),
\end{align*}
and
\begin{align*}
 & \sqrt{\frac{n\hat{\kappa}_{2}}{\phi^{\prime\prime}(1)q}}\left[\frac{1}{6}\phi^{\prime\prime\prime}(1)\hat{E}\left(L_{1}^{\prime3}+3L_{1}^{\prime2}L_{2}^{\prime}\right)+\frac{1}{24}\phi^{(4)}(1)\hat{E}L_{1}^{4}\right]\\
= & \tilde{q}^{-1/2}\left[\frac{1}{6}\frac{\phi^{\prime\prime\prime}(1)}{\phi^{\prime\prime}(1)}\hat{E}\left(L_{1}^{\prime3}+3L_{1}^{\prime2}L_{2}^{\prime}\right)+\frac{1}{24}\frac{\phi^{(4)}(1)}{\phi^{\prime\prime}(1)}\hat{E}L_{1}^{4}\right]\\
= & \frac{1}{6}\frac{\phi^{\prime\prime\prime}(1)}{\phi^{\prime\prime}(1)}\left(\tilde{q}-3\tilde{q}^{3/2}\frac{\hat{\mu}_{2,c}-\frac{1}{3}\frac{\phi^{\prime\prime\prime}(1)}{\phi^{\prime\prime}(1)}\hat{\gamma}}{\hat{\kappa}_{2}}\right)\hat{\gamma}+\frac{1}{6}\frac{\phi^{\prime\prime\prime}(1)}{\phi^{\prime\prime}(1)}3\tilde{q}^{3/2}\left(\hat{\mu}_{2,b}-\frac{1}{2}\frac{\phi^{\prime\prime\prime}(1)}{\phi^{\prime\prime}(1)}\left(\hat{\mu}_{4}-\hat{\kappa}_{2}^{2}\right)\right)\\
 & +\frac{1}{24}\frac{\phi^{(4)}(1)}{\phi^{\prime\prime}(1)}\tilde{q}^{3/2}\hat{\mu}_{4}+O_{p}(n^{-2})\\
= & \tilde{q}\frac{1}{6}\frac{\phi^{\prime\prime\prime}(1)}{\phi^{\prime\prime}(1)}\hat{\gamma}+\tilde{q}^{3/2}\left(-\frac{\hat{\mu}_{2,c}-\frac{1}{3}\frac{\phi^{\prime\prime\prime}(1)}{\phi^{\prime\prime}(1)}\hat{\gamma}}{\hat{\kappa}_{2}}\frac{\hat{\gamma}}{2}+\frac{1}{2}\frac{\phi^{\prime\prime\prime}(1)}{\phi^{\prime\prime}(1)}\hat{\mu}_{2,b}-\frac{1}{4}\frac{\phi^{\prime\prime\prime}(1)^{2}}{\phi^{\prime\prime}(1)^{2}}\left(\hat{\mu}_{4}-\hat{\kappa}_{2}^{2}\right)+\frac{1}{24}\frac{\phi^{(4)}(1)}{\phi^{\prime\prime}(1)}\hat{\mu}_{4}\right)+O_{p}(n^{-2}).
\end{align*}
Plugging in the preceding five equations, it is not hard to see that
(\ref{eq: plug_in_2}) is equal to the desired maximum optimal value
as claimed in Theorem \ref{thm: DRO_expansion}.
\end{proof}

% \subsection{Proof of Corollary \ref{cor: function_of_mean}}

\begin{proof}[Proof of Corollary \ref{cor: function_of_mean}]
WLOG, we suppose that $\mu_{0}=0$ (otherwise we just need to consider
$\tilde{\theta}(\mu):=\theta(\mu-\mu_{0})$ and $\tilde{X}=X-\mu_{0}$).
Then by Proposition 1, the influence functions are given by $IF_{k}(X_{1},\dots,X_{k};P_{0})=\theta_{i_{1}i_{2}\dots i_{k}}X_{1}^{i_{1}}\dots X_{k}^{i_{k}}$.
Therefore, 
\begin{align*}
 & \kappa_{2}=EIF_{1}^{2}(X;P_{0})=E\theta_{i}\theta_{j}X^{i}X^{j}=\alpha_{ij}\theta^{i}\theta^{j}=\delta_{ij}\theta^{i}\theta^{j}=\sum_{i}\left(\theta^{i}\right)^{2}=Q^{-1},
\end{align*}
Similarly,
\[
\gamma=EIF_{1}^{3}(X;P_{0})=\alpha_{ijk}\theta^{i}\theta^{j}\theta^{k},
\]
\[
\mu_{4}=\alpha_{ijkl}\theta^{i}\theta^{j}\theta^{k}\theta^{l},
\]
\[
\mu_{2,a}=E[X_{i}X_{j}\theta^{ij}X_{k}Z_{l}\theta^{kl}Y_{p}\theta^{p}Z_{u}\theta^{u}]=\alpha_{ik}\alpha_{jp}\alpha_{lu}\theta^{ij}\theta^{kl}\theta^{p}\theta^{u},
\]
\[
\mu_{2,b}=E[X_{i}Y_{j}\theta^{ij}X_{k}X_{l}\theta^{k}\theta^{l}Y_{p}\theta^{p}]=\alpha_{ikl}\delta_{jp}\theta^{ij}\theta^{k}\theta^{l}\theta^{p},
\]
\[
\mu_{2,c}=E[X_{i}Y_{j}\theta^{ij}X_{k}\theta^{k}Y_{l}\theta^{l}]=\delta_{ik}\delta_{jl}\theta^{ij}\theta^{k}\theta^{l},
\]
\[
\mu_{2,d}=E[X_{i}X_{j}\theta^{ij}]=\delta_{ij}\theta^{ij},
\]
\[
\mu_{2,2}=E[X_{i}Y_{j}\theta^{ij}X_{k}Y_{l}\theta^{kl}]=\delta_{ik}\delta_{jl}\theta^{ij}\theta^{kl},
\]
\[
\mu_{1,2,d}=E[X_{i}\theta^{i}X_{j}X_{k}\theta^{jk}]=\alpha_{ijk}\theta^{i}\theta^{jk}.
\]
From these and the definition of $t_{i}$, we can get that
\[
t_{1}=t_{2}=\gamma^{2}\kappa_{2}^{-3},t_{3}=\mu_{4}\kappa_{2}^{-2},
\]
\begin{align*}
t_{4} & =\alpha^{jkl}\theta^{j}\theta^{mn}\kappa_{2}^{-1}\left(\delta^{mk}-\theta^{m}\theta^{k}\kappa_{2}^{-1}\right)\left(\delta^{nl}-\theta^{n}\theta^{l}\kappa_{2}^{-1}\right)\\
 & =\mu_{1,2,d}\kappa_{2}^{-1}-2\mu_{2,b}\kappa_{2}^{-2}+\gamma\mu_{2,c}\kappa_{2}^{-3},
\end{align*}
and
\begin{align*}
t_{5} & =\kappa_{2}^{-1}\theta^{jk}\theta^{lm}(\delta^{jk}-\theta^{j}\theta^{k}\kappa_{2}^{-1})(\delta^{lm}-\theta^{l}\theta^{m}\kappa_{2}^{-1})-2\kappa_{2}^{-1}\theta^{jk}\theta^{lm}(\delta^{jl}-\theta^{j}\theta^{l}\kappa_{2}^{-1})(\delta^{km}-\theta^{k}\theta^{m}\kappa_{2}^{-1})\\
 & =\kappa_{2}^{-1}\left(\mu_{2,d}^{2}-2\mu_{2,c}\mu_{2,d}\kappa_{2}^{-1}+\mu_{2,c}^{2}\kappa_{2}^{-2}\right)-2\kappa_{2}^{-1}\left(\mu_{2,2}-2\mu_{2,a}\kappa_{2}^{-1}+\mu_{2,c}^{2}\kappa_{2}^{-2}\right).
\end{align*}
This gives
\begin{align*}
 & \frac{5}{3}t_{1}-2t_{2}+\frac{1}{2}t_{3}-t_{4}+\frac{1}{4}t_{5}\\
= & \frac{1}{36\kappa_{2}^{3}}\left(\begin{array}{c}
-12\gamma^{2}+18\kappa_{2}\mu_{4}+36\kappa_{2}(\mu_{2,a}+2\mu_{2,b})-36\gamma\mu_{2,c}\\
-9\mu_{2,c}^{2}-18\kappa_{2}\mu_{2,c}\mu_{2,d}+9\kappa_{2}^{2}(-2\mu_{2,2}+\mu_{2,d}^{2}-4\mu_{1,2,d})
\end{array}\right).
\end{align*}
It is not hard to verify that $-A(x)/x$ where $A(x)$ is given as in Theorem \ref{thm: Bartlett} with $\phi(x)=-\log x -x + 1$ (whose derivatives at 1 are given by $\phi^{\prime\prime}(1)=1,\phi^{\prime\prime\prime}(1)=-2,\phi^{(4)}(1)=6$) gives the same formula. Hence the corollary is proved. 

\end{proof}
%\subsection{proof of claim}
%We consider event (I): the function $f$ is monotonely increasing
%when $y$ belongs to the interval containing $\psi_{min}-\psi(\hat{P}),\psi_{\max}-\psi(\hat{P}),\psi(P_{0})-\psi(\hat{P})$.
%The probability that (I) does not happen is bounded by the probability
%that 
%\[
%\frac{1}{\hat{C}_{1}}\leq2\left|\frac{\hat{C}_{2}}{\hat{C}_{1}^{3}}\right|\tilde{y}+\frac{3}{\hat{C}_{1}^{3}}\left|\left(2\left(\frac{\hat{C}_{2}}{\hat{C}_{1}}\right)^{2}-\frac{\hat{C}_{3}}{\hat{C}_{1}}\right)\right|\tilde{y}^{2}+\sum_{j=4,5}jK_{j}y^{j-1}
%\]
%or
%\[
%\hat{C}_{1}\left(2\left|\frac{\hat{C}_{2}}{\hat{C}_{1}^{3}}\right|\tilde{y}+\frac{3}{\hat{C}_{1}^{3}}\left|\left(2\left(\frac{\hat{C}_{2}}{\hat{C}_{1}}\right)^{2}-\frac{\hat{C}_{3}}{\hat{C}_{1}}\right)\right|\tilde{y}^{2}+\sum_{j=4,5}jK_{j}y^{j-1}\right)\geq1
%\]
%Here $\tilde{y}=\max\{\left|\psi_{min}-\psi(\hat{P})\right|,\left|\psi_{\max}-\psi(\hat{P})\right|,\left|\psi(P_{0})-\psi(\hat{P})\right|\}$.
%Under the enhanced assumptions, we have that $\tilde{y}=O_{p,\delta}(n^{-1/2})$
%so the LHS above is $O_{p,\delta}(n^{-1/2})$ so the probability of
%the inequality is $O(n^{-2})$.

%When (I) happens,

% \end{supplement}

% \begin{supplement}
\section{Computation for Edgeworth Expansion\label{sec:Computation_edgeworth}}

%We investigate the edgeworth expansion for $W_{n}$. As explained
%in the main discussion, $W_{n}$ has a rigorous edgeworth expansion
%when $\psi$ is a function of the mean. For the general case, we will
%apply a delta method which can give a formal result.

%For the Edgeworth expansion of $W_n$, we want that 
%\[
%P(W_n\leq x) = \Phi(x) + \sum_{j=1}^s n^{-j/2}p_j(x)\phi(x) + o(n^{-s/2}).
%\]

%where $p_1(x)= -k_{1,2}+\frac{1}{6}k_{3,1}(x^2-1)$, \[p_2(x)=-x\left\{\frac{1}{2}(k_{2,2}+k_{1,2}^2)+\frac{1}{24}(k_{4,1}+4k_{1,2}k_{3,1})(x^2-3)+\frac{1}{72}k_{3,1}^2(x^4-10x^2+15)\right\}\] and in general, $p_j$ is a polynomial of degree no more than $3j$, even for odd $j$ and odd for even $j$. Here, we suppose that the jth cumulant of $W_n$ is given as the expansion 
%\begin{equation}\label{eq: cumulant_expansion}
%\kappa_{j,n} = n^{-(j-2)/2}(k_{j,1}+n^{-1}k_{j,2}+n^{-2}k_{j,3})
%\end{equation}

%\cite{Bhattacharya1983} gives the expansion for the characteristic function when $W_n$ is a von-Mises functional, under the following conditions: 
%1) Let $f_n(t)=Ee^{itW_n}$ be the characteristic function of $W_n$. Express $f_n(t)$ as $f(it,n^{-1/2})$. Suppose that $f(\cdot,\cdot)$ is analytical in a neighborhood of $(0,0)\in \mathbb{C}^2$.  
%2) $Ee^{tW_n}<\infty$ for any $-\infty<t<\infty$.
%3) \eqref{eq: cumulant_expansion} holds with residual $o(n^{-(s-j)/2})$.
%Under these conditions, it can be shown that the characteristic function and its derivatives can be approximated in the following sense:

\paragraph*{Notations} \sloppy For ease of computation, we let $IF_{1}:=IF_{1}(X_{1};P_{0}),IF_{2}:=IF_{2}(X_{1},X_{2};P_{0})$
and $IF_{3}:=IF_{3}(X_{1},X_{2},X_{3};P_{0})$. Sometimes we only hide $P_0$ and write $IF_1(X):=IF_1(X;P_0), IF_2(X,Y):=IF_1(X,Y;P_0), IF_3(X,Y,Z):=IF_3(X,Y,Z;P_0)$.  Let $\hat{E}_{i}:=\hat{E}_{X_{i}}$ denote the operation of taking expectation for $X_i$ under distribution $\hat{P}$ (conditional on other variables), e.g., $\hat{E}_2IF_2 = \frac{1}{n}\sum_iIF_2(X_1,X_i;P_0)$. Similarly define $E_i:=E_{X_i}$. Let $\Delta_{i}:=\hat{E}_{i}-E_{i}$. $\hat{E}$ means taking expectation with the distribution $X_1,X_2,X_3\stackrel{\text{i.i.d.}}{\sim}\hat{P}$ (so we have $\hat{E}=\hat{E}_1\hat{E}_2\hat{E}_3$).

\subsection{Computation of the Cumulants of \texorpdfstring{$W_{n}$}{TEXT}}

%and similarly in the computation of other terms, we can replace the
%influence functions evaluated at $\hat{P}$ with the expansion given
%in the previous subsection.

Let $A_{n}:=\sqrt{n}\frac{\psi(P_{0})-\psi(\hat{P})}{\hat{C}_{1}}$,
then $W_{n}$ could be written as 
\[
W_{n}=A_{n}+n^{-1/2}\frac{\hat{C}_{2}}{\hat{C}_{1}}A_{n}^{2}+n^{-1}\left(2\left(\frac{\hat{C}_{2}}{\hat{C}_{1}}\right)^{2}-\frac{\hat{C}_{3}}{\hat{C}_{1}}\right)A_{n}^{3}+O_{p}(n^{-3/2})
\]
To compute the first four cumulants of $W_{n}$ with residual $O(n^{-3/2})$,
it suffices to compute the expectation of $A_{n},A_{n}^{2},A_{n}^{3},A_{n}^{4}$
with residual $O(n^{-3/2})$, and the expectation of $A_{n}^{i}\frac{\hat{C}_{2}}{\hat{C}_{1}},i=2,3,4,5$
with residual $O(n^{-1})$, and the expectation of $\left(2\left(\frac{\hat{C}_{2}}{\hat{C}_{1}}\right)^{2}-\frac{\hat{C}_{3}}{\hat{C}_{1}}\right)A_{n}^{j},j=3,4,5,6$
with residual $O(n^{-1/2})$. They are computed in each
of the following six subsections, respectively. We will prepare some useful intermediate results in the rest of this section.

From the expansions for $\psi(\hat{P})$, we have that
$$\psi(\hat{P})-\psi(P_{0})=\hat{E}IF_1+\frac{1}{2}\hat{E}IF_2+\frac{1}{6}\hat{E}IF_3+n^{-2}R_0$$
%\[
%\hat{E}IF(X;\hat{P})^{2}=\hat{E}\left(\begin{array}{c}
%IF_1+\hat{E}_{2}IF_2+\frac{1}{2}\hat{E}_{2}\hat{E}_3IF_3
%-\hat{E}IF_1-\hat{E}IF_2-\frac{1}{2}\hat{E}IF_3
%\end{array}\right)^{2}+O_{p}(n^{-3/2})
%\]

Next, we derive the expansion for $\frac{1}{\hat{C}_{1}}=\frac{1}{\widehat{Var}[IF(X;\hat{P})]^{-1/2}}$. Using the notation introduced in this section, the expansion for $IF_1$ introduced in Theorem \ref{thm: Bartlett_full} can be written as 

\begin{equation} \label{eq: IF_1_expansion}
IF_1(X_1;\hat{P})= IF_1-\hat{E}IF_1+\hat{E}_{2}IF_2-\hat{E}_1\hat{E}_2IF_2+\frac{1}{2}\hat{E}_{2}\hat{E}_3IF_3-\frac{1}{2}\hat{E}IF_3+n^{-3/2}R_1
\end{equation}
Therefore, its variance under $\hat{P}$ can be expanded as follows %(note that $\hat{E}\left(\frac{1}{2}\hat{E}_{2}\hat{E}_3IF_3-\hat{E}_1\hat{E}_2IF_2-\frac{1}{2}\hat{E}IF_3\right)^2=O_p(n^{-2})$ and is of order $O(n^{-1})$, its square and its multiplication with $\hat{E}IF_1+\hat{E}_2IF_2$ are both of order $O(n^{-3/2})$)

\begin{align*}
 & \widehat{Var}[IF(X;\hat{P})]\\
= & \hat{E}_1\left[IF(X_1;\hat{P})\right]^{2}\\
= & \hat{E}_1\left[\left[IF_1-\hat{E}IF_1\right]^{2}+\left(\hat{E}_{2}IF_{2}-\hat{E}_{1}\hat{E}_{2}IF_{2}\right)^{2}\right]+\hat{E}_1\left[\begin{array}{c}
2\left(IF_1-\hat{E}IF_1\right)\left(\hat{E}_{2}IF_{2}-\hat{E}_{1}\hat{E}_{2}IF_{2}\right)\\
+\left(IF_1-\hat{E}IF_1\right)\left(\hat{E}_{2}\hat{E}_3IF_{3}-\hat{E}IF_3\right)
\end{array}\right]+O_p(n^{-3/2})\\
\end{align*}
Noting that $\hat{E}_1\left[(IF_1-\hat{E}IF_1)\hat{E}_1\hat{E}_2IF_2\right]=\hat{E}_1\left[(IF_1-\hat{E}IF_1)\right]\hat{E}_1\hat{E}_2IF_2=0$ and similarly $\hat{E}_1\left[(IF_1-\hat{E}IF_1)\hat{E}IF_3\right]=0$, $\hat{E}_1\left[\hat{E}_1\hat{E}_{2}IF_{2}\left(\hat{E}_{2}IF_{2}-\hat{E}_{1}\hat{E}_{2}IF_{2}\right)\right]=0$, the RHS above can be written as 
\begin{align*}
&\hat{E}_1\left[\left[IF_1-\hat{E}IF_1\right]^{2}+\left(\hat{E}_{2}IF_{2}\right)^{2}\right]+\hat{E}_1\left[\begin{array}{c}
2\left(IF_1-\hat{E}IF_1\right)\left(\hat{E}_{2}IF_{2}\right)\\
+\left(IF_1-\hat{E}IF_1\right)\left(\hat{E}_{2}\hat{E}_3IF_{3}\right)
\end{array}\right]+O_p(n^{-3/2})
\end{align*}
Note that $\hat{E}IF_1=O_p(n^{-1/2}),\hat{E}IF_2=O_p(n^-1),\hat{E}IF_3=O_p(n^{-3/2})$ implies that $(\hat{E}_1\hat{E}_2IF_2)^2+\hat{E}_1IF_1\hat{E}_1\hat{E}_2IF_2+\hat{E}_1IF_1\hat{E}_1\hat{E}_2\hat{E}_3IF_3=O_p(n^{-2})$, we have that the above is equal to 
\[
EIF_1^{2}+\hat{E}_1IF_1^{2}+\hat{E}_{1}\left(\hat{E}_2IF_{2}\right)^{2}+2\left(\hat{E}_{1}\left(IF_1\hat{E}_2IF_{2}\right)\right) +\left(\hat{E}_{1}\left(IF_1\hat{E}_2\hat{E}_3IF_{3}\right)\right)-\left(\hat{E}_1IF_1\right)^{2}
\]

Since marginal expectations are 0 (see the end of Section \ref{subsec: technical lemmas}), we have $E_2IF_2=0,E_2IF_2=0,E_2IF_3=0,E_3IF_3=0$. Therefore, the above can be written as (recall that $\Delta_{i}:=\hat{E}_{i}-E_{i}$)
\begin{align*} 
 & EIF_1^{2}+\Delta_1IF_1^{2}+\hat{E}_{1}\left[\left(\Delta_2IF_{2}\right)^{2}\right]+2\left(\hat{E}_{1}\left[IF_1\Delta_2IF_{2}\right]\right) +\left(\hat{E}_{1}\left[IF_1\Delta_2\Delta_3IF_{3}\right]\right)-\left(\Delta_1IF_1\right)^{2}\\
=: & EIF_1^{2}+I_{1}+I_{2}+O_{p}(n^{-3/2})
\end{align*}
Here 
\[
I_{1}:=\Delta_{1}IF_{1}^{2}+2\left(\hat{E}_{1}\left[\Delta_{2}\left(IF_{1}IF_{2}\right)\right]\right),
\]
and 
\[
I_{2}:=\hat{E}_{1}\left(\Delta_{2}IF_{2}\right)^{2}+\hat{E}_{1}\Delta_{2}\Delta_{3}IF_{1}IF_{3}-\left(\Delta_{1}IF_{1}\right)^{2}.
\]
Moreover, it follows from the moment condition introduced in Theorem \ref{thm: Bartlett_full} that the residual term can be written as $n^{-3/2}S$ where $ES^4=O(1)$. 

The purpose of introducing the notation with $\Delta$ is to remind us that the order of a term is $O_p(n^{-k/2})$ where $k$ is the number of occurrence of $\Delta$. In particular, we can check that $I_i=O_p(n^{-i/2})$. Taking $(\cdot)^{-1/2}$, we get (let $\kappa_{2}:=EIF(X;P_{0})^{2}$
and note that $\kappa_{2}>0$)
\begin{equation}
\widehat{Var}[IF(X;\hat{P})]^{-1/2}=\kappa_{2}^{-1/2}\left(1+\frac{I_{1}}{\kappa_{2}}+\frac{I_{2}}{\kappa_{2}}\right)^{-1/2}+O_{p}(n^{-3/2})=\kappa_{2}^{-1/2}\left(1-\frac{I_{1}}{2\kappa_{2}}-\frac{I_{2}}{2\kappa_{2}}+\frac{3}{8}\frac{I_{1}}{\kappa_{2}^{2}}\right)+O_{p}(n^{-3/2}).\label{eq: expand_var}
\end{equation}

From a Taylor expansion argument together with the moment condition in Theorem \ref{thm: Bartlett_full}, we can show that the residual above can also be written as $n^{-3/2}S_1$ where $ES_1^4<\infty$. Then, from \eqref{eq: expand_var} and the expansion for
$\psi$, using the notation with $\Delta$, $A_{n}$ could be written
as 
\begin{align}
A_{n} & =\sqrt{n}\left(\psi(\hat{P})-\psi(P_{0})\right)\kappa_{2}^{-1/2}\left(1-\frac{I_{1}}{2\kappa_{2}}-\frac{I_{2}}{2\kappa_{2}}+\frac{3}{8}\frac{I_{1}}{\kappa_{2}^{2}}\right)+O_{p}(n^{-3/2})\nonumber\\
 & =\sqrt{n}\kappa_{2}^{-1/2}\left(\Delta_{1}IF_{1}+\frac{1}{2}\Delta_{1}\Delta_{2}IF_{2}+\frac{1}{6}\Delta_{1}\Delta_{2}\Delta_{3}IF_{3}\right)\left(1-\frac{I_{1}}{2\kappa_{2}}-\frac{I_{2}}{2\kappa_{2}}+\frac{3}{8}\frac{I_{1}^{2}}{\kappa_{2}^{2}}\right)+O_{p}(n^{-3/2})\label{eq: A_n_expression}
\end{align}
Here, the residual can be written as $n^{-3/2}S_2$ where $ES_2^4<\infty$. Hence, for computing the moments of $A_n$ with residual $O(n^{-3/2})$, the residual does not have contribution.

\subsubsection{First moment of \texorpdfstring{$A_{n}$}{TEXT}}

From \eqref{eq: A_n_expression} we have that
\begin{align*}
\frac{EA_{n}}{\sqrt{n}\kappa_{2}^{-1/2}} & =E\left[\Delta_{1}IF_{1}\left(-\frac{I_{1}}{2\kappa_{2}}-\frac{I_{2}}{2\kappa_{2}}+\frac{3}{8}\frac{I_{1}^{2}}{\kappa_{2}^{2}}\right)\right]+E\left[\frac{1}{2}\Delta_{1}\Delta_{2}IF_{2}\right]-\frac{1}{4\kappa_{2}}E[\Delta_{1}\Delta_{2}\left(IF_{2}\right)I_{1}]\\
 & +E\left[\frac{1}{6}\Delta_{1}\Delta_{2}\Delta_{3}IF_{3}\right]+O(n^{-2})
\end{align*}
We observe that, after taking expectation, the
$\Theta(n^{-3/2})$ part in the expansion will only contribute $O(n^{-2})$ (for example, we can check that $E\left(\frac{1}{6}\Delta_1\Delta_2\Delta_3IF_3\right)=O(n^{-2})$ instead of just $O(n^{-3/2})$). Therefore, we do not need to take these terms into consideration when computing $EA_n$. Hence
\begin{align*}
\frac{EA_{n}}{\sqrt{n}\kappa_{2}^{-1/2}} & =E\left[\Delta_{1}IF_{1}\left(-\frac{I_{1}}{2\kappa_{2}}\right)\right]+E\left[\frac{1}{2}\Delta_{1}\Delta_{2}IF_{2}\right]+O(n^{-2})\\
 & =-\frac{1}{2\kappa_{2}}E_{1}[\Delta_1IF_{1}\Delta_1IF_1^2]-\frac{1}{\kappa_{2}}E[\Delta_{1}IF_{1}\hat{E}_{1}\Delta_{2}\left(IF_{1}IF_{2}\right)]+E\left[\frac{1}{2}\Delta_{1}\Delta_{2}IF_{2}\right]+O(n^{-2})\\
 & =-\frac{1}{2\kappa_{2}}\frac{1}{n}E_{1}IF_{1}^{3}-\frac{1}{\kappa_{2}}\frac{1}{n}EIF_{1}(X_{1})IF_{1}(X_{2})IF_{2}(X_{1},X_{2})+\frac{1}{2n}E\left[IF_{2}(X,X)\right]+O(n^{-2}).
\end{align*}
To show the last equality, we show that $E[\Delta_{1}IF_{1}\hat{E}_{1}\Delta_{2}\left(IF_{1}IF_{2}\right)] = \frac{1}{n}EIF_{1}(X_{1})IF_{1}(X_{2})IF_{2}(X_{1},X_{2}) +O(n^{-2})$ as an example. Indeed, we have that 
\begin{align}
 & E\left[\Delta_{1}IF_{1}\hat{E}_{1}\Delta_{2}\left(IF_{1}IF_{2}\right)\right]\label{eq: computation_eg}\\
 & \text{(note that \ensuremath{EIF_{1}=0,E_{2}IF_{1}IF_{2}=0})}\nonumber \\
= & E\left[\frac{\sum_{i=1}^{n}IF_{1}(X_{i})}{n}\frac{\sum_{1\leq j,k\leq n}IF_{1}(X_{j})IF_{2}(X_{j},X_{k})}{n^{2}}\right]\nonumber \\
= & \frac{1}{n^{3}}\sum_{1\leq i,j,k\leq n}E\left[IF_{1}(X_{i})IF_{1}(X_{j})IF_{2}(X_{j},X_{k})\right]\nonumber 
\end{align}
Since the marginal expectations of influence functions are 0, we have
that $EIF_{1}(X_{i})IF_{1}(X_{j})IF_{2}(X_{j},X_{k})\neq0$
only if $k=i$. Therefore, the RHS above is given by $\frac{1}{n}EIF_{1}(X_{1})IF_{1}(X_{2})IF_{2}(X_{1},X_{2})+O(n^{-2})$
as claimed. Here the residual is due to the terms with $i=j=k$, which
have a different expectation but only contribute $O(n^{-2})$ to the
summation.

Recall the notations: $\gamma=EIF_{1}^{3}$, $\mu_{2,c}:=EIF_{1}(X_{1})IF_{1}(X_{2})IF_{2}(X_{1},X_{2})$,
$\mu_{2,d}:=EIF_{2}(X,X)$, we get that
\[
EA_{n}=n^{-1/2}\left[-\frac{\kappa_{2}^{-3/2}}{2}\gamma-\kappa_{2}^{-3/2}\mu_{2,c}+\frac{1}{2}\kappa_{2}^{-1/2}\mu_{2,d}\right] + O(n^{-3/2})
\]

\subsubsection{Second moment of \texorpdfstring{$A_{n}$}{TEXT}}

Again, we start from \eqref{eq: A_n_expression}. Taking squares, we get that
\begin{align*}
A_{n}^{2}= & n\kappa_{2}^{-1}(\left(\Delta_{1}IF_{1}\right)^{2}+\Delta_{1}IF_{1}\Delta_{1}\Delta_{2}IF_{2}+\frac{1}{4}\left(\Delta_{1}\Delta_{2}IF_{2}\right)^{2}+\frac{1}{3}\Delta_{1}IF_{1}\Delta_{1}\Delta_{2}\Delta_{3}IF_{3})\left(1-\frac{I_{1}}{\kappa_{2}}-\frac{I_{2}}{\kappa_{2}}+\frac{I_{1}^{2}}{\kappa_{2}^{2}}\right)\\& +O_{p}(n^{-3/2})
\end{align*}
Hence plugging in $I_{1},I_{2}$, we get
\begin{align*}
\frac{1}{n\kappa_{2}^{-1}}EA_{n}^{2} & =E\left[\left(\Delta_{1}IF_{1}\right)^{2}+\Delta_{1}IF_{1}\Delta_{1}\Delta_{2}IF_{2}+\frac{1}{4}\left(\Delta_{1}\Delta_{2}IF_{2}\right)^{2}+\frac{1}{3}\Delta_{1}IF_{1}\Delta_{1}\Delta_{2}\Delta_{3}IF_{3}\right]\\
 & -\frac{1}{\kappa_{2}}E\left[\left(\Delta_{1}IF_{1}^{2}+2\left(\hat{E}_{1}\left(\Delta_{2}\left(IF_{1}IF_{2}\right)\right)\right)\right)\left[\left(\Delta_{1}IF_{1}\right)^{2}+\Delta_{1}IF_{1}\Delta_{1}\Delta_{2}IF_{2}\right]\right]\\
 & +\frac{1}{\kappa_{2}^{2}}E\left[\left(\begin{array}{c}
\left(\Delta_{1}IF_{1}^{2}+2\left(\hat{E}_{1}\left(\Delta_{2}\left(IF_{1}IF_{2}\right)\right)\right)\right)^{2}\\
-\left(\hat{E}_{1}\left(\Delta_{2}IF_{2}\right)^{2}+\hat{E}_{1}\Delta_{2}\Delta_{3}IF_{1}IF_{3}-\left(\Delta_{1}IF_{1}\right)^{2}\right)\kappa_{2}
\end{array}\right)\left[\left(\Delta_{1}IF_{1}\right)^{2}\right]\right]+O(n^{-5/2})
\end{align*}
We could compute the expectation of each term in the RHS as follows (each equation holds with residual $O(n^{-3})$, and the derivation for each of them is similar to \eqref{eq: computation_eg} ):
\[
E[(\Delta_{1}IF_{1})^{2}]=\frac{1}{n}\kappa_{2}
\]
\[
E[\Delta_{1}IF_{1}\Delta_{1}\Delta_{2}IF_{2}]=\frac{1}{n^{2}}E[IF_{1}(X)IF_{2}(X,X)]
\]
\begin{align*}
E[\left(\Delta_{1}\Delta_{2}IF_{2}\right)^{2}] & =\frac{2}{n^{2}}E\left[(IF_{2}(X,Y))^{2}\right]+\frac{1}{n^{2}}\left(EIF_{2}(X,X)\right)^{2}
\end{align*}
\[
E[\Delta_{1}IF_{1}\Delta_{1}\Delta_{2}\Delta_{3}IF_{3}]=\frac{3}{n^{2}}E[IF_{1}(X)IF_{3}(X,Y,Y)]
\]
\[
E[\Delta_{1}IF_{1}^{2}\left(\Delta_{1}IF_{1}\right)^{2}]=\frac{1}{n^{2}}E(IF^{4}(X))-\frac{1}{n^{2}}\left(E(IF^{2}(X))\right)^{2}
\]
\begin{align*}
E[\Delta_{1}IF_{1}^{2}\Delta_{1}IF_{1}\Delta_{1}\Delta_{2}IF_{2}] & =\frac{1}{n^{2}}EIF^{3}(X)EIF_{2}(Y,Y)\\
 & +2\frac{1}{n^{2}}EIF(Y)IF^{2}(X)IF_{2}(X,Y)
\end{align*}
\[
E[\hat{E}_{1}\left(\Delta_{2}\left(IF_{1}IF_{2}\right)\right)\left(\Delta_{1}IF_{1}\right)^{2}]=\frac{1}{n^{2}}EIF_{1}(X)IF_{2}(X,X)EIF_{1}^{2}(X)+\frac{3}{n^{2}}EIF_{1}^{2}(X)IF_{2}(Y)IF_{2}(X,Y)
\]
\begin{align*}
E[\left(\hat{E}_{1}\left(\Delta_{2}\left(IF_{1}IF_{2}\right)\right)\right)\Delta_{1}IF_{1}\Delta_{1}\Delta_{2}IF_{2}] & =\frac{1}{n^{2}}EIF_{1}(X)IF_{1}(Y)IF_{2}(X,Y)E(IF_{2}(Z,Z))\\
 & +\frac{2}{n^{2}}EIF_{1}(X)IF_{1}(Z)IF_{2}(X,Y)IF_{2}(Y,Z)
\end{align*}
\[
E[\Delta_{1}IF_{1}^{2}\Delta_{1}IF_{1}^{2}\left(\Delta_{1}IF_{1}\right)^{2}]=\frac{1}{n^{2}}\kappa_{2}E(IF_{1}^{2}(X)-\kappa_{2})^{2}+\frac{2}{n^{2}}\left(EIF_{1}^{3}(X)\right)^{2}
\]
\begin{align*}
E\left[\Delta_{1}IF_{1}^{2}\left(\hat{E}_{1}\left(\Delta_{2}\left(IF_{1}IF_{2}\right)\right)\right)\left(\Delta_{1}IF_{1}\right)^{2}\right] & =\frac{1}{n^{2}}\kappa_{2}E(IF_{1}^{2}(X)-\kappa_{2})(IF_{1}(Y)IF_{2}(Y,X))\\
 & +\frac{2}{n^{2}}EIF_{1}^{3}EIF_{1}(X)IF_{1}(Y)IF_{2}(X,Y)
\end{align*}
\begin{align*}
E\left[\left(\hat{E}_{1}\left(\Delta_{2}\left(IF_{1}IF_{2}\right)\right)\right)^{2}\left(\Delta_{1}IF_{1}\right)^{2}\right] & =\frac{1}{n^{2}}\kappa_{2}EIF_{1}(X)IF_{1}(Z)IF_{2}(X,Y)IF_{2}(Z,Y)\\
 & +\frac{2}{n^{2}}E\left(IF_{1}(X)IF_{1}(Y)IF_{2}(X,Y)\right)^{2}
\end{align*}
\[
E\left[\hat{E}_{1}\left(\Delta_{2}IF_{2}\right)^{2}\left(\Delta_{1}IF_{1}\right)^{2}\right]=\frac{1}{n^{2}}\kappa_{2}E(IF_{2}(X,Y))^{2}+\frac{2}{n^{2}}\mu_{2,a}
\]
\[
E\left[\hat{E}_{1}\Delta_{2}\Delta_{3}IF_{1}IF_{3}\left(\Delta_{1}IF_{1}\right)^{2}\right]=\frac{1}{n^{2}}\kappa_{2}E[IF_{1}(X)IF_{3}(X,Y,Y)]+\frac{2}{n^{2}}E[IF_{1}(X)IF_{1}(Y)IF_{1}(Z)IF_{3}(X,Y,Z)]
\]
\[
E[\left(\Delta_{1}IF_{1}\right)^{4}]=\frac{3}{n}\kappa_{2}^{2}
\]
Let

%$\mu_{(1,2),d}=EIF_{1}(X)IF_{2}(X,X),\mu_{2,2}=E[IF_{2}(X,Y)]^{2}$
\begin{equation}
\mu_{(1,3),d}:=EIF_{1}(X)\left(IF_{3}(X,Y,Y)-IF_{3}(X,Z,Y)\right)\label{eq: mu_defination}
\end{equation}
%\[
%\mu_{4}:=EIF_{1}^{4}
%\]
%\[
%\mu_{2,b}:=EIF_{1}(X)IF_{1}(Y)^{2}IF_{2}(X,Y)
%\]
%\[
%\mu_{2,a}:=EIF_{1}(X)IF_{1}(Z)IF_{2}(X,Y)IF_{2}(Y,Z)
%\]
\[
\mu_{2,c,2}=E\left(IF_{1}(X)IF_{1}(Y)IF_{2}(X,Y)\right)^{2}
\]
%\[
%\mu_{3,c}=EIF_{1}(X)IF_{1}(Y)IF_{1}(Z)IF_{3}(X,Y,Z)
%\]
Then putting the expectation of each term (computed above) together,
we would get the second moment %is given by $\kappa_{2}$
%plus $n^{-1}\kappa_{2}^{-1}$ times
%\begin{align*}
% & \mu_{(1,2),d}+\frac{1}{2}\mu_{2,2}+\frac{1}{4}\mu_{2,d}^{2}+\mu_{(1,3),d}-\frac{\mu_{4}}{\kappa_{2}}+\kappa_{2}-\frac{\gamma}{\kappa_{2}}\mu_{2,d}-\frac{2}{\kappa_{2}}\mu_{2,b}-2\mu_{(1,2),d}\\
% & -\frac{6}{\kappa_{2}}\mu_{2,b}-\frac{2}{\kappa_{2}}\mu_{2,c}\mu_{2,d}-\frac{4}{\kappa_{2}}\mu_{2,a}+\frac{\mu^{(4)}}{\kappa_{2}}-\kappa_{2}+\frac{2}{\kappa_{2}^{2}}\gamma^{2}+\frac{4}{\kappa_{2}}\mu_{2,b}+\frac{8\gamma}{\kappa_{2}^{2}}\mu_{2,c}+\frac{4}{\kappa_{2}}\mu_{2,a}+\frac{8}{\kappa_{2}^{2}}\mu_{2,c}^{2}\\
% & -\mu_{2,2}-2\mu_{2,a}\frac{1}{\kappa_{2}}-\mu_{(1,3),d}-\frac{2}{\kappa_{2}}\mu_{3,c}+3\kappa_{2}
%\end{align*}
%and this implies 
\begin{align*}
EA_{n}^{2}= & 1+\frac{1}{n}(-\frac{1}{2\kappa_{2}}\mu_{2,2}+\frac{1}{4\kappa_{2}}\mu_{2,d}^{2}-\frac{\gamma}{\kappa_{2}^{2}}\mu_{2,d}-\frac{4}{\kappa_{2}^{2}}\mu_{2,b}-\frac{1}{\kappa_{2}}\mu_{(1,2),d}-\frac{2}{\kappa_{2}^{2}}\mu_{2,c}\mu_{2,d}\\
 & +\frac{2}{\kappa_{2}^{3}}\gamma^{2}+\frac{8\gamma}{\kappa_{2}^{3}}\mu_{2,c}+\frac{8}{\kappa_{2}^{3}}\mu_{2,c}^{2}-2\mu_{2,a}\frac{1}{\kappa_{2}^{2}}\\
 & -\frac{2}{\kappa_{2}^{2}}\mu_{3,c}+3)+O(n^{-2}).
\end{align*}

\subsubsection{Third moment of \texorpdfstring{$A_{n}$}{TEXT}}

From \eqref{eq: A_n_expression} we get that
\[
A_{n}^{3}=n^{3/2}\kappa_{2}^{-3/2}(\left(\Delta_{1}IF_{1}\right)^{3}+\frac{3}{2}\left(\Delta_{1}IF_{1}\right)^{2}\Delta_{1}\Delta_{2}IF_{2})\left(1-\frac{3I_{1}}{2\kappa_{2}}\right)+O_{p}(n^{-1}).
\]
Hence
\begin{equation}
\frac{A_{n}^{3}}{n^{3/2}\kappa_{2}^{-3/2}}=(\left(\Delta_{1}IF_{1}\right)^{3}+\frac{3}{2}\left(\Delta_{1}IF_{1}\right)^{2}\Delta_{1}\Delta_{2}IF_{2})\left(1-\frac{3I_{1}}{2\kappa_{2}}\right)+O_{p}(n^{-5/2})\label{eq: 3rd_moment_expansion}
\end{equation}
Similar to the computation for the first moment, the expectation of the residual in the RHS is of order $O(n^{-3})$, because there is no $O(n^{-5/2})$ term after taking expectation.
We observe that the following holds with residual $O(n^{-3})$
\[
E\left(\Delta_{1}IF_{1}\right)^{3}=\frac{1}{n^{2}}EIF_{1}^{3}
\]
\[
E\left[\left(\Delta_{1}IF_{1}\right)^{3}\left(\Delta_{1}IF_{1}^{2}\right)\right]=\frac{3}{n^{2}}\kappa_{2}\gamma
\]
\[
E\left[\left(\Delta_{1}IF_{1}\right)^{3}\left(\hat{E}_{1}\left(\Delta_{2}\left(IF_{1}IF_{2}\right)\right)\right)\right]=\frac{3}{n^{2}}\kappa_{2}EIF_{1}(X)IF_{1}(Y)IF_{2}(X,Y)
\]
\[
E\left[\left(\Delta_{1}IF_{1}\right)^{2}\Delta_{1}\Delta_{2}IF_{2}\right]=\frac{1}{n^{2}}\kappa_{2}E[IF_{2}(X,X)]+\frac{2}{n^{2}}EIF_{1}(X)IF_{1}(Y)IF_{2}(X,Y)
\]
Therefore, taking expectation in \eqref{eq: 3rd_moment_expansion} and plugging
in the above, we get
\[
EA_{n}^{3}=\kappa_{2}^{-3/2}n^{-1/2}\left(-\frac{7}{2}\gamma-6\mu_{2,c}+\frac{3}{2}\kappa_{2}\mu_{2,d}\right)+O(n^{-3/2}).
\]

\subsubsection{Fourth moment of \texorpdfstring{$A_{n}$}{TEXT}}

From \eqref{eq: A_n_expression} we get
\begin{align*}
\frac{A_{n}^{4}}{n^{2}\kappa_{2}^{-2}}= & \left(\left(\Delta_{1}IF_{1}\right)^{4}+2\left(\Delta_{1}IF_{1}\right)^{3}\Delta_{1}\Delta_{2}IF_{2}+\frac{6}{4}\left(\Delta_{1}IF_{1}\right)^{2}\left(\Delta_{1}\Delta_{2}IF_{2}\right)^{2}+\frac{4}{6}\left(\Delta_{1}IF_{1}\right)^{3}\Delta_{1}\Delta_{2}\Delta_{3}IF_{3}\right)\\
 & \times\left(1-\frac{2I_{1}}{\kappa_{2}}-\frac{2I_{2}}{\kappa_{2}}+\frac{3I_{1}^{2}}{\kappa_{2}^{2}}\right).
\end{align*}
After plugging in $I_{1}$ and $I_{2}$, we can compute the expectation
of each term (all of them hold with residual $O(n^{4})$)
\[
E[(\Delta_{1}IF_{1})^{4}]=\frac{3\kappa_{2}^{2}}{n^{2}}+\frac{1}{n^{3}}(EIF_{1}^{4}-3\kappa_{2}^{2})
\]
\[
E[(\Delta_{1}IF_{1})^{6}]=\frac{15}{n^{3}}\kappa_{2}^{3}
\]
\[
E[\left(\Delta_{1}IF_{1}\right)^{3}\Delta_{1}\Delta_{2}IF_{2}]=\frac{3\kappa_{2}}{n^{2}}\mu_{1,2,d}+\frac{\gamma}{n^{3}}\mu_{2,d}+\frac{6}{n^{3}}\mu_{2,b}
\]
\begin{align*}
E[\left(\Delta_{1}IF_{1}\right)^{2}\left(\Delta_{1}\Delta_{2}IF_{2}\right)^{2}] & =\frac{2\kappa_{2}}{n^{3}}E\left[(IF_{2}(X,Y))^{2}\right]+\frac{\kappa_{2}}{n^{3}}\left([(EIF_{2}(X,X))]\right)^{2}\\
 & +\frac{4}{n^{3}}E[IF_{2}(X,X)]EIF_{1}(X)IF_{1}(Y)IF_{2}(X,Y)\\
 & +\frac{8}{n^{3}}E[IF_{1}(X)IF_{1}(Y)IF_{2}(X,Z)IF_{2}(Y,Z)]
\end{align*}
\[
E[\left(\Delta_{1}IF_{1}\right)^{3}\Delta_{1}\Delta_{2}\Delta_{3}IF_{3}]=\frac{9\kappa_{2}}{n^{3}}E[IF_{1}(X)IF_{3}(X,Y,Y)]+\frac{6}{n^{3}}EIF_{1}(X)IF_{1}(Y)IF_{1}(Z)IF_{3}(X,Y,Z)
\]
\[
E[\Delta_{1}IF_{1}^{2}\left(\Delta_{1}IF_{1}\right)^{4}]=6\kappa_{2}\left[\frac{1}{n^{2}}E(IF^{4}(X))-\frac{1}{n^{2}}\left(E(IF^{2}(X))\right)^{2}\right]+4\gamma^{2}
\]
\begin{align*}
E[\Delta_{1}IF_{1}^{2}\left(\Delta_{1}IF_{1}\right)^{3}\Delta_{1}\Delta_{2}IF_{2}] & =\frac{3\kappa_{2}}{n^{2}}EIF^{3}(X)E(IF_{2}(Y,Y))\\
 & +2\frac{3\kappa_{2}}{n^{2}}EIF(Y)IF^{2}(X)IF_{2}(X,Y)+6\gamma EIF_{1}(X)IF_{1}(Y)IF_{2}(X,Y)
\end{align*}
\begin{align*}
E[\hat{E}_{1}\left(\Delta_{2}\left(IF_{1}IF_{2}\right)\right)\left(\Delta_{1}IF_{1}\right)^{4}] & =\frac{3\kappa_{2}}{n^{3}}EIF_{1}(X)IF_{2}(X,X)EIF_{1}^{2}(X)+\frac{18\kappa_{2}}{n^{3}}EIF_{1}^{2}(X)IF_{1}(Y)IF_{2}(X,Y)\\
 & +\frac{4}{n^{3}}\gamma EIF_{1}(X)IF_{1}(Y)IF_{2}(X,Y)
\end{align*}
\begin{align*}
E[\left(\hat{E}_{1}\left(\Delta_{2}\left(IF_{1}IF_{2}\right)\right)\right)\left(\Delta_{1}IF_{1}\right)^{3}\Delta_{1}\Delta_{2}IF_{2}] & =\frac{3\kappa_{2}}{n^{3}}EIF_{1}(X)IF_{1}(Y)IF_{2}(X,Y)E(IF_{2}(Z,Z))\\
 & +\frac{6\kappa_{2}}{n^{3}}EIF_{1}(X)IF_{1}(Z)IF_{2}(X,Y)IF_{2}(Y,Z)\\
 & +\frac{6}{n^{3}}\left(EIF_{1}(X)IF_{1}(Y)IF_{2}(X,Y)\right)^{2}
\end{align*}
\[
E[\Delta_{1}IF_{1}^{2}\Delta_{1}IF_{1}^{2}\left(\Delta_{1}IF_{1}\right)^{4}]=\frac{3}{n^{3}}\kappa_{2}^{2}E(IF_{1}^{2}(X)-\kappa_{2})^{2}+\frac{12\kappa_{2}}{n^{3}}\left(EIF_{1}^{3}(X)\right)^{2}
\]
\begin{align*}
E\left[\Delta_{1}IF_{1}^{2}\left(\hat{E}_{1}\left(\Delta_{2}\left(IF_{1}IF_{2}\right)\right)\right)\left(\Delta_{1}IF_{1}\right)^{4}\right]= & \frac{3}{n^{3}}\kappa_{2}^{2}E(IF_{1}^{2}(X)-\kappa_{2})(IF_{1}(Y)IF_{2}(Y,X))\\
 & +\frac{12\kappa_{2}}{n^{3}}EIF_{1}^{3}EIF_{1}(X)IF_{1}(Y)IF_{2}(X,Y)
\end{align*}
\begin{align*}
E\left[\left(\hat{E}_{1}\left(\Delta_{2}\left(IF_{1}IF_{2}\right)\right)\right)^{2}\left(\Delta_{1}IF_{1}\right)^{4}\right] & =\frac{3}{n^{3}}\kappa_{2}^{2}EIF_{1}(X)IF_{1}(Z)IF_{2}(X,Y)IF_{2}(Z,Y)\\
 & +\frac{12\kappa_{2}}{n^{3}}E\left(IF_{1}(X)IF_{1}(Y)IF_{2}(X,Y)\right)^{2}
\end{align*}
\[
E\left[\hat{E}_{1}\left(\Delta_{2}IF_{2}\right)^{2}\left(\Delta_{1}IF_{1}\right)^{4}\right]=\frac{3}{n^{3}}\kappa_{2}^{2}E(IF_{2}(X,Y))^{2}+\frac{12\kappa_{2}}{n^{3}}\mu_{2,a}
\]
\[
E\left[\hat{E}_{1}\Delta_{2}\Delta_{3}IF_{1}IF_{3}\left(\Delta_{1}IF_{1}\right)^{4}\right]=\frac{3}{n^{2}}\kappa_{2}^{2}E[IF_{1}(X)IF_{3}(X,Y,Y)]+\frac{12\kappa_{2}}{n^{2}}E[IF_{1}(X)IF_{1}(Y)IF_{1}(Z)IF_{3}(X,Y,Z)]
\]
After plug them in, we will get %$\kappa_{2}^{-2}$ times
\begin{align*}
EA_{n}^{4}= & 3\kappa_{2}^{2}+\frac{1}{n\kappa_{2}^{2}}(\kappa_{4}+6\kappa_{2}\mu_{(1,2),d}+2\gamma\mu_{2,d}+12\mu_{2,b}+3\kappa_{2}\mu_{2,2}+\frac{3}{2}\kappa_{2}\mu_{2,d}^{2}+6\mu_{2,d}\mu_{2,c}\\
 & +12\mu_{2,a}+6\kappa_{2}\mu_{(1,3),d}+4\mu_{3,c}-12(\kappa_{4}+2\kappa_{2}^{2})-\frac{8}{\kappa_{2}}\gamma^{2}-12\gamma\mu_{2,d}\\
 & -24\mu_{2,b}-\frac{24\gamma}{\kappa_{2}}\mu_{2,c}-12\kappa_{2}(\mu_{1,2,d})-72\mu_{2,b}-\frac{16}{\kappa_{2}}\gamma\mu_{2,c}-24\mu_{2,c}\mu_{2,d}-48\mu_{2,a}\\
 & -\frac{48}{\kappa_{2}}\mu_{2,c}^{2}+9(\kappa_{4}+2\kappa_{2}^{2})+\frac{36}{\kappa_{2}}\gamma^{2}+36\mu_{2,b}+\frac{144}{\kappa_{2}}\gamma\mu_{2,c}+36\mu_{2,a}\\
 & +\frac{144}{\kappa_{2}}\mu_{2,c}^{2}-6\kappa_{2}\mu_{2,2}-24\mu_{2,a}-6\kappa_{2}\mu_{(1,3),d}-24\mu_{3,c}+30\kappa_{2}^{2} + O(n^{-2})
\end{align*}

\subsubsection{The expectation of \texorpdfstring{$A_{n}^{i}\frac{\hat{C}_{2}}{\hat{C}_{1}}$}{TEXT}}

When $i=2$ or $i=4$, with error $O(n^{-1})$, the expectation is given by $\frac{C_2}{C_1}EA^i$. Here, $C_{1},C_{2}$ are $\hat{C}_{1},\hat{C}_{2}$ with $\hat{E},\hat{P}$ replaced by $E,P_{0}$. 

Next, we compute the expectation of $A_{n}^{3}\frac{\hat{C}_{2}}{\hat{C}_{1}}$.
We write it as $A_{n}^{3}\frac{\hat{C}_{2}}{\hat{C}_{1}}=A_{n}^{3}\frac{C_{2}}{C_{1}}+A_{n}^{3}\left(\frac{\hat{C}_{2}}{\hat{C}_{1}}-\frac{C_{2}}{C_{1}}\right)$. For
the expectation of the first term, notice that $C_{1},C_{2}$ are
not random, it is just $\frac{C_{2}}{C_{1}}EA_{n}^{3}$ and we have
computed $EA_{n}^{3}$ before. For the second term, recall that 
\[
\hat{C}_{1}=\sqrt{\hat{\kappa}_{2}},\hat{C}_{2}=-\frac{1}{6}\frac{\phi^{\prime\prime\prime}(1)\hat{\gamma}}{\left(\phi^{\prime\prime}(1)\right)\hat{\kappa}_{2}}+\frac{1}{2}\frac{1}{\hat{\kappa}_{2}}\hat{\mu}_{2,c}.
\]
The expansion for $\hat{\kappa}_2^{-1/2}$ is given in \eqref{eq: expand_var}. Taking the third power, and only keeping the $O(n^{-1/2})$ part, we get 
\[
\hat{\kappa}_{2}^{-3/2}=\kappa_{2}^{-3/2}\left(1-\frac{3I_{1}}{2\kappa_{2}}\right)+O_{p}(n^{-1})=\kappa_{2}^{-3/2}\left(1-\frac{3}{2\kappa_{2}}\Delta_{1}IF_{1}^{2}-\frac{3}{\kappa_{2}}\hat{E}_{1}\Delta_{2}IF_{1}IF_{2}\right)+O_{p}(n^{-1})
\]
Similarly, from the expansion for $IF_1$ given in \eqref{eq: IF_1_expansion}, we have the expansion for $\hat{\gamma}$ given by (note that here we have fewer terms than \eqref{eq: IF_1_expansion} because we only want an expansion with residual $O_p(n^{-1})$)
\begin{align*}
\hat{\gamma} & =\hat{E}\left(IF_{1}+\hat{E}_{2}IF_{2}-\hat{E}IF_{1}\right)^{3}+O_{p}(n^{-1})\\
 & =\hat{E}_{1}\left(IF_{1}+\Delta_{2}IF_{2}-\Delta_{1}IF_{1}\right)^{3}+O_{p}(n^{-1})\\
 & =\hat{E}_{1}IF_{1}^{3}+3\hat{E}_{1}\left[IF_{1}^{2}\Delta_{2}IF_{2}\right]-3\hat{E}_{1}\left[IF_{1}^{2}\Delta_{1}IF_{1}\right]+O_{p}(n^{-1})\\
 & =\gamma+\Delta_{1}IF_{1}^{3}+3\hat{E}_{1}\left[IF_{1}^{2}\Delta_{2}IF_{2}\right]-3\hat{E}_{1}\left[IF_{1}^{2}\Delta_{1}IF_{1}\right]+O_{p}(n^{-1})
\end{align*}
Therefore, $E\left[A_{n}^{3}\left(-\frac{1}{6}\frac{\phi^{\prime\prime\prime}(1)\hat{\gamma}}{\left(\phi^{\prime\prime}(1)\right)\hat{\kappa}_{2}^{3/2}}+\frac{1}{6}\frac{\phi^{\prime\prime\prime}(1)\gamma}{\left(\phi^{\prime\prime}(1)\right)\kappa_{2}^{3/2}}\right)\right]$
is given by (note that $A_{n}=\sqrt{n}\Delta_{1}IF_{1}+O_{p}(n^{-1/2})$)
\begin{align*}
 & -\frac{1}{6}\frac{\phi^{\prime\prime\prime}(1)n^{3/2}}{\left(\phi^{\prime\prime}(1)\right)\kappa_{2}^{3/2}}E\left[\left(\Delta_{1}IF_{1}\right)^{3}\left(\frac{\hat{\gamma}}{\hat{\kappa}_{2}^{3/2}}-\frac{\gamma}{\kappa_{2}^{3/2}}\right)\right]+O(n^{-1})\\
= & -\frac{1}{6}\frac{\phi^{\prime\prime\prime}(1)n^{3/2}}{\left(\phi^{\prime\prime}(1)\right)\kappa_{2}^{3/2}}E\left[\left(\Delta_{1}IF_{1}\right)^{3}\left(\begin{array}{c}
\kappa_{2}^{-3/2}\left(\Delta_{1}IF_{1}^{3}+3\hat{E}_{1}\left[IF_{1}^{2}\Delta_{2}IF_{2}\right]-3\hat{E}_{1}\left[IF_{1}^{2}\Delta_{1}IF_{1}\right]\right)\\
-\gamma\left(\frac{3}{2\kappa_{2}}\Delta_{1}IF_{1}^{2}+\frac{3}{\kappa_{2}}\hat{E}_{1}\Delta_{2}IF_{1}IF_{2}\right)
\end{array}\right)\right]+O(n^{-1})
\end{align*}
It can be computed that
\[
E\left[\left(\Delta_{1}IF_{1}\right)^{3}\Delta_{1}IF_{1}^{3}\right]=\frac{3\kappa_{2}\mu_{4}}{n^{2}}+O(n^{-3})
\]
\[
E\left[\left(\Delta_{1}IF_{1}\right)^{3}\hat{E}_{1}\left[IF_{1}^{2}\Delta_{2}IF_{2}\right]\right]=\frac{3\kappa_{2}}{n^{2}}EIF_{1}(X)IF_{1}^{2}(Y)IF_{2}(X,Y)+O(n^{-3})=\frac{3\kappa_{2}}{n^{2}}\mu_{2,b}+O(n^{-3})
\]
\[
E\left[\left(\Delta_{1}IF_{1}\right)^{3}\hat{E}_{1}\left[IF_{1}^{2}\Delta_{1}IF_{1}\right]\right]=\frac{3\kappa_{2}^{2}}{n^{2}}+O(n^{-3})
\]
\[
E\left[\left(\Delta_{1}IF_{1}\right)^{3}\Delta_{1}IF_{1}^{2}\right]=\frac{3\kappa_{2}\gamma}{n^{2}}+O(n^{-3})
\]
\[
E\left[\left(\Delta_{1}IF_{1}\right)^{3}\hat{E}_{1}\Delta_{2}IF_{1}IF_{2}\right]=\frac{3\kappa_{2}}{n^{2}}E[IF_{1}(X)IF_{1}(Y)IF_{2}(X,Y)]=\frac{3\kappa_{2}}{n^{2}}\mu_{2,a}+O(n^{-3})
\]
Plugging this into the above, we get that 
\begin{equation}
E\left[A_{n}^{3}\left(-\frac{1}{6}\frac{\phi^{\prime\prime\prime}(1)\hat{\gamma}}{\left(\phi^{\prime\prime}(1)\right)\hat{\kappa}_{2}^{3/2}}+\frac{1}{6}\frac{\phi^{\prime\prime\prime}(1)\gamma}{\left(\phi^{\prime\prime}(1)\right)\kappa_{2}^{3/2}}\right)\right]=-\frac{\phi^{\prime\prime\prime}(1)}{2\phi^{\prime\prime}(1)n^{1/2}\kappa_{2}^{-3}}\left(\mu_{4}-3\kappa_{2}^{2}-\frac{3}{2\kappa_{2}}\left(\gamma^{2}+2\gamma\mu_{2,c}\right)+3\mu_{2,b}\right).\label{eq: gamma_result}
\end{equation}

For $\hat{\mu}_{2,c}$, it can be expanded as
\begin{align*}
\hat{\mu}_{2,c}= & \hat{E}\left[IF_{1}(X;\hat{P})IF_{1}(Y;\hat{P})IF_{2}(X,Y;\hat{P})\right]\\
= & \hat{E}\left[IF_{1}(X)IF_{1}(Y)IF_{2}(X,Y)\right]+\hat{E}\left[IF_{1}(X)IF_{1}(Y)\hat{E}_{Z}IF_{3}(X,Y,Z)\right]\\
 & +2\hat{E}\left[\left(\hat{E}_{Y}IF_{2}(X,Y)-\hat{E}IF_{1}(X)\right)IF_{1}(Y)IF_{2}(X,Y)\right]+O_p(n^{-1})\\
= & \mu_{2,c}+\left(\hat{E}-E\right)\left[IF_{1}(X)IF_{1}(Y)IF_{2}(X,Y)\right]+\hat{E}\left[IF_{1}(X)IF_{1}(Y)\hat{E}_{Z}IF_{3}(X,Y,Z)\right]\\
 & +2\hat{E}\left[\left(\hat{E}_{Y}IF_{2}(X,Y)-\hat{E}IF_{1}(X)\right)IF_{1}(Y)IF_{2}(X,Y)\right]
\end{align*}
Here, in the second equality, we used the expansion for $IF_2(X,Y;\hat{P})$ given in Theorem \ref{thm: Bartlett_full}, but we didn't keep terms like $\hat{E}_XIF_3(X,Y,Z)$ in that expansion because $$\hat{E}\left[IF_1(X)IF_1(Y)\hat{E}_{X}IF_{3}(X,Y,Z)\right]=\hat{E}[IF_1(X)]\hat{E}\left[IF_1(Y)\hat{E}_{Z}IF_{3}(X,Y,Z)\right]=O_p(n^{-1}).$$

Therefore, from the expansion for $\hat{\mu}_{2,c}$ and $\hat{\kappa}_2^{-3/2}$, we get
\begin{align}
 &EA_{n}^{3}\left(\frac{\hat{\mu}_{2,c}}{\hat{\kappa}_{2}^{3/2}}-\frac{\mu_{2,c}}{\kappa_{2}^{3/2}}\right)\nonumber \\= & n^{3/2}\kappa_{2}^{-3/2}E\left[\left(\Delta_{1}IF_{1}\right)^{3}\left(\begin{array}{c}
\kappa_{2}^{-3/2}\left(\begin{array}{c}
\left(\hat{E}-E\right)\left[IF_{1}(X)IF_{1}(Y)IF_{2}(X,Y)\right]\\
+\hat{E}\left[IF_{1}(X)IF_{1}(Y)\hat{E}_{Z}IF_{3}(X,Y,Z)\right]\\
+2\hat{E}\left[\left(\hat{E}_{Y}IF_{2}(X,Y)-\hat{E}IF_{1}(X)\right)IF_{1}(Y)IF_{2}(X,Y)\right]
\end{array}\right)\\
-\mu_{2,c}\left(\frac{3}{2\kappa_{2}}\Delta_{1}IF_{1}^{2}+\frac{3}{\kappa_{2}}\hat{E}_{1}\Delta_{2}IF_{1}IF_{2}\right)
\end{array}\right)\right]\label{eq: compute_mu_2c}
\end{align}
Note that we have 
\begin{align*}
 & E\left[\left(\Delta_{1}IF_{1}\right)^{3}\left(\hat{E}-E\right)\left[IF_{1}(X)IF_{1}(Y)IF_{2}(X,Y)\right]\right]\\
= & \frac{1}{n^{5}}\sum_{1\leq i,j,k,l,m\leq n}E\left[IF_{1}(X_{i})IF_{1}(X_{j})IF_{1}(X_{k})\left(IF_{1}(X_{l})IF_{1}(X_{m})IF_{2}(X_{l},X_{m})-\mu_{2,c}\right)\right]
\end{align*}
For each term in the summation, for the expectation to be nonzero, we must have $\{i,j,k\}\cap\{l,m\}\neq\emptyset$
and each of $i,j,k$ appears at least twice among $\{i,j,k,l,m\}$.
Suppose that $i=l$ is the pair that makes $\{i,j,k\}\cap\{l,m\}\neq\emptyset$.
Then we need the other pair to be $j=k$ (note that the number of
choices of $\{i,j,k,l,m\}$ with more than 2 pairs is of order $O(n^{2})$,
so these terms only contribute $O(n^{-3})$ to the above summation).
Clearly, there are $2\times3=6$ ways to choose the pair that makes
$\{i,j,k\}\cap\{l,m\}\neq\emptyset$. Therefore, the above is given
by
\[
\frac{6}{n^{2}}\kappa_{2}E\left[IF_{1}(X)\left(IF_{1}(X)IF_{1}(Y)IF_{2}(X,U)-\mu_{2,c}\right)\right]+O(n^{-3})=\frac{6}{n^{3}}\kappa_{2}\mu_{2,b}+O(n^{-3}).
\]
Similarly, 
\[
E\left[\left(\Delta_{1}IF_{1}\right)^{3}\hat{E}\left[IF_{1}(X)IF_{1}(Y)\hat{E}_{Z}IF_{3}(X,Y,Z)\right]\right]=\frac{3}{n^{2}}\kappa_{2}\mu_{3,c}+O(n^{-3})
\]
\[
E\left[\left(\Delta_{1}IF_{1}\right)^{3}\hat{E}\left[\left(\hat{E}_{Y}IF_{2}(X,Y)-\hat{E}IF_{1}(X)\right)IF_{1}(Y)IF_{2}(X,Y)\right]\right]=\frac{3\kappa_{2}\mu_{2,a}}{n^{2}}+O(n^{-3}).
\]
Plugging these (and some terms we have computed previously) into (\ref{eq: compute_mu_2c}),
we get that
\begin{equation}
\frac{1}{2}EA_{n}^{3}\left(\frac{\hat{\mu}_{2,c}}{\hat{\kappa}_{2}^{3/2}}-\frac{\mu_{2,c}}{\kappa_{2}^{3/2}}\right)=\frac{3}{2n^{1/2}}\kappa_{2}^{-3}\left(\mu_{3,c}+2\mu_{2,a}+2\mu_{2,b}-\frac{3}{2\kappa_{2}}\mu_{2,c}\left(\gamma+2\mu_{2,c}\right)\right).\label{eq: mu_2c_result}
\end{equation}
Summing up (\ref{eq: gamma_result}) and (\ref{eq: mu_2c_result}),
we get 
\begin{align*}
& EA_{n}^{3}\left(\frac{\hat{C}_{2}}{\hat{C}_{1}}-\frac{C_{2}}{C_{1}}\right) \\
= & \frac{3}{2n}\kappa_{2}^{-2}(-\frac{\phi^{\prime\prime\prime}(1)}{\phi^{\prime\prime}(1)}\frac{\mu_{4}-3\kappa_{2}^{2}}{3}+\frac{1}{2\kappa_{2}}\frac{\phi^{\prime\prime\prime}(1)}{\phi^{\prime\prime}(1)}(\gamma^{2}+2\gamma\mu_{2,c})-\frac{3}{2\kappa_{2}}\mu_{2,c}(\gamma+2\mu_{2,c})\\&+\left(2-\frac{\phi^{\prime\prime\prime}(1)}{\phi^{\prime\prime}(1)}\right)\mu_{2,b}+\mu_{3,c}+2\mu_{2,a}).
\end{align*}

Summing this with $\frac{C_2}{C_1}EA_n^3$, we get $EA_n^3\frac{\hat{C_2}}{\hat{C_1}}$. $EA_n^5\frac{\hat{C_2}}{
\hat{C_1}}$ can be computed in a similar way. Indeed, since the 5-th cumulant of $A_n$ is of order $O(n^{-3/2})$ (this follows from the general observation made in (2.20) of \cite{Hall1992} or Theorem 2.1 of \cite{Bhattacharya1983}), we have that $EA_n^5 = 10EA_n^3 -15EA_n+O(n^{-1})$. In addition, we can show that
\begin{align*}
& EA_{n}^{5}\left(\frac{\hat{C}_{2}}{\hat{C}_{1}}-\frac{C_{2}}{C_{1}}\right) \\
= & \frac{15}{2n}\kappa_{2}^{-2}(-\frac{\phi^{\prime\prime\prime}(1)}{\phi^{\prime\prime}(1)}\frac{\mu_{4}-3\kappa_{2}^{2}}{3}+\frac{1}{2\kappa_{2}}\frac{\phi^{\prime\prime\prime}(1)}{\phi^{\prime\prime}(1)}(\gamma^{2}+2\gamma\mu_{2,c})-\frac{3}{2\kappa_{2}}\mu_{2,c}(\gamma+2\mu_{2,c})\\&+\left(2-\frac{\phi^{\prime\prime\prime}(1)}{\phi^{\prime\prime}(1)}\right)\mu_{2,b}+\mu_{3,c}+2\mu_{2,a}).
\end{align*}
Actually, the above is exactly 5 times $EA_n^3\left(\frac{\hat{C}_{2}}{\hat{C}_{1}}-\frac{C_{2}}{C_{1}}\right)$. Summing the above with $\frac{C_2}{C_1}EA_n^5$, we get $EA_n^5\frac{\hat{C_2}}{\hat{C_1}}$.

\subsubsection{The expectation of \texorpdfstring{$\left(2\left(\frac{\hat{C}_{2}}{\hat{C}_{1}}\right)^{2}-\frac{\hat{C}_{3}}{\hat{C}_{1}}\right)A_{n}^{j}$}{TEXT}}

With an error of $O(n^{-1/2})$, it is given by $\left(2\left(\frac{C_{2}}{C_{1}}\right)^{2}-\frac{C_{3}}{C_{1}}\right)EA_{n}^{j}$. Actually, we only need the leading term of $EA_{n}^{j}$, which is $EA_{n}^{j}=O(n^{-1/2})$ when $j$ is odd, $EA_{n}^{4}=3+O(n^{-1})$, $EA_{n}^{6}=15+O(n^{-1})$ (this can be checked from the observation that $A_n$ is asymptotically normal). 

\subsection{Obtaining Edgeworth Expansion}

Finally, based on the moments computed in the previous sections, we
can get the cumulants of $W_{n}$ and thus the Edgeworth
expansion for $W_n$. These plug-in steps are implemented in Mathematica whose code can be found in \url{https://www.wolframcloud.com/obj/sh3972/Published/DRO_Bartlett.nb}. 

\section{Numerical Comparison on the Smooth Function Model}\label{sec:numerics smooth}

Consider $\psi(P)=f(E_PX_1,E_PX_2)$, where $f(x,y)=x+y^2$ and $(X_1,X_2)$ follows standard 2-dimensional normal distribution under $P$. Suppose we have $n$ samples drawn independently from $P$. We choose $\phi$ as the reversed KL divergence as in \cite{diciccio1991}. We compare among standard EL, our TB, and theoretical Bartlett using the formula in \cite{diciccio1991} (TB2). With a bit of algebra, we have that $t_1=t_2=t_4=0,t_3=3,t_5=-4$ where $t_i,i=1,\dots,5$ are defined in Corollary \ref{cor: function_of_mean}. In contrast, the result in Section 2.4 of \cite{diciccio1991} suggests that $t_5 = 12$. As a result, our Bartlett correction formula would choose $q=\chi^2_{1;1-\alpha}\left(1+\frac{1}{2n}\right)$, while the formula in \cite{diciccio1991} would choose $q=\chi^2_{1;1-\alpha}\left(1+\frac{9}{2n}\right)$. Table \ref{tab: numerical_sfm} describes the result.  

\begin{table}[H]
\centering
\caption{Coverage probabilities (with 95\% confidence interval) for the smooth function model}
\label{tab: numerical_sfm}
\small
\begin{tabular}{c|l l l}
\toprule

 Nominal Level       & 80\%   & 90\%      & 95\% \\
 \midrule
 EL, $n=30$     &$79.44\%\pm 0.25\%$  &$89.45\%\pm0.19\%$ & $94.45\%\pm 0.14\%$  \\
TB, $n=30$     & $79.80\%\pm0.25\%$ &$89.75\%\pm 0.19\%$ & $94.65\%\pm 0.14\%$  \\
TB2, $n=30$ & $82.54\%\pm 0.24\%$ &$91.72\%\pm 0.17\%$ & $95.88\%\pm 0.12\%$\\
 EL,$n=50$     &$79.92\%\pm 0.25\%$ &$89.78\%\pm 0.19\%$ &$94.89\%\pm 0.14\%$ \\
TB, $n=50$   &$80.17\%\pm 0.25\%$ & $89.94\%\pm 0.19\%$ &$95.01\%\pm 0.13\%$ \\
TB2, $n=50$   &$81.76\%\pm 0.24\%$ &$91.19\%\pm 0.18\%$  &$95.83\%\pm 0.12\%$    \\
%EL, $n=100$     &  & &$93.05\%\pm 0.16\%$   \\
%Estimated Bartlett, $n=100$     &  & &  $94.02\%\pm 0.15\%$ \\
%Theoretical Bartlett, $n=100$ & & & $94.28\%\pm 0.15\%$\\

\bottomrule
\end{tabular}
\end{table}

We can see that the estimated coverage probabilities of TB2 are significantly higher than the nominal level, while the estimated coverage probabilities of TB are close to the nominal level. For example, when $n=30$ and the nominal level is 80\%, the estimated coverage probability of TB2 is more than 2\% above the nominal level, while the estimated coverage probability of TB is only 0.2\% below the nominal level.

\end{document}